\documentclass[12pt,a4paper, oneside, bold,secthm,seceqn,amsthm,ussrhead,reqno]{article}
\usepackage[utf8]{inputenc}
\usepackage[english]{babel}
\usepackage[symbol]{footmisc}
\usepackage{amssymb,amsmath,amsthm,amsfonts,xcolor,enumerate,hyperref, comment,longtable, cleveref}
\usepackage{verbatim}
\usepackage{times}
\usepackage{cite}
\usepackage{pdflscape}
\usepackage{ulem}
\usepackage[mathcal]{euscript}
\usepackage{tikz}
\usepackage{hyperref}
\usepackage{cancel}
\usepackage{stmaryrd}
 
\usepackage{amsfonts}
\usepackage{amssymb}
\usepackage{times}
\usepackage{xcolor}
\usepackage{tkz-graph}
\usepackage{url}
\usepackage{float}
\usepackage{tasks}
\usepackage{array}
%\usepackage{refcheck}

%\DeclareMathAlphabet{\mathcal}{OMS}{cmsy}{m}{n}

%%%%%%%%%%%%%%%%%%%%%%%%%%%%%%
% Paquetes extra

\usepackage{cite}
\usepackage{hyperref}

%\DeclareMathAlphabet{\mathcal}{OMS}{cmsy}{m}{n}
 \usepackage{fancyhdr} 
\fancyhf{}
\cfoot{\thepage}
\pagestyle{fancy} 

%%%%%%%%%%%%%%%%%%%%%%%%%%%%%%%%%%%%%%%%%%%%%%%%%%%%%%%%%%%%%%%%%%%%%%%%%%%%%%%%%%%%%%%%%%%%%%%% %%%%%%%%%%%%%%%%%%%%%%%%%%%%%%%%%%%%%%%%%%%%%%%%%%%%%%%%%%%%%%%%%%%%%%%%%%%%%%%%%%%%%%%%%%%%%%%%%%%%%%%%%%%%%%%%%%%%%%%%%%%%%%%%%%%%%%%%%%%%%%%%%%%%%%%%%%%%%%%%%%%%%%%%%%%%%%%
\usepackage{amsfonts}
\usepackage{amsmath}
\usepackage{eurosym}
\usepackage{geometry}

\newcommand{\red}{\color{red}}
\newcommand{\blue}{\color{blue}}
\newcommand{\black}{\color{black}}

\usepackage{caption,booktabs}

\captionsetup{
  justification = centering
}

\setcounter{MaxMatrixCols}{12}

\geometry{left=0.3in,right=0.4in,top=0.4in,bottom=0.2in}

\geometry{left=1in,right=1in,top=1in,bottom=1in}

\theoremstyle{plain}
\newtheorem{theorem}{Theorem}
\newtheorem{lemma}[theorem]{Lemma}
\newtheorem{proposition}[theorem]{Proposition}
\newtheorem{remark}[theorem]{Remark}

\newtheorem{definition}[theorem]{Definition}
\newtheorem{corollary}[theorem]{Corollary}
\def\SAs{\mathrm{SAs}}
\def\CAs{\mathrm{CAs}}

\usetikzlibrary{arrows}

\sloppy
\usepackage{fouriernc}

\begin{document}
 
 \bigskip

\noindent{\Large
Shift associative   algebras}
 \footnote{
The    work is supported by 
FCT   UIDB/00212/2020 and UIDP/00212/2020;
by the Science Committee of the Ministry of Science and Higher Education of the Republic of Kazakhstan (Grant No. AP23489146).}

 \bigskip

\begin{center}

 {\bf
Hani Abdelwahab\footnote{Department of Mathematics, 
 Mansoura University,  Mansoura, Egypt; \ haniamar1985@gmail.com}, 
   Ivan Kaygorodov\footnote{CMA-UBI, University of  Beira Interior, Covilh\~{a}, Portugal; \    kaygorodov.ivan@gmail.com}   \&
   Bauyrzhan Sartayev\footnote{Narxoz University, Almaty, Kazakhstan; \    baurjai@gmail.com}  
}

\end{center}

\ 

\noindent {\bf Abstract:}
{\it  
We present a comprehensive study of algebras satisfying the identity  $(xy)z=y(zx),$ named as shift associative algebras.
Our research shows that these algebras are related to many interesting identities. In particular, they are related to anti-Poisson-Jordan algebras
and algebras of associative type $\sigma$.
We study algebras of associative type $\sigma$ to be Koszul and self-dual.
A basis of the free shift associative algebra generated by a countable set $X$  was constructed.
An analog of Wedderburn–Artin's theorem was established.
The algebraic and geometric classifications of complex $4$-dimensional shift associative algebras are given.
In particular, we proved that the first non-associative shift associative algebra appears only in dimension $5$.
 }

 \bigskip 

\noindent {\bf Keywords}:
{\it 
shift associative  algebra,
operads, 
free algebra,
algebraic classification,
geometric classification.}

\bigskip 

 \
 
\noindent {\bf MSC2020}:  
17A30 (primary);
17D25,
14L30 (secondary).

	 \bigskip

\ 

\

%%%%%%%%%%%%%%%%%%%%%%%%%%%%%%%%%%%%%%%%%%%%%%%%%%%%%%%%%%%%%%%%%%%%%

\tableofcontents 
\newpage
\section*{Introduction}

Let us say that an algebra has the associative type with $\sigma \in \mathbb{S}_3,$ if it (or his opposite algebra) satisfies an 
identity of the following type
\begin{center}
    $(x_{1}x_2)x_3=x_{\sigma(1)}(x_{\sigma(2)}x_{\sigma(3)})$ \ 
    or 
    \  $(x_{1}x_2)x_3=(x_{\sigma(1)}x_{\sigma(2)})x_{\sigma(3)}.$ \ 
\end{center}
We say that an algebra has the first (resp., second) type if it satisfies the first (resp., second) above-mentioned identity.

This class of algebras includes many important and famous varieties of algebras, which are under certain consideration
(see, for example, recently published papers  about Novikov algebras   by 
Dotsenko, Ismailov,  Sartayev, Umirbaev,  and others
\cite{diu,B};
about bicommutative algebras by 
Drensky, Ismailov,  Sartayev, Zhakhayev, and others \cite{dimz,IMS24};
about perm algebras by Kaygorodov, Mashurov, and Sartayev \cite{MS24,KM24}).
On the other hand, 
the study of algebras of associative type has a long history.
So, 
Widiger and Thedy considered algebras of the first associative type with $\sigma=(12)$ in \cite{W02, T67};
Felipe considered algebras of the second associative type with $\sigma=(12)$ in \cite{F11, FR11};
Kleinfeld considered algebras of the second associative type with $\sigma=(13)$ in \cite{K78}. 
There are some works about groupoids with identities of associative type \cite{JK,PB05}.
A more popular case of algebras of associative type is related to  $\sigma=(123).$ So, algebras of the second associative type 
with with $\sigma=(123)$ were considered in 
papers by Kleinfeld \cite{K95};
by Behn, Correa, and Hentzel in  \cite{B08,Behn}; 
Dhabalendu  and   Hentzel in \cite{DH};  
and by  Dhabalendu in  \cite{D23}.
In the end, various authors considered algebras of the first associative type 
with with $\sigma=(123)$
(Chen in \cite{CJJ};
Hentzel, Jacobs, and Muddan in \cite{Hentzel};
Widiger  in \cite{W02};
Dassoundo and Silvestrov in \cite{Sergei};
Barreiro, Benayadi, and Rizzo in \cite{BBR}).
In our work, we continue a study of algebras of the first associative type
type  with with $\sigma=(123),$ which we called {shift associative  algebras}, and algebras of his associative subvariety, which we called 
cyclic associative algebras.

\begin{definition}
An algebra is defined to be shift associative algebra\footnote{This type of algebras was introduced under the name of {\it nearly associative} algebras in \cite{BBR,Sergei}. 
Due to various types of algebras named as nearly associative algebras \cite{BG96,K86,sh58}, which have no relation to 
nearly associative algebras introduced in  \cite{BBR,Sergei}, 
    and especially due to a very famous book about  nearly associative algebras \cite{sh82},  which have no relation to 
nearly associative algebras introduced in  \cite{BBR,Sergei},
we have decided to call it by another proper name.} in case the following
identity holds:%
\begin{equation}
(x y)  z \ =\ y ( zx).  \label{nearly_eq}
\end{equation}
\end{definition}

\begin{definition}
An algebra is defined to be  {cyclic associative  algebra} in case the following
identities hold:%
\begin{equation*}
(x y)  z \ =\  x(yz)\ = \ y(zx).  \label{cycl}
\end{equation*}
\end{definition}

It is clear that 
any commutative associative algebra is a shift associative algebra; 
any commutative shift associative algebra is associative;
and any cyclic associative algebra is shift associative.

\ 

\noindent
{\bf Notations.}
We do not provide some well-known definitions
(such as definitions of Lie algebras,
Jordan algebras, nilpotent algebras, solvable algebras,  idempotents, etc) and refer the readers to consult previously published papers (for example, \cite{BBR,Sergei,Albert,sh82}). 
For the commutator, anticommutator, and associator,  
we will use the standard notations:

\begin{center}
    $[x,y] : = \frac  12(xy-yx),$   \  
    $x \circ y : = \frac  12 (xy+yx)$ \ and \ 
    $(x,y,z):=(xy)z-x(yz).$
\end{center}

%\begin{lemma}
%Let $\mathcal{A}$ be a shift associative algebra. Then $\mathcal{A}$ satisfies the following identity
%\begin{equation}
%(x  y)  \left( z  t\right) =\left( y  x\right)   \left(
%t  z\right) .  \label{swap}
%\end{equation}
%\end{lemma}

%\begin{proof}
%Observe that for $x,y,z,t\in \mathcal{A}$, by applying $\left( \ref%
%{nearly_eq}\right) $ in different ways we obtain 
%\begin{center}
 %   $(x  y)  \left(
%z  t\right) =$\ $(t  (x  y))  z=$\ $((y  t)  x) 
%z=x  \left( z  (y  t)\right) =x  \left( \left( t 
%z\right)   y\right) =\left( y  x\right)   \left( t  z\right) 
%$.
%\end{center}
%So, we have our statement. 
%\end{proof}

\newpage

\section{Identities related to shift associative  algebras}

Throughout this section, we assume that our
field has a characteristic different of $2$. 
The next proposition gives a generalization of 
\cite[Proposition 3.7 and Corollary 3.11]{BBR}.

\begin{proposition}
    Let $\mathcal{A}$ be an associative algebra of associative type with 
    $\sigma_1=(1i)$ and $\sigma_2=(j3),$ such that  
    $i\neq1, j\neq 3$  and 
    $\sigma_1 \neq \sigma_2;$
    or an associative algebra of associative type with $\sigma=(132).$     
    Then $\mathcal{A}$ is a cyclic associative algebra.  

\end{proposition}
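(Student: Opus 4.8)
The plan is to reduce both hypotheses to a single family of ternary monomial identities and then argue group-theoretically inside $\mathbb{S}_3$. Since $\mathcal{A}$ is associative, a product of three elements is unambiguous and I write it as $x_1x_2x_3$; consequently, for any $\rho\in\mathbb{S}_3$, both the first-type and the second-type identity associated with $\rho$ collapse to the single identity
\[
(\ast_\rho):\qquad x_1x_2x_3 \;=\; x_{\rho(1)}x_{\rho(2)}x_{\rho(3)}.
\]
Thus, for an associative $\mathcal{A}$, being of associative type with $\rho$ means exactly that $(\ast_\rho)$ holds in $\mathcal{A}$, up to replacing $\rho$ by its conjugate $(13)\rho(13)$ when the identity is realized in $\mathcal{A}^{\mathrm{op}}$ rather than in $\mathcal{A}$ — a conjugation that does not change the cycle type and hence is harmless below.

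Next I would record that
\[
G\;:=\;\{\rho\in\mathbb{S}_3 : (\ast_\rho)\text{ holds in }\mathcal{A}\}
\]
is a \emph{subgroup} of $\mathbb{S}_3$: the identity permutation clearly lies in $G$; if $(\ast_\rho)$ holds, substituting $x_i\mapsto x_{\rho^{-1}(i)}$ gives $(\ast_{\rho^{-1}})$; and if $(\ast_\rho)$ and $(\ast_\tau)$ both hold, applying $(\ast_\rho)$ to the triple $x_{\tau(1)},x_{\tau(2)},x_{\tau(3)}$ and chaining with $(\ast_\tau)$ yields $(\ast_{\tau\rho})$. Now the two scenarios are dispatched by elementary group theory. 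In the second scenario $\sigma=(132)$ has order $3$, so $G$ contains the cyclic subgroup $\{\mathrm{id},(123),(132)\}$; in particular $(123)\in G$. In the first scenario the conditions $i\neq1$ and $j\neq3$ say precisely that $\sigma_1$ and $\sigma_2$ are genuine transpositions, and $\sigma_1\neq\sigma_2$ (tracked through the flavor/opposite bookkeeping) gives us two \emph{distinct} transpositions in $G$; since any two distinct transpositions generate $\mathbb{S}_3$, we get $G=\mathbb{S}_3$, and again $(123)\in G$.

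Finally, $(\ast_{(123)})$ is exactly $x_1x_2x_3=x_2x_3x_1$; renaming $x_1,x_2,x_3$ as $x,y,z$ and using associativity of $\mathcal{A}$ to reinsert parentheses, this reads $(xy)z=x(yz)=y(zx)$, which is the definition of a cyclic associative algebra. The only genuinely delicate point is the first paragraph: one must verify that across the combinations of first/second type and of $\mathcal{A}$ versus $\mathcal{A}^{\mathrm{op}}$ the permutation handed to us stays within the conjugacy class of $\sigma_i$ (so, a transposition), so that the hypotheses $i\neq1$, $j\neq3$, $\sigma_1\neq\sigma_2$ really do produce two distinct transpositions; once this is in place, the remainder of the argument is automatic.
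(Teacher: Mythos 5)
Your argument is the paper's argument with the scaffolding written out: the paper's entire proof is the single line of products $(132)(132)=(12)(13)=(23)(12)=(13)(23)=(123)$ in $\mathbb{S}_3$, which is precisely your observation that the set $G=\{\rho : (\ast_\rho)\ \text{holds}\}$ is closed under composition and contains $(123)$ in each scenario. The reduction of both bracketing types to $(\ast_\rho)$ via associativity, and the identification of $(\ast_{(123)})$ with cyclic associativity, are correct and are implicit in the paper's one-line computation.

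The one place you do not close the argument is exactly the point you flag at the end, and your proposed fix is not the right one. You say it suffices to check that the permutation handed to you ``stays within the conjugacy class of $\sigma_i$''; but being a transposition is not what can fail --- distinctness is, and conjugation by $(13)$ can map one of your two transpositions onto the other. Concretely, if $\sigma_1=(12)$ is realized in $\mathcal{A}^{\mathrm{op}}$ while $\sigma_2=(23)$ is realized in $\mathcal{A}$, both contribute the same element $(23)$ to $G$, so $G$ may be just $\{\mathrm{id},(23)\}$; and an associative algebra satisfying only $x_1x_2x_3=x_1x_3x_2$ (equivalently $\mathcal{A}\cdot[\mathcal{A},\mathcal{A}]=0$) need not be cyclic associative --- in the free such algebra on $a,b$ the degree-three relations are spanned by $aab-aba$ and $bab-bba$, so $aab\neq baa$ there. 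Hence under the literal definition of associative type (with the opposite-algebra clause) the first scenario cannot be repaired by bookkeeping alone. The paper's proof, like yours, tacitly reads the hypothesis as both identities holding in $\mathcal{A}$ itself; under that reading your proof is complete, and the deferred verification is vacuous.
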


\begin{proof}
    The statement follows from the relations on $\mathbb S_3$ given below:
    \begin{center}
    $(132)(132)\ =\ (12)(13)\ =\ (23)(12)\ =\ (13)(23) \ =\ (123).$ 
    \end{center}
\end{proof}

\begin{proposition}[see, \cite{BBR}]
    Let $\mathcal{A}$ be a shift associative algebra. Then $\mathcal{A}$ is satisfying the following identities

\begin{equation}
(x  y)  \left( z  t\right) \ =\ \left( y  x\right)   \left(
t  z\right);  \label{swap}\\
\end{equation}
\begin{equation}
z \circ [x, y]  + y \circ  [x, z] \  =\ [z,x \circ y]+ [y,x \circ z].\label{blabla}\\
\end{equation}

\end{proposition}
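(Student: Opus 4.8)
The plan is to derive both identities directly from the defining identity $(xy)z = y(zx)$ by judicious substitutions and by iterating the identity, working over a field of characteristic $\neq 2$ as assumed in this section.

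First I would prove \eqref{swap}. The key observation is that the shift identity lets one rotate a product of the form $(\text{pair})\cdot z$: applying \eqref{nearly_eq} with $x \mapsto xy$ (treating $xy$ as a single element), $y \mapsto z$, $z \mapsto t$ is not quite what is needed; instead I would apply \eqref{nearly_eq} to the left-hand side $(xy)(zt)$ by viewing it as $(ab)c$ with $a = x$, $b = y$, $c = zt$, obtaining $(xy)(zt) = y\big((zt)x\big)$. Now $(zt)x$ is again of the form $(\text{pair})\cdot x$, so \eqref{nearly_eq} gives $(zt)x = t(xz)$, hence $(xy)(zt) = y\big(t(xz)\big)$. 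Applying the same two steps to the right-hand side $(yx)(tz)$ yields $(yx)(tz) = x\big(z(yt)\big)$. So \eqref{swap} reduces to showing $y\big(t(xz)\big) = x\big(z(yt)\big)$; I would massage one side into the other using one more application of \eqref{nearly_eq} read backwards (i.e. $a(bc)$... but \eqref{nearly_eq} as stated rewrites $(ab)c$, so I would instead re-expand $y(t(xz))$ by first rewriting $t(xz)$: note $xz = \text{(not a left-normed product)}$, so the cleaner route is: starting from $(xy)(zt)$ apply \eqref{nearly_eq} the \emph{other} way, writing $(xy)(zt) = \big((xy)z\big)t'$? That is illegal. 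The robust path: from $(xy)(zt) = y((zt)x) = y(t(xz))$ and, symmetrically, $(xy)(zt)$ with the roles of the two pairs swapped in the bracketing — actually both pairs are determined — so I would instead compute $(yx)(tz)$ independently as above to get $x(z(yt))$, and then verify $y(t(xz)) = x(z(yt))$ by applying \eqref{nearly_eq} to each as a product $(\cdot\,)(\cdot)$ once more until both collapse to the same fully right-nested word, e.g. both equal $z((yt)x)$ after a further rotation. This bookkeeping is the main obstacle: one must track that the shift identity, applied repeatedly, sends every parenthesization of a 4-letter monomial into a canonical one, and that \eqref{swap} is exactly the statement that the two cyclic-shift orbits coincide.

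Second, for \eqref{blabla} I would expand everything in terms of products using $[a,b] = \tfrac12(ab-ba)$ and $a\circ b = \tfrac12(ab+ba)$, clearing the common factor $\tfrac14$. The left-hand side becomes a combination of the eight monomials $z(xy), z(yx), (xy)z, (yx)z$ and $y(xz), y(zx), (xz)y, (zx)y$; the right-hand side a combination of $(zx)\!\cdot\!(xy)$-type... more precisely of $z\,xy$-words and $x\circ y$-words paired via commutators. Using \eqref{nearly_eq} to rewrite each left-normed triple $(ab)c$ as $b(ca)$, every monomial can be brought to right-nested form $u(vw)$, and then the identity should reduce to a linear relation among right-nested monomials that is either trivially true or follows from one more application of \eqref{nearly_eq} together with \eqref{swap} (for the degree-4-looking terms, though here everything is degree 3, so \eqref{swap} is not needed and only \eqref{nearly_eq} plus commutativity of scalars is used). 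I expect the bulk of the work to be this expansion; the only subtlety is sign-tracking, and the characteristic $\neq 2$ hypothesis is used solely to divide by the powers of $2$ coming from the definitions of $[\,,\,]$ and $\circ$. I would present \eqref{blabla} as a corollary of \eqref{swap} if a slicker derivation presents itself — linearizing \eqref{swap} or specializing $t = x$ in \eqref{swap} looks promising and may produce \eqref{blabla} after symmetrization — but the direct monomial expansion is the safe fallback.
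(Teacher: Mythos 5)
Your overall strategy is sound, and in fact the paper offers no proof of this proposition at all (it simply cites \cite{BBR}), so there is no ``paper proof'' to diverge from; both identities do follow by direct manipulation of \eqref{nearly_eq} exactly as you intend. Your treatment of \eqref{blabla} is correct and complete in outline: after clearing the factor $\tfrac14$, the difference of the two sides is $\tfrac12\bigl((xy)z-z(yx)+(xz)y-y(zx)\bigr)$, and the two applications $(xy)z=y(zx)$ and $(xz)y=z(yx)$ of \eqref{nearly_eq} kill it; no appeal to \eqref{swap} is needed, and characteristic $\neq 2$ enters only through the definitions of $[\cdot,\cdot]$ and $\circ$.

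For \eqref{swap}, however, your write-up stops short of closing the argument, and the one concrete landing point you offer is wrong: $z((yt)x)=z(t(xy))=(xz)(yt)$, which is \emph{not} equal to $(xy)(zt)$ in general (the algebra is only $5$-nice, so degree-$4$ monomials do not all coincide, and $y(t(xz))$ and $z(t(xy))$ are independent modulo the relations). The gap is easily filled, and with exactly the backward application of \eqref{nearly_eq} you gesture at: from your correct computation $(xy)(zt)=y\bigl((zt)x\bigr)=y\bigl(t(xz)\bigr)$, read \eqref{nearly_eq} right-to-left with $b=y$, $c=t$, $a=xz$ to get $y\bigl(t(xz)\bigr)=\bigl((xz)y\bigr)t$, then apply \eqref{nearly_eq} twice more forward: $\bigl((xz)y\bigr)t=\bigl(z(yx)\bigr)t=(yx)(tz)$. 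This closes the chain $(xy)(zt)=(yx)(tz)$ in five applications of the defining identity, without ever needing to identify a common canonical form for the two sides. So: approach correct, \eqref{blabla} fine, \eqref{swap} missing its last three (easy) steps and containing one false intermediate claim.
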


\begin{definition}
An algebra $({\rm P}, \cdot, \{\cdot,\cdot\})$ is defined to be  an {anti-Poisson algebra}\footnote{
A similar notion of anti-Poisson algebras can be found in \cite{R22},
where an algebra $({\rm P}, \cdot, \{\cdot,\cdot\})$
is called an anti-Poisson algebra, if 
$({\rm P}, \cdot)$ is mock-Lie, 
$({\rm P}, \{\cdot,\cdot\})$ is anticommutative and it satisfies (\ref{antipois}).
} (resp., anti-Poisson-Jordan  algebra), 
if $({\rm P}, \cdot)$ is a  commutative associative  (resp., Jordan) algebra,
$({\rm P},  \{\cdot,\cdot\})$ is a Lie algebra and the following
identity holds:%
\begin{equation}\label{antipois}
\{x  \cdot y, z \} = -\big(x   \cdot  \{ y,z \} +\{x,z\}  \cdot  y\big).
\end{equation}

\end{definition}

\begin{theorem}
\label{Jor-admiss}
Let $\mathcal{A}$ be a shift associative algebra. Then $(\mathcal{A}, \circ, [\cdot,\cdot])$ is an  anti-Poisson-Jordan algebra satisfying 
\begin{equation}
[x, y \circ z] + [z,x \circ y]+ [y,x \circ z]=0. \label{tipapois}
\end{equation}
$(\mathcal{A}, \circ, [\cdot,\cdot])$ is an
anti-Poisson algebra if and only if $\mathcal{A}$ satisfies the identity $[[x,y],z]=0.$
\end{theorem}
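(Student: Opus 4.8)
The plan is to verify, one axiom at a time, that $(\mathcal{A},\circ,[\cdot,\cdot])$ is an anti-Poisson-Jordan algebra, working throughout with the original product and eliminating monomials by repeated use of \eqref{nearly_eq}. Commutativity of $\circ$ and anticommutativity of $[\cdot,\cdot]$ are immediate from the definitions. The whole computation rests on the two equivalent forms of \eqref{nearly_eq}, namely $(xy)z=y(zx)$ and $x(yz)=(zx)y$, on the identity $((xy)z)t=(xz)(yt)$ obtained by applying \eqref{nearly_eq} three times, and on the identity \eqref{swap} recorded above.

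For the Lie axioms, only the Jacobi identity needs work: I would expand $[[x,y],z]+[[y,z],x]+[[z,x],y]$ into twelve monomials in the original product, use $x(yz)=(zx)y$ to bring the six right-normed ones to left-normed form, and check that the resulting twelve left-normed monomials cancel in six pairs. For the Jordan identity I would first note that $x\circ x$ is the original square $x^2$ and that $x$ commutes with $x^2$ in the original product (a one-line consequence of \eqref{nearly_eq}); then, expanding $(x^2\circ y)\circ x-x^2\circ(y\circ x)$ into eight monomials and reducing each of them, via the two forms of \eqref{nearly_eq}, the identity $((xy)z)t=(xz)(yt)$, and \eqref{swap}, to either $x^2(xy)$ or $(xy)x^2$, one finds that both sides equal $\tfrac12\bigl(x^2(xy)+(xy)x^2\bigr)$. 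This step carries the bulk of the computation and is the main obstacle; the remaining verifications are short.

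For the anti-Poisson identity \eqref{antipois} I would expand $[x\circ y,z]+x\circ[y,z]+[x,z]\circ y$ into twelve monomials, convert them all to left-normed form by $x(yz)=(zx)y$, and observe that they cancel in six pairs. The extra identity \eqref{tipapois} is then purely formal: applying \eqref{antipois} to each of $[y\circ z,x]$, $[x\circ y,z]$ and $[x\circ z,y]$ and using commutativity of $\circ$ together with anticommutativity of $[\cdot,\cdot]$ makes the sum telescope to $0$.

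It remains to treat the last assertion. Since the Lie axioms and \eqref{antipois} always hold, $(\mathcal{A},\circ,[\cdot,\cdot])$ is anti-Poisson exactly when $\circ$ is associative, so it suffices to decide when that happens. Expanding $(x\circ y)\circ z-x\circ(y\circ z)$ into eight monomials, passing to left-normed form, and performing the obvious cancellations leaves $(xy)z+(xz)y-(zx)y-(zy)x$; rewriting $(xy)z=y(zx)$ and $(zy)x=y(xz)$ and regrouping turns this expression into $wy-yw$ with $w:=xz-zx=2[x,z]$. Hence $\circ$ is associative if and only if every $[x,z]$ commutes with every $y$ in the original product, i.e.\ if and only if $[[x,y],z]=0$ holds identically, which is exactly the claim.
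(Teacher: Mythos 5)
Your proposal is correct, and the computations you sketch do go through as described: the twelve monomials in the Jacobi sum cancel in six pairs after left-normalization, and for the Jordan identity both $(x^{2}\circ y)\circ x$ and $x^{2}\circ(y\circ x)$ reduce, via \eqref{nearly_eq} and \eqref{swap}, to $\tfrac12\bigl(x^{2}(xy)+(xy)x^{2}\bigr)$ exactly as you claim. The difference from the paper is one of economy rather than substance. For the two admissibility statements the paper simply cites \cite[Proposition 2.5]{Sergei} (Lie) and \cite[Proposition 4.2]{BBR} (Jordan), so your direct verifications make the argument self-contained but reprove known facts; the Jordan step is indeed, as you anticipate, where the real computational weight sits. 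The verification of \eqref{antipois} and the final identity $(x\circ y)\circ z-x\circ(y\circ z)=[[x,z],y]$ (which yields both directions of the equivalence at once) are the same calculations as in the paper, only normalized to left-normed rather than right-normed monomials. The one genuinely different step is \eqref{tipapois}: the paper obtains it by combining \eqref{antipois} with the auxiliary identity \eqref{blabla}, whereas you observe that it is a purely formal consequence of \eqref{antipois} together with commutativity of $\circ$ and anticommutativity of $[\cdot,\cdot]$. Your route is cleaner, needs no further input from \eqref{nearly_eq}, and shows in passing that \eqref{tipapois} holds in every anti-Poisson-Jordan algebra, not only in those arising from shift associative algebras.
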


\begin{proof}
Thanks to  \cite[Proposition 2.5]{Sergei}, 
for each  shift   associative
algebra $\mathcal A$ we have that  $(\mathcal A, [\cdot, \cdot])$ is Lie.
Thanks to  \cite[Proposition 4.2]{BBR}, 
for each  shift   associative
algebra $\mathcal A$ we have that  $(\mathcal A, \circ)$ is Jordan.
 
Let us now check the relation (\ref{antipois}):

\begin{longtable}{rclcl}
$x \circ [y,z]$ &$=$&$\frac  14 \big(x(yz)-x(zy)+(yz)x-(zy)x \big)$&$=$&$\frac  14 \big(x(yz)- x(zy)+z(xy)-y(xz) \big)$,\\
$ y\circ [x,z]$ &$=$&$\frac  14 \big(y(xz)-y(zx)+(xz)y-(zx)y \big)$&$=$&$\frac  14 \big(y(xz)- y(zx)+z(yx)-x(yz) \big)$,\\
$  [x \circ y,z]$ &$=$&$ \frac  14\big((xy)z-z(xy)+(yx)z-z(yx) \big)$&$=$&$ \frac  14\big(y(zx)-z(xy)+x(zy)-z(yx) \big)$.\\
\end{longtable}
Summarizing these three relations above, we have that  $(\mathcal{A}, \circ, [\cdot,\cdot])$ satisfies   (\ref{antipois}) and hence, it is an anti-Poisson-Jordan algebra.
Due to (\ref{blabla}) and (\ref{antipois}) we obtain (\ref{tipapois}).

If $(\mathcal{A}, \circ, [\cdot,\cdot])$ is an anti-Poisson algebra, 
then

\begin{longtable}{rclclc}
$0$&$=$&$4 \big((x \circ y) \circ z- x \circ (y \circ z)\big) \ =$\\ 
&$=$&$ (xy)z+z(xy)+(yx)z+z(yx)-x(yz)-x(zy)-(yz)x-(zy )x$ &$=$ \\
&$=$&$ y(zx)+z(xy)+x(zy)+z(yx)-x(yz)-x(zy)-z(xy)-y(xz)$ &$=$ \\
&$=$&$  (xz)y-y(xz) +y(zx)-(zx)y \ = \ 4\ [[x,z],y].$  \\

\end{longtable}
\end{proof}

\begin{corollary}
    Let $\mathcal A$ be a cyclic associative algebra, 
    then $(\mathcal A, \circ, [\cdot, \cdot])$ is 
     anti-Poisson and  Poisson.
\end{corollary}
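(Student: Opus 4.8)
The plan is to reduce everything to the single observation that in a cyclic associative algebra the triple product is associative and cyclically invariant. Indeed, the identity $(xy)z=x(yz)$ makes $\mathcal A$ associative, so we may write $abc$ for the unambiguous product; the remaining identity $x(yz)=y(zx)$ then reads $abc=bca$ for all $a,b,c\in\mathcal A$. Two elementary consequences will carry the proof. First, $[uv,w]=\frac12\big(uvw-wuv\big)=0$ by cyclic invariance, and hence $[x\circ y,z]=\frac12\big([xy,z]+[yx,z]\big)=0$. Second, $w[u,v]=\frac12\big(wuv-wvu\big)=\frac12\big(uvw-vuw\big)=[u,v]w$, again by cyclic invariance, so $[[u,v],w]=\frac12\big([u,v]w-w[u,v]\big)=0$; that is, $\mathcal A$ satisfies the identity $[[x,y],z]=0$.

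Next I would invoke Theorem~\ref{Jor-admiss}. A cyclic associative algebra is shift associative, so that theorem immediately gives that $(\mathcal A,\circ,[\cdot,\cdot])$ is an anti-Poisson-Jordan algebra, and its ``if and only if'' clause, applied with the identity $[[x,y],z]=0$ just verified, upgrades this to: $(\mathcal A,\circ,[\cdot,\cdot])$ is an anti-Poisson algebra. In particular $(\mathcal A,\circ)$ is commutative associative, $(\mathcal A,[\cdot,\cdot])$ is a Lie algebra, and the anti-Leibniz rule $[x\circ y,z]=-\big(x\circ[y,z]+[x,z]\circ y\big)$ holds. This settles the anti-Poisson half of the claim.

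For the Poisson half, the only axiom still to be checked is the Leibniz rule $[x\circ y,z]=x\circ[y,z]+[x,z]\circ y$, the other two (commutative associative $\circ$ and Lie bracket) having just been recorded. But $[x\circ y,z]=0$ by the first step, and substituting this into the anti-Leibniz rule forces $x\circ[y,z]+[x,z]\circ y=0$ as well; hence both sides of the Leibniz rule vanish and it holds. Therefore $(\mathcal A,\circ,[\cdot,\cdot])$ is also a Poisson algebra. (If one prefers a self-contained argument, $x\circ[y,z]+[x,z]\circ y$ can be expanded as a $\frac14$-combination of the six orderings of $x,y,z$ and seen to collapse to $0$ by $abc=bca$; and $(\mathcal A,\circ)$ being commutative associative likewise follows from a direct six-term computation.)

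I do not expect a genuine obstacle: this is a corollary of Theorem~\ref{Jor-admiss}, and the real content is the cyclic-invariance observation plus two short computations. The only points requiring care are the bookkeeping of the six orderings of a ternary product when applying $abc=bca$, and matching the paper's conventions for ``Poisson'' and ``anti-Poisson'' (commutative associative $\circ$, Lie bracket, and the Leibniz resp.\ anti-Leibniz rule) --- which is precisely why routing the anti-Poisson part through Theorem~\ref{Jor-admiss} rather than redoing it by hand is the convenient choice.
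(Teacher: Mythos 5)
Your proposal is correct and follows the route the paper evidently intends: cyclic associativity gives $abc=bca$, hence $[[x,y],z]=0$ and $[x\circ y,z]=0$, so Theorem~\ref{Jor-admiss} yields the anti-Poisson structure, and the Leibniz rule holds degenerately since both of its sides vanish. The paper leaves this corollary without an explicit proof, and your argument supplies exactly the missing details.
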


\begin{corollary}
\label{idem}
    Any finite-dimensional shift associative algebra which is not a nilalgebra, contains an idempotent $e\neq 0$. 
\end{corollary}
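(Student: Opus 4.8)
The plan is to reduce the statement to the classical theorem of Albert that a finite-dimensional power-associative algebra which is not a nilalgebra contains a nonzero idempotent. The starting point is the trivial but useful remark that $e\circ e=\tfrac12(ee+ee)=ee$; hence a nonzero element $e$ is an idempotent of $\mathcal A$ if and only if it is an idempotent of the commutative algebra $(\mathcal A,\circ)$, which by Theorem~\ref{Jor-admiss} is a finite-dimensional Jordan algebra.

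The one point that needs care is that ``nilalgebra'' means the same thing for $\mathcal A$ and for $\mathcal A^{+}=(\mathcal A,\circ)$, and for this I would first check that shift associative algebras are power-associative. Putting $y=z=x$ in \eqref{nearly_eq} gives $(xx)x=x(xx)$, so $x^{3}$ is well defined; applying \eqref{nearly_eq} twice yields the auxiliary identity $(xy)(zt)=y\big(t(xz)\big)$, and specialising all four variables to $x$ one obtains $x^{2}x^{2}=x^{3}x=(x^{2}x)x$. By Albert's criterion the relations $x\cdot x^{2}=x^{2}\cdot x$ and $x^{2}\cdot x^{2}=(x^{2}\cdot x)\cdot x$ force power-associativity (this is also recorded in \cite{BBR,Sergei}). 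In particular the principal powers $a^{n}$ are unambiguous, and since $a\circ a^{n}=\tfrac12(aa^{n}+a^{n}a)=a^{n+1}$ they agree with the powers of $a$ computed in $(\mathcal A,\circ)$; consequently $\mathcal A$ is a nilalgebra precisely when $(\mathcal A,\circ)$ is.

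To finish, suppose $\mathcal A$ is not a nilalgebra. Then $(\mathcal A,\circ)$ is a finite-dimensional Jordan, hence power-associative, algebra which is not a nilalgebra, so by Albert's theorem it contains a nonzero idempotent $e$; by the first paragraph, $e$ is then a nonzero idempotent of $\mathcal A$. I do not anticipate a genuine obstacle: the only subtlety is the bookkeeping around the word ``nilalgebra'', which the power-associativity check disposes of, and one could equally well apply Albert's idempotent theorem to the power-associative algebra $\mathcal A$ directly, without passing through $(\mathcal A,\circ)$ at all.
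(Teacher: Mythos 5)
Your proposal is correct and follows the route the paper intends: Corollary~\ref{idem} is stated as an immediate consequence of Theorem~\ref{Jor-admiss} (so that $(\mathcal A,\circ)$ is a finite-dimensional Jordan, hence power-associative, algebra), combined with Albert's classical theorem producing a nonzero idempotent and the observation that $e\circ e=ee$. Your extra verification of power-associativity of $\mathcal A$ itself, which makes the word ``nilalgebra'' unambiguous, is exactly the content of the lemma the paper proves immediately after this corollary, so the two arguments coincide in substance.
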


\begin{lemma}
Let $\mathcal{A}$ be a shift associative algebra. Then $\mathcal{A}$ is
power-associative. Hence, it is a mono-associative algebra.
\end{lemma}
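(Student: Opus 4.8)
The plan is to reduce everything to the classical Albert criterion: over a field of characteristic $0$ (more generally, of characteristic $\neq 2,3,5$), an algebra is power-associative provided it satisfies the two identities $(x,x,x)=0$ and $(x^2,x,x)=0$, equivalently $x\, x^2 = x^2\, x$ and $x^2 x^2 = (x^2 x) x$; see \cite{Albert}. So the task splits into three parts: (i) verify these two low-degree identities for a shift associative algebra; (ii) quote the criterion to obtain power-associativity; (iii) observe that mono-associativity is then immediate, since the subalgebra generated by a single element consists of linear combinations of the well-defined powers $x^n$, among which associativity is just the exponent law $x^m x^n = x^{m+n}$.

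For (i), setting $x=y=z$ in \eqref{nearly_eq} gives $(xx)x = x(xx)$, i.e. $(x,x,x)=0$; in particular $x^3 := x^2 x = x\, x^2$ is unambiguous. Next substitute $(x,y,z)\mapsto (x,x^2,x)$ in \eqref{nearly_eq}: the left-hand side becomes $(x\cdot x^2)\cdot x$ and the right-hand side becomes $x^2\cdot (x\cdot x) = x^2 x^2$. Since $x\cdot x^2 = x^2\cdot x$ by the first step, this reads $(x^2 x)x = x^2 x^2$, that is, $(x^2,x,x)=0$. This is all the computation required; steps (ii) and (iii) are then formal.

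There is essentially no hard obstacle here — the genuine content is the single degree-four check $(x^2,x,x)=0$, and the Albert criterion packages the infinitude of higher powers for us. The only point to watch is the characteristic hypothesis: the Albert criterion is usually stated for $\mathrm{char}\neq 2,3,5$, while this section assumes only $\mathrm{char}\neq 2$ (over $\mathbb{C}$, the setting of the later classification, this is moot). If one insisted on a self-contained argument valid under the weaker hypothesis, one could instead lean on Theorem~\ref{Jor-admiss}: $(\mathcal{A},\circ)$ is a Jordan algebra, hence power-associative with respect to $\circ$, and the relations $x\circ x = x^2$, $x^2\circ x = x^2 x$ (using $(x,x,x)=0$) identify the $\circ$-powers of $x$ with its ordinary powers through low degrees; the remaining work is to show that the higher products $x^i x^j$ are independent of the association, which is precisely what the Albert reduction spares us. I would present the three-line argument above and relegate this alternative to a remark.
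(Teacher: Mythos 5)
Your proposal is correct and follows essentially the same route as the paper: both arguments reduce to Albert's criterion by verifying the two identities $(x,x,x)=0$ and $(x^2,x,x)=0$ and then citing \cite[Lemma 3]{Albert}. The only difference is that the paper obtains $(x^2,x,x)=0$ from the Jordan admissibility of $(\mathcal{A},\circ)$ (Theorem~\ref{Jor-admiss}), whereas you get it by the direct substitution $(x,y,z)\mapsto(x,x^2,x)$ into \eqref{nearly_eq} --- your version is, if anything, more self-contained, and your caveat about the characteristic hypothesis in Albert's lemma applies equally to the paper's own proof.
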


\begin{proof}
From (\ref{nearly_eq}), we have $\left( x,x,x\right) =0.$
Since $(\mathcal{A}, \circ)$ is a Jordan algebra [Theorem  \ref{Jor-admiss}], we have  $\left( x^{2},x,x\right) =0$. Then, it follows from \cite[Lemma 3]{Albert}  that $\mathcal{A}$\ is power-associative.
\end{proof}
\begin{lemma}
\label{unital}
Each unital shift associative algebra  $\mathcal{A}$  is
commutative associative.
\end{lemma}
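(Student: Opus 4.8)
The plan is to extract commutativity from the unit element and then deduce associativity by a purely formal manipulation of (\ref{nearly_eq}). First I would specialize the defining identity $(xy)z = y(zx)$ by setting $z = 1$: the left-hand side becomes $(xy)\cdot 1 = xy$, while the right-hand side becomes $y(1\cdot x) = yx$, so $xy = yx$ and $\mathcal{A}$ is commutative. (Setting $y = 1$ works just as well, giving $xz = zx$ directly.) This is the only step that genuinely uses the unit, and it is immediate.

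Once $\mathcal{A}$ is known to be commutative, associativity is automatic. Replacing $(x,y,z)$ by $(y,z,x)$ in (\ref{nearly_eq}) gives $(yz)x = z(xy)$; rewriting $(yz)x = x(yz)$ and $z(xy) = (xy)z$ by commutativity yields $x(yz) = (xy)z$, i.e. the associator vanishes identically. Alternatively, one may simply invoke the observation recorded right after the definitions that any commutative shift associative algebra is associative. Either way, $\mathcal{A}$ is commutative associative, as claimed.

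There is essentially no obstacle here: the statement is a one-line substitution followed by a short bookkeeping argument, and it does not even require the characteristic assumption of the section. The only point worth a moment's care is which variable to specialize to $1$ — putting $x = 1$ merely reduces (\ref{nearly_eq}) to a tautology, while the substitutions $y = 1$ and $z = 1$ are the ones that collapse it to commutativity.
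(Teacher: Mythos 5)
Your proposal is correct and follows the same route as the paper: substitute $z=1$ into $(xy)z=y(zx)$ to get $xy=yx$, then use that commutativity forces associativity (which you additionally verify by the cyclic substitution $(x,y,z)\mapsto(y,z,x)$, whereas the paper simply cites the observation made after the definitions). Nothing further is needed.
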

\begin{proof}
 Put $z=1$ in $\left( \ref{nearly_eq}\right),$ we have $xy=  yx$ for all $x,y \in \mathcal{A}$. So $\mathcal{A}$ is commutative. Since commutativity of shift associative algebra implies associativity, $\mathcal{A}$ is associative.
\end{proof}

\begin{corollary}
Let $\mathcal{A}$ be a shift associative algebra. If its {unital hull} $\widehat{\mathcal{A}} := \langle 1\rangle \oplus \mathcal{A}$ with product
$$(\alpha 1+x)   (\beta 1+y) := \alpha \beta 1 + (\alpha y + \beta x+x   y),$$ is also  a shift associative  algebra, then 
$\mathcal{A}$  is commutative associative.
\end{corollary}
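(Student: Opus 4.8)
The plan is to reduce the statement immediately to Lemma~\ref{unital}. First I would check that the element $1$ is a two‑sided unit for the product on $\widehat{\mathcal{A}}$: specialising $\alpha=1,\ x=0$ gives $1\cdot(\beta 1+y)=\beta 1+y$, and specialising $\beta=1,\ y=0$ gives $(\alpha 1+x)\cdot 1=\alpha 1+x$. Hence, whenever $\widehat{\mathcal{A}}$ happens to be a shift associative algebra, it is in fact a \emph{unital} shift associative algebra.

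Next I would invoke Lemma~\ref{unital}: any unital shift associative algebra is commutative associative, so $\widehat{\mathcal{A}}$ is commutative associative. Finally, the map $x\mapsto 0\cdot 1+x$ embeds $\mathcal{A}$ into $\widehat{\mathcal{A}}$, and on the image the product reads $(0\cdot 1+x)(0\cdot 1+y)=0\cdot 1+xy$, i.e.\ it agrees with the original product of $\mathcal{A}$. Thus $\mathcal{A}$ is (isomorphic to) a subalgebra of a commutative associative algebra, and therefore $\mathcal{A}$ is itself commutative associative, which is the claim.

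I do not expect any real obstacle here; the only points requiring a line of verification are that $1$ is genuinely a unit for the hull product and that the inclusion $\mathcal{A}\hookrightarrow\widehat{\mathcal{A}}$ is a homomorphism, both of which are immediate from the displayed formula for the product. One could alternatively argue in a self‑contained way by setting $z=1$ directly in identity~\eqref{nearly_eq} applied inside $\widehat{\mathcal{A}}$ to get $xy=yx$ for all $x,y\in\mathcal{A}$, and then using that commutativity of a shift associative algebra forces associativity; but routing through Lemma~\ref{unital} is cleaner.
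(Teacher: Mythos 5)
Your proof is correct and follows exactly the route the paper intends: the corollary is stated immediately after Lemma~\ref{unital} precisely because the hull is unital by construction, so the lemma makes $\widehat{\mathcal{A}}$ commutative associative, and $\mathcal{A}$ inherits both properties as a subalgebra. No issues.
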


\begin{comment}
 \red
\begin{lemma}
Let $\mathcal{A}$ be a shift associative algebra and $I,J$ are ideals of $\mathcal{A}$. If $JI \subseteq IJ$, then $IJ$
is an ideal of $\mathcal{A}$.
\end{lemma}
\begin{proof}
Simply note that $\mathcal{A}\left( IJ\right) \overset{\left( \ref{nearly_eq}\right) }{%
\subseteq }\left( J\mathcal{A}\right) I\subseteq JI \subseteq IJ$ and $\left( IJ\right) \mathcal{A}\overset{%
\left( \ref{nearly_eq}\right) }{\subseteq }J\left( \mathcal{A}I\right) \subseteq JI\subseteq IJ$.\ 
\end{proof}

\black   
\end{comment}

\begin{definition}
For a positive integer $k$, an algebra $\mathcal{A}$ is called a  {$k$-nice} if 
$k$ is the minimal number, such that 
the product of any $k$ elements is the same, regardless of their association or order.
\end{definition}
Following the results and ideas from \cite[Theorem 2.1]{Hentzel}, 
we have.

\begin{theorem}\label{k-nice}
Let $\mathcal{A}$ be a shift associative algebra, then $\mathcal{A}$\ is $5$-nice.
Let $\mathcal{A}$ be a cyclic associative algebra, then $\mathcal{A}$\ is $4$-nice.
Let $\mathcal{A}$ be a commutative  associative algebra, then $\mathcal{A}$\ is $3$-nice.
\end{theorem}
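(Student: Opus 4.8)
The plan is to show that in a shift associative algebra every product of five elements collapses to a single canonical value (independent of bracketing and of the order of the factors), and then to exhibit a nonzero $4$-fold product that genuinely depends on the order, so that $5$ is minimal; the cyclic and commutative associative cases follow by the same scheme with the thresholds lowered by one each. First I would set up the bookkeeping: by power-associativity (the lemma just proved) and by the identity $(\ref{swap})$, one already knows that in a product of length $n$ one may freely swap the two factors sitting on either side of the top-level split. The core step is to prove that any length-$5$ monomial equals $x_1x_2x_3x_4x_5$ read in a fixed left-normed bracketing with a fixed ordering of the letters. To do this I would first reduce all bracketings of a fixed word of length $5$ to the left-normed one: iterated use of $(\ref{nearly_eq})$, $(xy)z = y(zx)$, together with its consequences, lets one push any internal parenthesis to the far left, exactly as in the proof of associativity, but now one must also track the cyclic/transposition shuffles that $(\ref{nearly_eq})$ introduces on the letters. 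The key intermediate claim to isolate and prove is: for length $\ge 4$, applying $(\ref{nearly_eq})$ to a left-normed word of length $k$ realizes the full symmetric group $\mathbb S_k$ on the letters once $k\ge 5$ (and only a proper subgroup for $k=4$, only the trivial action for $k=3$). Concretely, $(\ref{nearly_eq})$ applied at the top of a left-normed word induces the cycle $(1\,2\,\cdots\,k)$ combined with the transposition coming from $(\ref{swap})$, and a standard generation argument shows these generate $\mathbb S_k$ precisely when $k\ge 5$.

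After the abstract threshold computation, I would return to the three varieties in turn. For shift associative algebras: combine the "all length-$5$ bracketings are equal" reduction with the "$\mathbb S_5$ acts transitively on the letters of a length-$5$ left-normed word" claim to conclude that every product of five elements is $5$-nice; this gives the upper bound. For the lower bound I would point to the classification results later in the paper (or simply exhibit directly a $5$-dimensional non-associative example, e.g. the first non-associative shift associative algebra whose existence is asserted in the abstract) in which some product $x_1x_2x_3x_4$ of four distinct basis elements differs from a permuted product; this shows $\mathcal A$ is not $4$-nice in general, hence $5$ is minimal. For cyclic associative algebras one adds the extra relations $(xy)z=x(yz)=y(zx)$, which already make length-$4$ words bracketing-independent and realize $\mathbb S_4$ on the letters, giving $4$-nice; non-$3$-niceness is witnessed by any noncommutative associative algebra, which is cyclic associative. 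For commutative associative algebras, associativity plus commutativity is literally the statement that length-$3$ products are independent of order and bracketing, i.e. $3$-nice, and $2$-niceness fails already for, say, a $2$-dimensional commutative associative algebra with $e_1e_2\ne e_1e_1$.

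The main obstacle I anticipate is the permutation-group bookkeeping: one must show carefully that the shuffles generated by $(\ref{nearly_eq})$ and $(\ref{swap})$ on the letters of a length-$k$ word are exactly all of $\mathbb S_k$ when $k=5$, yet leave a genuine obstruction at $k=4$ (otherwise the theorem would read "$4$-nice"), and this requires being honest about which permutations are actually forced versus merely allowed after one has also collapsed bracketings. A clean way to organize this is to first prove the bracketing-collapse lemma (every length-$\ge 3$ monomial equals a left-normed one with \emph{some} ordering of letters determined by the reduction), and then separately analyze the group generated by the letter-permutations that the rewriting rules permit on a left-normed word, using $(\ref{swap})$ to get transpositions of "adjacent blocks" and $(\ref{nearly_eq})$ to get a long cycle. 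I would lean on \cite[Theorem 2.1]{Hentzel} for the structural template, adapting its argument from the first-associative-type setting with $\sigma=(123)$ to ours, since \eqref{nearly_eq} is exactly the opposite-algebra form of that identity.
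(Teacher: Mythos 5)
Your overall shape (collapse every length-$5$ monomial to a normal form, then exhibit lower-dimensional witnesses for minimality) is reasonable, but the central mechanism you propose does not work as stated, and it is not how the paper proceeds. The paper derives Theorem \ref{k-nice} from the explicit basis of the free shift associative algebra (Theorem \ref{teofree} together with Lemma \ref{all0}, whose identities are verified by computer algebra, following the computation of Hentzel--Jacobs--Muddana): in the multilinear component the dimension is $6$ in degree $3$, $12$ in degree $4$, and $1$ in degree $\ge 5$, which is exactly the $5$-nice statement, and similarly for $\CAs$. Your proposal instead rests on the claim that applying $(\ref{nearly_eq})$ to a left-normed word induces a long cycle which, together with the transposition from $(\ref{swap})$, generates $\mathbb S_k$ ``precisely when $k\ge 5$.'' This is doubly problematic. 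First, there is no well-defined permutation action on left-normed words to analyze: applying $(xy)z=y(zx)$ to $(((x_1x_2)x_3)x_4)x_5$ yields $x_4(x_5((x_1x_2)x_3))$, which is not left-normed, so the ``bracketing-collapse lemma'' you defer is in fact the entire difficulty, and it cannot be separated from the permutation bookkeeping. Second, the group-theoretic claim is false in general: a $k$-cycle and a transposition generate $\mathbb S_k$ only under extra hypotheses (adjacency in the cycle, or $k$ prime), so ``precisely when $k\ge5$'' has no standard argument behind it; and the actual degree-$4$ obstruction is a $12$-dimensional multilinear space, which is not the orbit structure of ``a proper subgroup of $\mathbb S_4$'' acting on left-normed words.

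Two of your minimality witnesses are also wrong. A noncommutative associative algebra is not in general cyclic associative (it need not satisfy $x(yz)=y(zx)$), so it cannot witness failure of $3$-niceness for $\CAs$; the correct witness is the free cyclic associative algebra itself, where $x_1(x_2x_3)$ and $x_1(x_3x_2)$ are independent basis elements of $\mathcal B_3$. Likewise, $e_1e_2\ne e_1e_1$ compares products of \emph{different} multisets of elements, which is irrelevant to the definition of $k$-nice (the definition fixes the $k$ elements and varies only order and association). If you want to salvage your approach without the computer computation, you would need to carry out the full rewriting analysis of \cite{Hentzel} degree by degree, i.e.\ prove directly that the span of all degree-$5$ multilinear monomials modulo the consequences of $(\ref{nearly_eq})$ is one-dimensional while the degree-$4$ one is not; there is no shortcut through subgroup generation.
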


\begin{lemma}
    If each algebra from the variety of 
    algebras of associative with  type $\sigma$ is $k$-nice, 
    then $\sigma=(123)$ and it has the first type,
    i.e. it is shift associative. 
\end{lemma}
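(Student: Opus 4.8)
The plan is to go through the twelve candidate varieties — the first or the second type, with $\sigma$ ranging over $\mathbb{S}_3$ — and to show that all of them except the first type with $\sigma=(123)$ are not $k$-nice, while that one is $5$-nice by Theorem~\ref{k-nice}. Since $k$-niceness is invariant under passing to the opposite algebra, it is harmless to treat each ``first/second type with $\sigma$'' as the variety cut out by the single identity it names. To show that such a variety is \emph{not} $k$-nice it suffices to produce, for every $n$, one of its algebras in which two $n$-fold products of the same $n$ elements differ; in most cases the free algebra of the variety already does this, and there we recognise that two multilinear monomials of degree $n$ are distinct by pointing to an invariant of a monomial (a feature of its labelled binary tree) that is unchanged whenever the defining identity is applied, in either direction, inside any subterm.

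\emph{Second type, any $\sigma$.} On both sides of $(x_{1}x_{2})x_{3}=(x_{\sigma(1)}x_{\sigma(2)})x_{\sigma(3)}$ the root has an internal left child; hence this identity, and every rewriting it generates, can be applied only at a node with internal left child, and in particular never inside a fully right-normed monomial $x_{i_{1}}(x_{i_{2}}(\cdots(x_{i_{n-1}}x_{i_{n}})))$. Thus in the free algebra each right-normed multilinear monomial is alone in its congruence class, so there are at least two distinct $n$-fold products for every $n\geq 2$, and the variety is not $k$-nice.

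\emph{First type with $\sigma\in\{e,(12),(23)\}$.} In these three cases a leaf-position invariant of a monomial survives: for $\sigma=e$ (associativity) the whole left-to-right sequence of its leaves; for $\sigma=(12)$, i.e.\ $(xy)z=y(xz)$, its right-most leaf; for $\sigma=(23)$, i.e.\ $(xy)z=x(zy)$, its left-most leaf. The only point to check is the behaviour of a rewriting whose rewritten subterm lies on the relevant (right-most or left-most) spine of the whole tree, which is routine. Consequently, for every $n\geq 2$ the free algebra has multilinear monomials of degree $n$ with different values of the invariant — take two differing only in the order of the last two, resp.\ of the first two, letters — so these varieties are not $k$-nice.

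\emph{First type with $\sigma\in\{(13),(132)\}$, i.e.\ $(xy)z=z(yx)$ or $(xy)z=z(xy)$.} Here no leaf-position invariant survives, and instead I would use a concrete counterexample: the three-dimensional commutative algebra $A=\langle a,b,c\rangle$ whose only nonzero products are $aa=c$ and $bc=cb=c$. One checks that $A^{2}=\langle c\rangle$ and that $c$ is central, so every product of two elements of $A$ is central; hence $A$ satisfies $(xy)z=z(xy)$, and by commutativity also $(xy)z=z(yx)$, so $A$ belongs to both varieties, although it is not associative since $(aa)b=cb=c\neq 0=a(ab)$. Moreover, for every $n\geq 3$ the $n$-fold products of the list $a,a,\underbrace{b,\dots,b}_{n-2}$ take at least the two values $c$ (bracketing to the left) and $0$ (multiplying $aa$ by any product of the $b$'s and using $bb=0$); hence $A$ is not $k$-nice for any $k$, so neither variety is $k$-nice. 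This exhausts the eleven varieties other than the shift associative one, which is $5$-nice by Theorem~\ref{k-nice}, and the lemma follows. The main obstacle is precisely this last case: no short structural invariant is available there, so one must exhibit the non-associative algebra $A$ and verify both its membership in the variety and that it carries genuinely different products of every length.
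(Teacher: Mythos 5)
Your proof is correct, and it is both more self-contained and, in one respect, more complete than the paper's own argument. Where you overlap, you coincide: for the first type with $\sigma\in\{\mathrm{id},(12),(23)\}$ the paper uses exactly your leaf-position invariants (the full leaf sequence, the rightmost generator, the leftmost generator, respectively) to produce two distinct multilinear monomials in every degree of the free algebra. The two genuine differences are these. First, for $\sigma=(13)$ and $\sigma=(132)$ the paper does not argue directly but refers to the polarization computations in the proof of Theorem~\ref{self-d} (those operads acquire monomial relations after polarization, whence nontrivial normal forms in every degree); you instead exhibit one concrete $3$-dimensional commutative algebra lying in both varieties and not $k$-nice for any $k$. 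Your example checks out: $A^2=\langle c\rangle$ is contained in the set of elements commuting with all of $A$, so $(xy)z=z(xy)=z(yx)$ for all $x,y,z$, while $(aa)b=c\neq 0=a(ab)$, and for every $n\ge 3$ the left-normed product of $a,a,b,\dots,b$ equals $c$ whereas $(aa)\cdot(\text{any product of the }b\text{'s})=0$. This is more elementary than the operadic detour, at the price of being ad hoc. Second, the paper's proof explicitly restricts attention to the first-type identity and never addresses the second type, even though the conclusion ``\dots and it has the first type'' presupposes that the second-type varieties have been ruled out; your observation that a second-type identity (in either direction of rewriting) can only be applied at a node whose left child is internal, and hence never touches a right-normed word, closes that gap cleanly and is, to my mind, an improvement on the printed proof.
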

\begin{proof}
It is enough to show that for any $\sigma$, except $\sigma=(123)$, we have at least two non-equal monomials with the same degree in the free algebra which is defined by identity $(x_1x_2)x_3=x_{\sigma(1)}(x_{\sigma(2)}x_{\sigma(3)}).$

For $\sigma=id$, the result is obvious. For $\sigma=(132)$ and $(13)$, the result is given in the proof of Theorem \ref{self-d}. For $\sigma=(23)$ and $(12)$, we notice that the leftmost and rightmost generators are fixed, respectively. In other words, for $\sigma=(23)$ the monomials of the form
$$x_{i_1}M_{i_1}\;\;\;\textrm{and}\;\;\;x_{i_2}M_{i_2}$$
are linearly independent, where $M_{i_1}$ is arbitrary non-associative monomial. Analogically, for $\sigma=(12)$
$$M_{i_1}x_{i_1}\;\;\;\textrm{and}\;\;\;M_{i_2}x_{i_2}$$
are linearly independent.
\end{proof}

\begin{corollary}
Let $\mathcal{A}$ be a shift associative algebra, then 
$(\mathcal{A}, [\cdot,\cdot])$ is a $4$-step nilpotent Lie algebra,
i.e. it satisfies
\begin{center}
$\left[ u,\left[ v,\left[ x,\left[ y,z\right] \right] \right] \right] =0.$
\end{center}
Let $\mathcal{A}$ be a cyclic associative algebra, then 
$(\mathcal{A}, [\cdot,\cdot])$ is a $3$-step nilpotent Lie algebra.
Further if $(\mathcal{A}, [\cdot,\cdot])$  has trivial multiplication, then $\mathcal{A}$ is a commutative
associative algebra.
\end{corollary}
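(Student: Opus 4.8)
The plan is to deduce both nilpotency statements directly from the $k$-niceness results of Theorem~\ref{k-nice}, and to dispatch the last assertion using the remark (recorded right after the definitions) that every commutative shift associative algebra is associative.

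First, suppose $\mathcal{A}$ is shift associative, so that by Theorem~\ref{k-nice} it is $5$-nice: for any $u,v,x,y,z\in\mathcal{A}$ every product using each of these five elements once, in whatever order and with whatever bracketing, equals a single element $P$. I would then expand the iterated commutator $[u,[v,[x,[y,z]]]]$ via $[a,b]=\tfrac12(ab-ba)$, peeling off one bracket at a time. Each fully expanded term is $\pm\tfrac1{16}$ times a product of $u,v,x,y,z$ (each occurring exactly once) with some parenthesization, and an easy induction on the nesting depth shows that at each step the operation $w\mapsto[a,w]$ produces, for every monomial of $w$, one monomial of each sign; hence among the $16$ resulting monomials exactly eight carry sign $+1$ and eight carry sign $-1$. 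Since $5$-niceness collapses every one of these monomials to $P$, the signed sum telescopes to $\tfrac1{16}(8P-8P)=0$, which is precisely the $4$-step nilpotency identity $[u,[v,[x,[y,z]]]]=0$.

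For $\mathcal{A}$ cyclic associative, Theorem~\ref{k-nice} gives $4$-niceness, and I would run exactly the same argument one level shorter: the commutator $[v,[x,[y,z]]]$ expands into $2^3=8$ monomials in $v,x,y,z$, four of each sign, all equal by $4$-niceness, hence the sum is $0$ and $(\mathcal{A},[\cdot,\cdot])$ is $3$-step nilpotent. Finally, if $(\mathcal{A},[\cdot,\cdot])$ has trivial multiplication then $[x,y]=0$ for all $x,y$, i.e. $xy=yx$, so $\mathcal{A}$ is commutative; and since a commutative shift associative algebra is associative, $\mathcal{A}$ is commutative associative.

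I do not expect a genuine obstacle: the content is entirely the translation of the (already proved) $k$-niceness into the language of the lower central series. The one point that must be written carefully rather than merely asserted is the sign bookkeeping in the commutator expansion — that the number of plus-monomials equals the number of minus-monomials at the relevant nesting level — since that balance is exactly what forces the cancellation once all monomials are identified.
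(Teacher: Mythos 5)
Your proposal is correct and is essentially the paper's own argument: the corollary is stated immediately after Theorem~\ref{k-nice} precisely because expanding the iterated commutator into $2^4$ (resp.\ $2^3$) signed monomials, all identified by $5$-niceness (resp.\ $4$-niceness), forces the cancellation, and the final assertion follows from the remark that a commutative shift associative algebra is associative. The sign-balance bookkeeping you flag is the only point requiring care, and your induction handles it correctly.
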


\begin{corollary}
  Let $\mathcal{A}$ be a shift associative algebra, 
  then $\mathcal{A}^2$ is associative and $\mathcal{A}^3$ is commutative associative.  Let $\mathcal{A}$ be a cyclic associative algebra, 
  then $\mathcal{A}^2$ is commutative associative. 
\end{corollary}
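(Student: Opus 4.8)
The whole statement should fall out of Theorem~\ref{k-nice}, once that theorem is read in its natural strong form: if an algebra is $k$-nice then, in fact, \emph{every} product of $n\ge k$ of its elements is independent of the bracketing and of the order of the factors (call such a product \emph{rigid}). The plan is first to record this strengthening. It is a short induction on $n$: given a product $P$ of $n+1$ factors, look at its outermost split $P=P_1P_2$ and use the shift-associative identity $(xy)z=y(zx)$ together with its relabelling $x(yz)=(zx)y$ to pull a single original factor out to the front, rewriting $P$ in the form $u\cdot w$ with $w$ a product of the remaining $n$ factors; by the inductive hypothesis $w$ is rigid, and one more use of the identity (choosing a convenient bracketing of $w$) shows that $u\cdot w$ does not depend on which factor was singled out. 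If $\mathcal A$ is cyclic associative one runs the same argument starting from $4$-niceness. It is also worth noting, by the same bracketing manipulations, that $\mathcal A^2=\mathcal A\mathcal A$ and $\mathcal A^3=\mathcal A^2\mathcal A+\mathcal A\mathcal A^2$ are subalgebras: a product of $\ge 5$ factors can be written as $u\cdot w$ with $w$ a rigid product of $\ge 4$ factors, hence lies in $\mathcal A\mathcal A^2\subseteq\mathcal A^3$.

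With the strengthening in hand the corollary is immediate. Let $\mathcal A$ be shift associative, hence $5$-nice. The space $\mathcal A^2$ is spanned by products $xy$, so any product of three elements of $\mathcal A^2$, in either bracketing, is a sum of products of $6\ge 5$ factors; by rigidity the two bracketings coincide, and $\mathcal A^2$ is associative. The space $\mathcal A^3$ is spanned by products $(xy)z$ (so by $3$-fold products), so a product of two elements of $\mathcal A^3$ is a sum of $6$-fold products, whence $pq=qp$, while a product of three elements of $\mathcal A^3$ is a sum of $9$-fold products, whence the associator vanishes; thus $\mathcal A^3$ is commutative associative. For $\mathcal A$ cyclic associative the same reasoning with $4$-niceness shows that products of two elements of $\mathcal A^2$ ($4$-fold) commute and products of three ($6$-fold) associate, i.e. $\mathcal A^2$ is commutative associative.

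The only real work is the strengthening of $k$-niceness from ``exactly $k$ elements'' to ``$\ge k$ elements'': for a general algebra $k$-niceness is not self-improving, and it is precisely the shift-associative identity --- which lets one absorb a lone factor into a rigid block and then shuffle it to any position --- that makes it so here. Establishing the ``independence of the singled-out factor'' cleanly is the delicate point; I expect to handle it by repeatedly rewriting with $(xy)z=y(zx)$ and $x(yz)=(zx)y$ until the two expressions in question are built entirely out of rigid blocks whose factor sets can then be matched. If the proof of Theorem~\ref{k-nice} (following \cite{Hentzel}) is already phrased so as to produce a canonical form for every product of at least $k$ factors, this step may simply be cited, and the corollary becomes a one-line consequence.
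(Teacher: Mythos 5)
Your proposal is correct and is exactly the route the paper intends: the corollary is stated without proof as a direct consequence of Theorem~\ref{k-nice}, read (as you do) in the strong form that every product of at least $5$ (resp.\ $4$) factors is independent of bracketing and order. The one step you flag as delicate --- upgrading ``exactly $k$ factors'' to ``at least $k$ factors'' --- does not need your induction at all: Theorem~\ref{teofree} already exhibits the degree-$n$ multilinear component of the free shift associative algebra, for every $n\ge 5$, as spanned by the single symmetrized monomial $x_{i_1}\circ(x_{i_2}\circ(\cdots))$, so every product of $n\ge 5$ elements equals that canonical form (and likewise in degree $\ge 4$ for $\CAs(X)$); this is precisely the ``canonical form'' you hoped to cite. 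If you do want to run the induction instead, the independence of the singled-out factor is closed by identity~(\ref{swap}): writing $u\cdot f_n(\ldots,v,\ldots)=u(Pv)=(vu)P$ and $v\cdot f_n(\ldots,u,\ldots)=(uv)P$ with $P=CD$, the swap identity gives $(vu)(CD)=(uv)(DC)$, and $(uv)(DC)=v((DC)u)=v\cdot f_n(\ldots,u,\ldots)$ by rigidity of the inner $n$-fold product --- a two-step manipulation rather than the single application of the defining identity you suggest.
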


\begin{definition}(see, \cite{isz})
    An algebra  $\mathcal{A}$ is called two-step associative, if it satisfies
    \begin{center}
        $((x,y,z),w,t)=0.$
    \end{center}
\end{definition}

\begin{corollary}
Let $\mathcal{A}$ be a shift associative algebra. Then $\mathcal{A}$ is a two-step associative algebra.
\end{corollary}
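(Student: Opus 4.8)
The plan is to derive the identity directly from the $5$-niceness of shift associative algebras established in Theorem~\ref{k-nice}, with essentially no further computation.

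First I would linearize the outer associator. Writing $(a,b,c)=(ab)c-a(bc)$ with $a=(x,y,z)$, then substituting $(x,y,z)=(xy)z-x(yz)$ and expanding by bilinearity in the first argument of each product, one obtains
\[
((x,y,z),w,t)=\big(((xy)z)w\big)t-\big((x(yz))w\big)t-\big((xy)z\big)(wt)+\big(x(yz)\big)(wt).
\]
The second step is the observation that every one of the four summands on the right is a product of the five elements $x,y,z,w,t$, each appearing exactly once and in this left-to-right order, the summands differing only in how the brackets are placed. Since $\mathcal A$ is $5$-nice by Theorem~\ref{k-nice}, the product of these five elements does not depend on the association (nor on the order), so all four summands equal one and the same element $P$. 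Hence the right-hand side collapses to $P-P-P+P=0$, i.e. $((x,y,z),w,t)=0$, which means exactly that $\mathcal A$ is two-step associative.

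There is essentially no obstacle here; the only point worth flagging is that $5$-niceness, rather than the mere associativity of $\mathcal A^2$ from the previous corollary, is the right tool. Although $(x,y,z)\in\mathcal A^2$, the letters $w$ and $t$ need not lie in $\mathcal A^2$, so one cannot conclude $((x,y,z),w,t)=0$ from associativity of $\mathcal A^2$ alone, and $\mathcal A$ is in general only $5$-nice, not $4$-nice. What saves the day is that the inner associator already accounts for three of the letters, so that together with $w$ and $t$ one reaches five, which is precisely the range in which Theorem~\ref{k-nice} applies. The entire proof then amounts to carrying out the linearization above carefully and invoking that theorem.
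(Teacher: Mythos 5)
Your proof is correct and is exactly the argument the paper intends: the corollary is stated as an immediate consequence of Theorem~\ref{k-nice}, and your linearization of $((x,y,z),w,t)$ into four degree-five products with coefficient sum $1-1-1+1=0$, all equal by $5$-niceness, is the right way to make that explicit. Your side remark that associativity of $\mathcal{A}^2$ alone would not suffice (since $w,t$ need not lie in $\mathcal{A}^2$) is also accurate.
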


\begin{definition}(see, \cite{B95,KM24})
    An algebra  $(\mathcal{A}, *_{p,q})$ is called the $(p,q)$-mutation of $\mathcal{A}$ for fixed elements $p$ and $q$ from $\mathcal{A}$
    if the new multiplication is given by the following way $x *_{p,q} y = (xp)y-(yq)x.$
\end{definition}

\begin{lemma}
Let $\mathcal{A}$ be a shift associative algebra. 
Then the mutation $(\mathcal{A}, *_{p,q})$  is a cyclic associative algebra.
\end{lemma}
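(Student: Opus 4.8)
The plan is to reduce everything to the $5$-niceness of $\mathcal{A}$ established in Theorem~\ref{k-nice}. Since $\mathcal{A}$ is shift associative, the product of any five of its elements does not depend on the bracketing or on the order of the factors; write $\langle a_1a_2a_3a_4a_5\rangle$ for this common value (this makes sense for any multiset of five elements, as the definition of $k$-nice speaks of \emph{any} $k$ elements, possibly repeated), and observe that $\langle\cdot\rangle$ is a symmetric function of its five arguments.

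Next I would simply expand the two iterated mutation products. By definition $x*_{p,q}y=(xp)y-(yq)x$, so
\begin{align*}
(x*_{p,q}y)*_{p,q}z &= \big(((xp)y-(yq)x)p\big)z-(zq)\big((xp)y-(yq)x\big)\\
&= \big(((xp)y)p\big)z-\big(((yq)x)p\big)z-(zq)\big((xp)y\big)+(zq)\big((yq)x\big).
\end{align*}
Each of these four summands is a product of the five elements $x,y,z,p,q$ (with $p$, respectively $q$, occurring twice in the first, respectively last, summand), so $5$-niceness gives
\[
(x*_{p,q}y)*_{p,q}z=\langle xyzpp\rangle-2\langle xyzpq\rangle+\langle xyzqq\rangle .
\]
Carrying out the analogous expansion
$x*_{p,q}(y*_{p,q}z)=(xp)\big((yp)z-(zq)y\big)-\big(((yp)z-(zq)y)q\big)x$
and again invoking $5$-niceness, every summand is once more a product of $x,y,z,p,q$, and one obtains the very same expression $\langle xyzpp\rangle-2\langle xyzpq\rangle+\langle xyzqq\rangle$.

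Finally, since this common value is a symmetric function of $x,y,z$, we immediately get both $(x*_{p,q}y)*_{p,q}z=x*_{p,q}(y*_{p,q}z)$ and $(x*_{p,q}y)*_{p,q}z=y*_{p,q}(z*_{p,q}x)$, i.e. $(\mathcal{A},*_{p,q})$ is associative and satisfies the cyclic identity, hence it is a cyclic associative algebra. There is no genuinely hard step here: the only points that need a little care are the correct bookkeeping of the four terms (and their signs) in each expansion, and the remark that $5$-niceness applies verbatim to products in which a factor is repeated.
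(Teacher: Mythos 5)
Your proof is correct and follows essentially the same route as the paper: both expand the mutation products into four five-fold products of $x,y,z,p,q$ and invoke the $5$-niceness of Theorem~\ref{k-nice} to identify each expression with the symmetric value the paper denotes $xyz(p-q)^2$ (your $\langle xyzpp\rangle-2\langle xyzpq\rangle+\langle xyzqq\rangle$). Your only deviation is deducing the third identity $ (x*_{p,q}y)*_{p,q}z = y*_{p,q}(z*_{p,q}x)$ from the symmetry of the common value rather than expanding it separately, which is a harmless shortcut.
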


\begin{proof}
Thanks to Theorem \ref{k-nice}, 
we have 
\begin{longtable}{rclcl}
$(x  *_{p,q} y)  *_{p,q} z $&$ =$&$ \big((xp)y-(yq)x\big)  *_{p,q} z \ =$\\ 
& $=$&$(((xp)y)p)z - (zq)((xp)y) - (((yq)x)p)z + (zq)((yq)x) $&$ =$&$   xyz (p-q)^2$\\

$x  *_{p,q} (y  *_{p,q} z) $&$ =$&$   x*_{p,q} \big( (yp)z- (zq)y\big) \ =$\\ 
& $=$&$ (xp)((yp)z) -(((yp)z)q)x -(xp)( (zq)y) + (( (zq)y)q)x $&$ = $&$   xyz (p-q)^2$\\

$y  *_{p,q} (z  *_{p,q} x) $&$ =$&$ y *_{p,q} \big((zp)x-(xq)z\big) \ =$\\ 
& $=$&$(yp)((zp)x) -((zp)x)q)y - (yp)((xq)z) + ((xq)z)q)y $&$ =$&$   xyz (p-q)^2$\\
\end{longtable}
Hence, we have our statement.

\end{proof}

Let us remember, that mutation of each perm algebra 
(i.e., associative algebra of the second associative type with $\sigma=(12)$) is in the variety of dual cyclic associative algebras \cite{KM24}.
A version of the present mutation for $p$ and $q$ from the basic field was considered in \cite[Section 6]{BBR}. Let us call this type of mutation as scalar mutation.
Namely, it was proved that 
each scalar mutation of a shift associative algebra is 
shift associative if and only if it is anti-flexible.
The next proposition shows that there is another type of algebras of associative type with $\sigma$, that is more stable under scalar mutations.

\begin{proposition}
    Each scalar mutation of an algebra of the first associative type with 
    $\sigma=(132)$ is an algebra of the first associative type with 
    $\sigma=(132).$ 
\end{proposition}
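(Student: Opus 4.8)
The plan is to unwind both pieces of terminology and then perform a short multilinear expansion. First, the identity defining the first associative type with $\sigma=(132)$ reads $(x_1x_2)x_3=x_{\sigma(1)}(x_{\sigma(2)}x_{\sigma(3)})$, and since $(132)$ sends $1\mapsto 3$, $2\mapsto 1$, $3\mapsto 2$, this is exactly
\[
(xy)z \ =\ z(xy);
\]
thus an algebra is of the first associative type with $\sigma=(132)$ precisely when every square $xy$ is central. I would also note that this identity is unchanged, up to renaming of variables, when one passes to the opposite algebra, so the ``or his opposite algebra'' clause in the definition costs nothing and we may assume $\mathcal{A}$ itself satisfies $(xy)z=z(xy)$. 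A scalar mutation of $\mathcal{A}$ is the $(p,q)$-mutation with $p,q$ taken from the ground field, i.e. the algebra $(\mathcal{A},*)$ with $x*y:=\alpha(xy)+\beta(yx)$ for fixed scalars $\alpha=p$, $\beta=-q$; see \cite[Section 6]{BBR}.

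Next I would simply expand, using bilinearity of $*$ twice:
\[
(x*y)*z \ =\ \alpha^{2}(xy)z+\alpha\beta(yx)z+\alpha\beta\,z(xy)+\beta^{2}z(yx),
\]
\[
z*(x*y) \ =\ \alpha^{2}z(xy)+\alpha\beta\,z(yx)+\alpha\beta(xy)z+\beta^{2}(yx)z.
\]
Applying the identity of $\mathcal{A}$ in the two forms $(xy)z=z(xy)$ and $(yx)z=z(yx)$, both right-hand sides collapse to $\alpha(\alpha+\beta)\,z(xy)+\beta(\alpha+\beta)\,z(yx)$. Hence $(x*y)*z=z*(x*y)$, which is the defining identity for $(\mathcal{A},*)$, so $(\mathcal{A},*)$ is again of the first associative type with $\sigma=(132)$.

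There is essentially no obstacle here: the governing identity is multilinear, and a scalar mutation keeps the outer and inner products paired in exactly the way the identity requires, so the statement is really a two-line verification. The only points needing any attention are bookkeeping ones --- reading off that $\sigma=(132)$ yields $(xy)z=z(xy)$ rather than one of the other trilinear arrangements, and observing that this variety is stable under the opposite-algebra operation so that the definition's disjunction is harmless. Both are immediate, and (in contrast with the shift associative case, which required anti-flexibility) no further hypothesis on $\mathcal{A}$ is needed.
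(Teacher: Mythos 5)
Your proof is correct and follows essentially the same route as the paper: write the scalar mutation as $x*y=\alpha(xy)+\beta(yx)$, expand $(x*y)*z$ and $z*(x*y)$ by bilinearity, and observe that the identity $(xy)z=z(xy)$ makes the two expansions coincide. You are in fact slightly more explicit than the paper (which stops after displaying the two expansions), since you carry out the final reduction to $\alpha(\alpha+\beta)\,z(xy)+\beta(\alpha+\beta)\,z(yx)$ and also note the harmlessness of the opposite-algebra clause.
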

\begin{proof}
 Let us consider the new multipication $x *_{\alpha,\beta} y= \alpha xy+\beta yx,$ where $\alpha$ and $\beta$ from the basic field.
Then it is easy to see that
\begin{longtable}{lcl}

$(x *_{\alpha,\beta}y)*_{\alpha,\beta} z$ &$=$&$
\alpha^2 (xy)z+\alpha\beta\big((yx)z+z(xy)\big)+\beta^2z(yx),$\\

$ z*_{\alpha,\beta} (x *_{\alpha,\beta}y)$ &$=$&$
\alpha^2 z(xy)+\alpha\beta \big(z(yx)+(xy)z\big)+\beta^2(yx)z.$\\
\end{longtable}
The last gives our statement.
\end{proof}

\begin{definition}(see, \cite{FK})
    An algebra  $(\mathcal{A}, *_p)$ is called a $p$-Kantor square of $\mathcal{A}$ for a fixed element $p$ from $\mathcal{A}$
    if the new multiplication is given by the following way 
  \begin{center}
        $x *_{p} y = p(xy)-(px)y-x(py).$
  \end{center}
\end{definition}

\begin{corollary}
Let $\mathcal{A}$ be a shift associative algebra. 
Then the $p$-Kantor square $(\mathcal{A}, *_{p})$  is a cyclic associative algebra.
\end{corollary}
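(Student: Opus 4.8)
The plan is to show that the $p$-Kantor square operation $x *_p y = p(xy) - (px)y - x(py)$, when applied to a shift associative algebra $\mathcal{A}$, produces an algebra satisfying the three identities defining cyclic associativity: $(x *_p y) *_p z = x *_p (y *_p z) = y *_p (z *_p x)$. Since we already know by Theorem~\ref{k-nice} that a shift associative algebra is $5$-nice, any product of five elements of $\mathcal{A}$ in the original multiplication is independent of bracketing and order; the key realization is that after expanding $(x *_p y) *_p z$ (and the other two associations) we obtain a linear combination of length-five monomials in $x, y, z, p, p$, and these can all be normalized to a single canonical form.

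First I would expand $(x *_p y) *_p z$ fully. Writing $w = x *_p y = p(xy) - (px)y - x(py)$, we have
\begin{equation}
(x *_p y) *_p z = p(wz) - (pw)z - w(pz),
\end{equation}
and then substitute the expression for $w$ into each of the three terms, yielding nine length-five monomials in $\{x, y, z, p, p\}$. By $5$-niceness (Theorem~\ref{k-nice}), each such monomial equals the common value $\lVert x y z p p\rVert$ of the product of those five elements in any order and association; call this value $D$. So $(x *_p y) *_p z$ becomes an integer multiple of $D$, and I would compute that coefficient by tracking signs: the expansions of $p(wz)$, $(pw)z$, and $w(pz)$ each contribute three terms with signs $(+,-,-)$, giving a net coefficient. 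Carrying out the same expansion for $x *_p (y *_p z)$ and for $y *_p (z *_p x)$ produces the same combinatorial pattern, hence the same multiple of $D$ in each case.

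The main point — and the only place any real care is needed — is verifying that all the monomials appearing genuinely have multiset of factors $\{x,y,z,p,p\}$ so that $5$-niceness applies uniformly, and that the three associations produce \emph{identical} coefficients rather than merely proportional ones; this is a bookkeeping check on $3 \times 3 = 9$ signed terms in each of the three cases. I expect no obstacle beyond this bookkeeping: since each factor of the Kantor square operation inserts exactly one copy of $p$ and the operation is applied twice to three variables, every resulting monomial is a product of exactly $x, y, z$ and two copies of $p$, so Theorem~\ref{k-nice} collapses everything to multiples of $D$. Once the three coefficients are seen to coincide, the three cyclic associative identities follow at once, and the proof concludes. Alternatively, one could observe that the Kantor square operation on a shift associative algebra is itself expressible through the already-established structure (e.g.\ via the anti-Poisson-Jordan product from Theorem~\ref{Jor-admiss}), but the direct $5$-niceness argument is the cleanest route.
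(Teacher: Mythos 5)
Your proposal is correct and follows essentially the same route as the paper: the paper's proof is a one-line ``direct calculation'' showing $(x*_p y)*_p z = x*_p(y*_p z) = y*_p(z*_p x) = xyzp^2$, which is exactly your collapse-to-$D$ argument via $5$-niceness (the sign count indeed gives $(1-1-1)(1-1-1)=1$ in all three associations). Nothing further is needed.
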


\begin{proof}
By the direct calculations, we have 
\begin{longtable}{rclclclcl}
$(x  *_{p} y)  *_{p} z $&$ =$&
$x  *_{p} (y  *_{p} z) $&$ =$&
$y  *_{p} (z  *_{p} x) $&$ =$ &$   xyz p^2$\\

\end{longtable}
Hence, we have our statement.

\end{proof}

\begin{corollary}
\label{ex=xe}
    Let $\mathcal{A}$ be a shift associative  algebra with an idempotent element $e$%
. Then $e  x=x  e$ for all $x \in\mathcal{A} $.
\end{corollary}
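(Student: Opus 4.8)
The plan is to read off a few operator identities for the left and right multiplications $L_e$ and $R_e$ by the idempotent $e$, and then to compose them. Writing the defining identity as $(xy)z=y(zx)$ and using $e^2=e$ throughout, I would first make the three substitutions in which two of the three arguments equal $e$. Putting $x_1=x_3=e$ gives $(ey)e=y(ee)=ye$, that is, $R_eL_e=R_e$. Putting $x_1=x_2=e$ gives $(ee)z=e(ze)$, i.e. $ez=e(ze)$, that is, $L_eR_e=L_e$. Putting $x_2=x_3=e$ gives $(xe)e=e(ex)$, that is, $R_e^2=L_e^2$.

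Next I would combine the first two relations. Composing $R_eL_e=R_e$ with $R_e$ on the right and simplifying the resulting left-hand side by $L_eR_e=L_e$ yields $R_e=R_eL_eR_e=R_e^2$; symmetrically, composing $L_eR_e=L_e$ with $L_e$ on the right and using $R_eL_e=R_e$ yields $L_e=L_eR_eL_e=L_e^2$. Hence both $L_e$ and $R_e$ are idempotent operators, and then the third relation $R_e^2=L_e^2$ collapses to $R_e=L_e$, which is exactly the assertion $ex=xe$ for all $x\in\mathcal{A}$.

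I do not anticipate a genuine obstacle here: the only real point is recognizing that the three "two-$e$" specializations of the shift identity are precisely what is needed for the operators $L_e$ and $R_e$ to close up under composition to $R_e=L_e$. If one prefers to avoid operator language, the same cancellation can be carried out element-wise, but the bookkeeping above makes the argument cleanest.
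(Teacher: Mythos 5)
Your proof is correct: the three specializations of $(xy)z=y(zx)$ with two arguments equal to $e$ do give $R_eL_e=R_e$, $L_eR_e=L_e$ and $R_e^2=L_e^2$, and since composition of linear operators is associative, evaluating $R_eL_eR_e$ and $L_eR_eL_e$ in the two possible groupings yields $R_e^2=R_e$ and $L_e^2=L_e$, whence $R_e=R_e^2=L_e^2=L_e$. However, this is a genuinely different route from the paper's. The paper proves the corollary in one line by invoking its Theorem~\ref{k-nice} (every shift associative algebra is $5$-nice, i.e.\ any product of five elements is independent of order and bracketing): writing $xe=x\big((ee)(ee)\big)$ as a product of five factors, $5$-niceness immediately gives $x\big((ee)(ee)\big)=\big((ee)(ee)\big)x=ex$. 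That argument is shorter but leans on a heavy result whose own proof goes through the free-algebra computation; your argument uses only degree-$3$ consequences of the defining identity and is entirely self-contained, which is arguably preferable if one wants the corollary available before (or independently of) the $k$-niceness machinery. The only stylistic caution is to keep the operator composition order straight, which you did: $R_eL_e$ must mean ``apply $L_e$ first,'' so that $(R_eL_e)(y)=(ey)e$ matches the substitution $x=z=e$.
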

\begin{proof}
    Let $x\in \mathcal{A}$. Since $e  e=e$ and by Theorem \ref{k-nice}, we have 
    $x  e=x  \left( \left( e 
e\right)   \left( e  e\right) \right) =\left( \left(
e  e\right)   \left( e  e\right) \right)   x=ex$.  
\end{proof}

\begin{corollary}[Dubnov-Ivanov-Nagata-Higman's Theorem for shift associative  algebras]
Each shift associative nilalgebra   over a field of characteristic $0$  is nilpotent.
\end{corollary}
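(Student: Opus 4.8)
The plan is to push the problem down to the associative subalgebra $\mathcal{A}^{2}$, where the classical associative Dubnov--Ivanov--Nagata--Higman theorem applies, and then to pull nilpotency back up to $\mathcal{A}$ using its $5$-niceness.

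As is standard for Nagata--Higman-type statements, I read the nil hypothesis as bounded index: there is an $n$ with $x^{n}=0$ for every $x\in\mathcal{A}$ (this boundedness is essential, since a non-nilpotent commutative associative nil algebra, which is in particular shift associative, exists over any field). First I would invoke the earlier corollary that $\mathcal{A}^{2}$ is associative; moreover $\mathcal{A}^{2}$ is a subalgebra of $\mathcal{A}$, because a product of two products of $\ge 2$ elements is again a product of $\ge 2$ elements. Hence $\mathcal{A}^{2}$ is an associative nilalgebra of bounded index $n$ over a field of characteristic $0$, and the classical Dubnov--Ivanov--Nagata--Higman theorem yields an integer $N=N(n)$ with $(\mathcal{A}^{2})^{N}=0$.

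Next I would use Theorem~\ref{k-nice}: since $\mathcal{A}$ is $5$-nice, any product of five or more elements of $\mathcal{A}$ is independent of the bracketing and of the order of the factors, so such products associate freely. Consequently an arbitrary bracketing of $5N$ elements $a_{1},\dots,a_{5N}\in\mathcal{A}$ equals the left-normed product $b_{1}b_{2}\cdots b_{N}$, where $b_{i}=a_{5i-4}a_{5i-3}a_{5i-2}a_{5i-1}a_{5i}$; each $b_{i}$ is a product of five (hence at least two) elements, so $b_{i}\in\mathcal{A}^{2}$, and therefore $\mathcal{A}^{5N}\subseteq(\mathcal{A}^{2})^{N}=0$. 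Thus $\mathcal{A}$ is nilpotent, of index at most $5N$.

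The point to be careful about is the step in the previous paragraph that passes from ``$5$-nice'' (equality of products of exactly five factors) to genuine associativity of chains of length $\ge 5$, which is what legitimizes regrouping $5N$ factors into $N$ elements of $\mathcal{A}^{2}$; this is a short induction on the length of the chain, starting from the five-factor case supplied by Theorem~\ref{k-nice}. The remaining ingredients --- associativity of $\mathcal{A}^{2}$ and the associative Nagata--Higman theorem --- are used as black boxes.
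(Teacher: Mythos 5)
The paper states this corollary without any proof at all (it is left as an immediate consequence of the preceding material), so there is no argument of the authors' to compare yours against line by line; the most plausible intended route is via Theorem~\ref{Jor-admiss} and power-associativity --- $\mathcal{A}$ nil of bounded index makes $\mathcal{A}^{+}$ a Jordan nilalgebra of bounded index, one invokes the Jordan analogue of Nagata--Higman, and then Corollary~\ref{nilp} transfers nilpotency back to $\mathcal{A}$. Your proof is correct and arguably cleaner, since it stays entirely inside associative territory: you use the stated corollary that $\mathcal{A}^{2}$ is an associative subalgebra, apply the classical Nagata--Higman theorem there, and regroup long products using $5$-niceness, thereby avoiding any appeal to the (more delicate) Jordan version. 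Two points you flag deserve confirmation. First, reading ``nilalgebra'' as ``nil of bounded index'' is indeed forced: without boundedness the statement is false already for commutative associative algebras, so your interpretation is the right one. Second, the passage from $5$-niceness to equality of all products of $m\ge 5$ factors is a genuine step, but your claim that it is a short induction is accurate, and in fact it needs nothing beyond $5$-niceness itself: every bracketing of $m\ge 6$ factors contains a pair of sibling leaves, contracting that pair exhibits the product as a product of $m-1$ ``elements'' to which the inductive hypothesis applies, and the apparent dependence on which pair was contracted disappears because any representative of the common $(m-1)$-fold value can be re-read with a different pair contracted. With that induction spelled out, your bound $\mathcal{A}^{5N}\subseteq(\mathcal{A}^{2})^{N}=0$ is valid and the corollary follows.
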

Any nilpotent algebra is solvable, and any solvable power-associative algebra is a nilalgebra. 
\begin{corollary}
Any solvable shift associative algebra is nilpotent.
\end{corollary}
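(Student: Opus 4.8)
The plan is to deduce the statement with no new computation, by chaining together the facts already assembled in this section. The logical skeleton is: shift associative $\Rightarrow$ power-associative $\Rightarrow$ (if solvable) nilalgebra $\Rightarrow$ (by the Dubnov-Ivanov-Nagata-Higman theorem for shift associative algebras) nilpotent.

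Concretely, let $\mathcal{A}$ be a solvable shift associative algebra over a field of characteristic $0$. First I would invoke the earlier Lemma asserting that every shift associative algebra is power-associative, so $\mathcal{A}$ is power-associative. Next I would apply the general fact recorded immediately before the statement, namely that any solvable power-associative algebra is a nilalgebra; hence $\mathcal{A}$ is a shift associative nilalgebra. Finally, the Dubnov-Ivanov-Nagata-Higman corollary for shift associative algebras promotes "nilalgebra" to "nilpotent", which is exactly the assertion.

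The only point that needs attention is the hypothesis on the ground field: the Dubnov-Ivanov-Nagata-Higman-type corollary used in the last step is stated over a field of characteristic $0$, so the present corollary inherits that assumption, and I would make this explicit (or rely on the ambient conventions fixed earlier). Apart from that there is no genuine obstacle here — the argument is a three-line composition of previously established results, and I would write it out as such rather than expand any of the intermediate steps.
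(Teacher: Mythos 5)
Your argument is exactly the one the paper intends: it leaves the corollary without an explicit proof precisely because it follows from the preceding remark (solvable power-associative $\Rightarrow$ nilalgebra) combined with the power-associativity lemma and the Dubnov--Ivanov--Nagata--Higman corollary. Your observation about inheriting the characteristic-$0$ hypothesis from that last step is a fair and worthwhile caveat, since the section's blanket assumption is only characteristic different from $2$.
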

Thus, the concepts of nilpotent algebra, solvable algebra, and nilalgebra coincide for finite-dimensional shift associative algebras. Hence there is a unique maximal nilpotent ideal $\mathcal{R}$ in any finite-dimensional  shift associative  algebras; we 
call $\mathcal{R}$ the  {radical} of $\mathcal{A}$ ($\mathcal{R}= \mathcal{R}ad(
\mathcal{A})$). 
\begin{corollary}\label{nilp}
Let $\mathcal{A}$ be a shift associative algebra. Then $\mathcal{A}$\ is
nilpotent if and only if $\mathcal{A}^{+}$ is nilpotent.
\end{corollary}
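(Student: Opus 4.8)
The plan is to establish the two implications separately; the forward one is purely formal, and the converse reduces to the Dubnov-Ivanov-Nagata-Higman-type statement already recorded in this section.

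Suppose first that $\mathcal{A}$ is nilpotent. Since $x\circ y=\tfrac12(xy+yx)$, one has $\mathcal{A}^{i}\circ\mathcal{A}^{j}\subseteq\tfrac12\big(\mathcal{A}^{i}\mathcal{A}^{j}+\mathcal{A}^{j}\mathcal{A}^{i}\big)\subseteq\mathcal{A}^{i+j}$ for all $i,j\ge 1$, and a straightforward induction on $n$ then gives $(\mathcal{A}^{+})^{n}\subseteq\mathcal{A}^{n}$, where $\mathcal{A}^{+}=(\mathcal{A},\circ)$. Hence $\mathcal{A}^{n}=0$ forces $(\mathcal{A}^{+})^{n}=0$, so $\mathcal{A}^{+}$ is nilpotent.

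For the converse I would use power-associativity. Recall that $\mathcal{A}$ is power-associative, and by Theorem~\ref{Jor-admiss} the algebra $\mathcal{A}^{+}$ is Jordan, hence also power-associative; moreover the $n$-th power of a single element is computed the same way in $\mathcal{A}$ and in $\mathcal{A}^{+}$, since $x^{\circ 2}=\tfrac12(x^{2}+x^{2})=x^{2}$ and inductively $x^{\circ(n+1)}=x^{\circ n}\circ x=\tfrac12(x^{n}x+xx^{n})=x^{n+1}$. Therefore $\mathcal{A}$ is a nilalgebra if and only if $\mathcal{A}^{+}$ is. Now if $\mathcal{A}^{+}$ is nilpotent it is in particular a nilalgebra, so $\mathcal{A}$ is a nilalgebra, whence $\mathcal{A}$ is nilpotent: over a field of characteristic $0$ this is precisely the Dubnov-Ivanov-Nagata-Higman corollary stated above, while in the finite-dimensional case it follows from the coincidence of ``nilalgebra'' and ``nilpotent'' for shift associative algebras noted earlier in this section.

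The one non-formal ingredient is this last implication ``nilalgebra $\Rightarrow$ nilpotent'', which is where the characteristic-zero hypothesis (or finite-dimensionality) enters; everything else is just bookkeeping with $x\circ y=\tfrac12(xy+yx)$ and power-associativity. A more hands-on route to the converse would be to exploit $5$-niceness (Theorem~\ref{k-nice}): every product of $N\ge 5$ elements of $\mathcal{A}$ equals the corresponding iterated $\circ$-product, so that $\mathcal{A}^{N}\subseteq(\mathcal{A}^{+})^{N}$; but controlling products whose outermost split has mixed block sizes (already the $3+3$ case landing in $\mathcal{A}^{3}\cdot\mathcal{A}^{3}$) is fiddly and still rests on the same niceness input, so the argument through nilalgebras is the cleaner one and the one I would present.
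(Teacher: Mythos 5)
Your proof is correct and follows essentially the route the paper intends (the corollary is stated without an explicit proof, but the surrounding text — power-associativity, the coincidence of powers $x^{\circ n}=x^{n}$, and the Dubnov--Ivanov--Nagata--Higman corollary together with the observation that nilpotency, solvability and the nilalgebra property coincide for finite-dimensional shift associative algebras — is exactly the machinery you invoke). The direct containment $(\mathcal{A}^{+})^{n}\subseteq\mathcal{A}^{n}$ for the forward implication and the reduction of the converse to the nilalgebra property are both sound.
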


\begin{definition}[see, \cite{KP16}] Let $\mathcal A$ be an algebra, $n$ be a natural number $ \geq 2$, and $f$ be an arrangement of brackets on a product of length $n$. A linear mapping $D$ on $\mathcal A$ is called an $f$-Leibniz derivation of $\mathcal A$, if for any $a_1, \dots a_n \in \mathcal A$ we have
\begin{center}
$D([a_1,\ldots,a_n]_f) \ =\  \sum\limits_{i=1}^n[a_1,a_2,\ldots, D(a_i), \ldots a_n]_f.$
\end{center}
If $D$ is an $f$-Leibniz derivation for any arrangement $f$ of length $n$, then $D$ will be called a {Leibniz-derivation of $\mathcal A$ of order $n.$} 
\end{definition}

\begin{corollary}
Let $\mathcal{A}$ be a finite-dimensional shift associative algebra over a field of characteristic $0$. If $\mathcal{A}$\ admits
an invertible Leibniz-derivation, then $\mathcal{A}$ is nilpotent.
\end{corollary}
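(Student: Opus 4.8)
The plan is to push the hypothesis down to the associated Jordan algebra $\mathcal{A}^{+}=(\mathcal{A},\circ)$, invoke the known characterization of nilpotent Jordan algebras by invertible Leibniz-derivations, and then pull nilpotency back to $\mathcal{A}$ via Corollary~\ref{nilp}. The ingredients are already in place: by Theorem~\ref{Jor-admiss} the algebra $(\mathcal{A},\circ)$ is Jordan, and by Corollary~\ref{nilp} the algebra $\mathcal{A}$ is nilpotent if and only if $\mathcal{A}^{+}$ is.

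First I would fix an invertible Leibniz-derivation $D$ of $\mathcal{A}$, say of order $n$, and check that $D$ is again a Leibniz-derivation of $\mathcal{A}^{+}$ of the same order $n$ (and still invertible, since it is literally the same linear operator on the same space). Since $x\circ y=\frac{1}{2}(xy+yx)$ is bilinear, a straightforward induction on $n$ shows that, for every bracket arrangement $g$ of length $n$, the $g$-product $[a_{1},\dots,a_{n}]_{g}$ formed with $\circ$ equals a fixed linear combination $\sum_{\pi}c_{\pi}\,[a_{\pi(1)},\dots,a_{\pi(n)}]_{f_{\pi}}$ of ordinary length-$n$ products in $\mathcal{A}$, the coefficients $c_{\pi}$ and arrangements $f_{\pi}$ being independent of the $a_{i}$. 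Applying $D$ to both sides, using that $D$ is an $f_{\pi}$-Leibniz derivation of $\mathcal{A}$ for each arrangement $f_{\pi}$ of length $n$, and recombining the same coefficients $c_{\pi}$, one obtains $D\bigl([a_{1},\dots,a_{n}]_{g}^{\circ}\bigr)=\sum_{i=1}^{n}[a_{1},\dots,D(a_{i}),\dots,a_{n}]_{g}^{\circ}$; as $g$ was arbitrary, $D$ is a Leibniz-derivation of $\mathcal{A}^{+}$ of order $n$.

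Next I would apply the Jordan-algebra case of the characterization of nilpotent nonassociative algebras by invertible Leibniz-derivations from \cite{KP16}: a finite-dimensional Jordan algebra over a field of characteristic $0$ admitting an invertible Leibniz-derivation is nilpotent. Since $\mathcal{A}^{+}$ is such a Jordan algebra, $\mathcal{A}^{+}$ is nilpotent, and Corollary~\ref{nilp} then gives that $\mathcal{A}$ is nilpotent.

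The genuine content here is imported --- it is the Jordan-algebra input from \cite{KP16} --- so the only thing to be careful about is the reduction itself. The one point worth checking is that the order of the Leibniz-derivation is not inflated in passing from $\mathcal{A}$ to $\mathcal{A}^{+}$; this holds precisely because $\circ$ is bilinear, so length-$n$ $\circ$-monomials unfold into combinations of length-$n$ (never longer) monomials of $\mathcal{A}$. A self-contained alternative would be to show directly that $\mathcal{A}$ is a nilalgebra and then invoke the shift associative Dubnov-Ivanov-Nagata-Higman theorem together with power-associativity of $\mathcal{A}$; but this is more delicate, since an invertible $D$ need not stabilize the (associative, commutative) one-generated subalgebras, which makes element-by-element control awkward --- hence the route through $\mathcal{A}^{+}$ and \cite{KP16} is the cleaner one.
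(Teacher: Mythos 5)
Your argument is exactly the paper's: pass the invertible Leibniz-derivation to $\mathcal{A}^{+}$, apply the Jordan case of \cite[Theorem 30]{KP16} to get nilpotency of $\mathcal{A}^{+}$, and conclude via Corollary~\ref{nilp}. Your extra care about the order not being inflated when unfolding $\circ$-monomials is a worthwhile detail that the paper leaves implicit, but the route is the same.
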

\begin{proof}
Let $D$ be an invertible Leibniz-derivation of $\mathcal{A}$. Then $D$ is also an
invertible Leibniz-derivation of $\mathcal{A}^{+}$ and 
therefore by \cite[Theorem 30]{KP16}, $\mathcal{A}^{+}$ is nilpotent.
Hence, by Corollary \ref{nilp}, $\mathcal{A}$ is nilpotent.
\end{proof}

\section{On the self-dual associative type operads}

Throughout this section, we assume that our
field has characteristic  $0$. 
In this section, as given in \cite{GK94}, we compute the Koszul dual operad $\SAs^!$, where the variety of shift associative algebras governs operad $\SAs$. In addition, we check which associative type operads are self-dual and Koszul.

\begin{theorem}
\label{SAs}
Dual operad to $\SAs$ is itself. An operad $\SAs$ is not Koszul.
\end{theorem}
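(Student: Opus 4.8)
The plan is to treat the two assertions of Theorem~\ref{SAs} separately.

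\emph{Self-duality.} Following Ginzburg--Kapranov \cite{GK94}, I would present $\SAs=\mathcal{P}(E,R)$ with $E$ the regular $\mathbb{S}_2$-module (one binary operation $\mu(x,y)=xy$ together with its transpose), so that $\mathcal{F}(E)(3)$ is $12$-dimensional, spanned by the left-comb monomials $(x_ax_b)x_c$ and the right-comb monomials $x_a(x_bx_c)$ for $(a,b,c)$ a permutation of $(1,2,3)$. The relation space $R$ is the $\mathbb{S}_3$-submodule generated by $(x_1x_2)x_3-x_2(x_3x_1)$; the six vectors obtained by permuting $x_1,x_2,x_3$ have pairwise distinct left-comb leading monomials, hence are linearly independent, so $\dim R=6$ (and thus $\dim\SAs(3)=6$). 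Since $E^\vee$ is again the regular representation of $\mathbb{S}_2$ (the sign twist affects only the $\mathbb{S}_2$-action), $\SAs^!=\mathcal{P}(E^\vee,R^\perp)$ is again governed by a single binary generator $\nu$.

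Next I would compute $R^\perp$ with respect to the standard Koszul pairing on $\mathcal{F}(E)(3)$: dual left combs pair with left combs via $\langle\nu(\nu(x_a,x_b),x_c),(x_{a'}x_{b'})x_{c'}\rangle=\delta_{(a,b,c),(a',b',c')}$, dual right combs with right combs via $-\delta_{(a,b,c),(a',b',c')}$, and monomials of different tree shape pair to zero (the normalization being pinned down by the textbook fact $\mathrm{As}^!=\mathrm{As}$). Writing a general element as $\sum\alpha_{abc}\,\nu(\nu(x_a,x_b),x_c)+\sum\beta_{abc}\,\nu(x_a,\nu(x_b,x_c))$ and imposing orthogonality to the six generators of $R$ forces $\beta_{bca}=-\alpha_{abc}$; hence $R^\perp$ is $6$-dimensional and is spanned by the vectors $\nu(\nu(x_a,x_b),x_c)-\nu(x_b,\nu(x_c,x_a))$, i.e. exactly the shift associativity relation $(x_ax_b)x_c=x_b(x_cx_a)$ written for the operation $\nu$. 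Therefore $\SAs^!\cong\SAs$.

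\emph{Non-Koszulness.} Here I would invoke the Euler-characteristic criterion: a Koszul operad $\mathcal{P}$ satisfies $f^{\mathcal{P}^!}\!\left(-f^{\mathcal{P}}(-t)\right)=t$ for the exponential generating function $f^{\mathcal{P}}(t)=\sum_{n\ge1}\tfrac{\dim\mathcal{P}(n)}{n!}t^n$, which for a self-dual operad reads $f(-f(t))=-t$. Using $\dim\SAs(1)=1$, $\dim\SAs(2)=2$, $\dim\SAs(3)=6$, one checks that $f(-f(t))=-t$ holds automatically through order $t^4$, while comparing the coefficients of $t^5$ gives the necessary condition $\dim\SAs(5)=15\dim\SAs(4)-240$. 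But shift associative algebras are $5$-nice by Theorem~\ref{k-nice}, so all multilinear degree-$5$ monomials in the free shift associative algebra coincide, giving $\dim\SAs(5)=1$; then the condition forces $\dim\SAs(4)=\tfrac{241}{15}\notin\mathbb{Z}$, a contradiction. Hence $\SAs$ is not Koszul.

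The main obstacle I anticipate is the sign and $\mathbb{S}_2$-action bookkeeping in the Koszul-dual step --- in particular correctly normalizing the pairing and tracking the identification $E^\vee\cong E$ --- although, as indicated above, the orthogonal complement simplifies cleanly once the pairing is fixed by a known example. For the non-Koszulness part, the only input beyond the standard numerical criterion is the value $\dim\SAs(5)=1$ supplied by $5$-niceness, after which the contradiction is immediate; alternatively one could establish non-Koszulness from a Gröbner-basis computation for the operadic ideal, exhibiting an essential generator in arity $>3$, but the generating-function route is the shortest to write down.
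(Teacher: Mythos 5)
Your proof is correct, but both halves take a genuinely different route from the paper's. For self-duality, the paper does not compute $R^\perp$ through the Koszul pairing at all; it uses the equivalent characterization of the dual of a binary quadratic operad via Lie-admissibility of $S\otimes U$: after rewriting all six left combs as right combs, it expands the cyclic Jacobi sum and finds that the only surviving constraint is $(uv)w-v(wu)=0$, recovering $\SAs^!=\SAs$. Your direct orthogonal-complement computation is equally standard and arguably more self-contained, though it carries exactly the sign/$\mathbb{S}_2$-twist bookkeeping you flag (your normalization pinned by $\mathrm{As}^!=\mathrm{As}$ does resolve it, and the resulting relation $\beta_{bca}=-\alpha_{abc}$ is right). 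For non-Koszulness the divergence is more interesting: the paper computes $\dim\SAs(n)$ for $n\le 5$ by machine (getting $\dim\SAs(4)=12$, $\dim\SAs(5)=1$) and checks numerically that $H(H^!(t))=t+\tfrac{61}{60}t^5+O(t^6)\neq t$, whereas you extract from the order-$t^5$ coefficient the symbolic necessary condition $\dim\SAs(5)=15\dim\SAs(4)-240$ (which I verified; as a sanity check it holds for $\mathrm{As}$: $120=15\cdot 24-240$) and then get a contradiction from $\dim\SAs(5)=1$ together with integrality of $\dim\SAs(4)$. This buys independence from the computer calculation of the arity-$4$ component, at the price of importing Theorem~\ref{k-nice}, which precedes this theorem in the paper, so there is no circularity. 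The only point to tighten: $5$-niceness by itself gives $\dim\SAs(5)\le 1$; the equality your contradiction needs also uses that the commutative associative operad is a quotient of $\SAs$, so the arity-$5$ component is nonzero --- the same observation you already invoke elsewhere, hence a one-line addition rather than a gap.
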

\begin{proof}
Firstly, let us fix a multilinear basis of shift associative algebra of degree $3$. This is all right-normed monomials. All left-normed monomials can be written by right-normed monomials as follows:
$$(ab)c=b(ca),\;(ba)c=a(cb),\;(ac)b=c(ba),$$
$$(ca)b=a(bc),\;(bc)a=c(ab),\;(cb)a=b(ac).$$
The Lie-admissibility condition for $S\otimes U$ gives the defining identities of the operad $\SAs^!$, where $S$ is a shift associative algebra.
Then
\begin{multline*}
[[a\otimes u,b\otimes v],c\otimes w]=(ab)c\otimes (uv)w-(ba)c\otimes (vu)w-
c(ab)\otimes w(uv)+c(ba)\otimes w(vu)=\\
b(ca)\otimes (uv)w+a(cb)\otimes -(vu)w+
c(ab)\otimes -w(uv)+c(ba)\otimes w(vu).
\end{multline*}
Similarly,
\begin{multline*}
[[b\otimes v,c\otimes w],a\otimes u]=(bc)a\otimes (vw)u-(cb)a\otimes (wv)u-
a(bc)\otimes u(vw)+a(cb)\otimes u(wv)=\\
c(ab)\otimes (vw)u+b(ac)\otimes -(wv)u+
a(bc)\otimes -u(vw)+a(cb)\otimes u(wv).
\end{multline*}
Further,
\begin{multline*}
[[c\otimes w,a\otimes u],b\otimes v]=(ca)b\otimes (wu)v-(ac)b\otimes (uw)v-
b(ca)\otimes v(wu)+b(ac)\otimes v(uw)=\\
a(bc)\otimes (wu)v+c(ba)\otimes -(uw)v+
b(ca)\otimes -v(wu)+b(ac)\otimes v(uw).
\end{multline*}
Therefore,
\begin{multline*}
[[a\otimes u,b\otimes v],c\otimes w]+[[b\otimes v,c\otimes w],a\otimes u]+[[c\otimes w,a\otimes u],b\otimes v]=\\
b(ca)\otimes\{(uv)w-v(wu)\}+a(cb)\otimes\{(vu)w-u(wv)\}+c(ab)\otimes\{(vw)u-w(uv)\}+\\
b(ac)\otimes\{(wv)u-v(uw)\}+a(bc)\otimes\{(wu)v-u(vw)\}+c(ba)\otimes\{(uw)v-w(vu)\}.
\end{multline*}
We obtain only one identity
$$(uv)w-v(wu)=0,$$
which means that the operad $\SAs$ is self-dual.
Calculating the dimension of operad $\SAs$ up to degree $5$  by means of the package \cite{DotsHij}, we obtain the beginning part of the Hilbert series of operads $\SAs$ and $\SAs^{!}$, which is
$$H(t)=H^!(t)=-t+t^2-t^3+t^4/2-t^5/120+O(t^6).$$
Thus,
$$H(H^!(t))=t + 61t^5/60+O(t^6)\neq t.$$
By \cite{GK94}, an operad $\SAs$ is not Koszul.
\end{proof}

\begin{theorem}\label{self-d}
For the first associative type operads, we have
\begin{longtable}{ | l | l| l | } 
  \hline
  operad type& dual operad &  Koszulity  \\ 
  \hline
  $(ab)c=a(bc)$ & self-dual & Koszul \\   \hline
  $(ab)c=b(ca)$  & self-dual & not Koszul  \\ \hline
  $(ab)c=c(ab)$ & self-dual & Koszul  \\ \hline
  $(ab)c=a(cb)$  & $(ab)c=-a(cb)$ & not Koszul \\ \hline
  $(ab)c=b(ac)$  & $(ab)c=-b(ac)$ & not Koszul \\ \hline
  $(ab)c=c(ba)$ &  $(ab)c=-c(ba)$ & Koszul  \\ \hline
\end{longtable}
 
\end{theorem}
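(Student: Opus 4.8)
The plan is to check, row by row, the two assertions of the table: the identification of the Koszul dual operad, and the Koszulity (or not) of the operad itself. For the duals I would use exactly the device of the proof of Theorem~\ref{SAs}. If $\mathcal P$ is the binary quadratic operad on one non-symmetric product with the single relation $(x_1x_2)x_3=x_{\sigma(1)}(x_{\sigma(2)}x_{\sigma(3)})$ for a fixed $\sigma\in\mathbb S_3$, then by Ginzburg--Kapranov an algebra $U$ is a $\mathcal P^{!}$-algebra precisely when, for every $\mathcal P$-algebra $S$, the tensor product $S\otimes U$ with bracket $[a\otimes u,b\otimes v]=ab\otimes uv-ba\otimes vu$ is a Lie algebra; this bracket is automatically skew-symmetric, so the Lie-admissibility of $S\otimes U$ amounts to the Jacobi identity alone, and its defining identities on $U$ are obtained by expanding Jacobi and separating the coefficients of a fixed monomial basis of $S$ in arity $3$.

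Concretely, for each $\sigma$ I would take as a multilinear basis of the degree-$3$ component of a $\mathcal P$-algebra the six right-normed monomials $x_i(x_jx_k)$, using the defining identity read as the rewriting rule $(x_1x_2)x_3\mapsto x_{\sigma(1)}(x_{\sigma(2)}x_{\sigma(3)})$ to eliminate all six left-normed monomials (this rule applies to each left-normed monomial exactly once, so the reduction is unambiguous, just as in the displayed list in the proof of Theorem~\ref{SAs}). Substituting this basis into the Jacobi identity for $S\otimes U$ and collecting the six coefficients yields, in each case, a single $\mathbb S_3$-orbit of a relation on $U$: for the even permutations $\sigma\in\{\mathrm{id},(123),(132)\}$ one recovers $(uv)w=u_{\sigma(1)}(u_{\sigma(2)}u_{\sigma(3)})$ unchanged (for $\sigma=(123)$ this is precisely Theorem~\ref{SAs}), while for the transpositions $\sigma\in\{(12),(13),(23)\}$ a single sign appears and $\mathcal P^{!}$ is governed by $(uv)w=-\,u_{\sigma(1)}(u_{\sigma(2)}u_{\sigma(3)})$; in both cases the outcome matches $(uv)w=\mathrm{sgn}(\sigma)\,u_{\sigma(1)}(u_{\sigma(2)}u_{\sigma(3)})$, which is exactly the sign twist built into the Koszul-dual pairing. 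The relation submodule generated by the defining identity inside the $12$-dimensional $\mathbb S_3$-module of bracketings of $x_1x_2x_3$ is $6$-dimensional for every $\sigma$, hence so is its orthogonal complement, so the single orbit of relations found already spans it; this certifies that we have computed $\mathcal P^{!}$ itself and not a proper quotient.

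For the Koszulity column I would split into three cases. Row~$1$ is the associative operad, which is Koszul and self-dual by the classical computation of Ginzburg--Kapranov. For the non-Koszul rows $2,4,5$ I would invoke the necessary condition $H(H^{!}(t))=t$ on the generating functions of a Koszul operad: computing $H$ and $H^{!}$ up to degree $6$ with the package \cite{DotsHij}, as in the proof of Theorem~\ref{SAs} (which already settles row~$2$), one sees that the composite already disagrees with $t$ in the $t^{5}$-coefficient, so by \cite{GK94} these operads are not Koszul; and since the $(12)$-type and $(23)$-type operads are opposite to one another, the computations for rows $4$ and $5$ coincide after reversing all products. Finally, for the Koszul rows $3$ and $6$ I would exhibit an explicit quadratic Gröbner basis of the associated shuffle operad: after choosing a suitable monomial order and orienting the shuffle form of the defining relation, I would check that every $S$-polynomial produced in arity $4$ reduces to zero, so the Gröbner basis stays in degree $2$ and the operad is Koszul by the operadic Gröbner basis criterion.

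The main obstacle is this last step, the positive Koszulity of rows $3$ and $6$: the arity-$4$ confluence check is a finite but delicate case analysis, and for the sign-twisted relation of row~$6$ the signs genuinely interact with the rewriting, so the monomial order has to be chosen with care; the Hilbert-series functional equation does hold for these two rows, but being only necessary it cannot replace this argument. By contrast, the computation of all six duals is routine once the arity-$3$ basis has been fixed, and the non-Koszulity of rows $2,4,5$ reduces to the single numerical check described above.
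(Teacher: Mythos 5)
Your computation of the six dual operads (via Lie-admissibility of $S\otimes U$ and the sign $\mathrm{sgn}(\sigma)$), together with the dimension count showing the relation module is $6$-dimensional for every $\sigma$, matches the paper's method in Theorem \ref{SAs} and its extension here; likewise your treatment of the non-Koszul rows $2$, $4$, $5$ via the functional equation $H(H^{!}(t))=t$ is exactly what the paper does (and your observation that rows $4$ and $5$ are opposite to one another is a small economy the paper does not exploit, computing both series separately).

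The genuine gap is precisely where you flag your ``main obstacle'': the positive Koszulity of rows $3$ and $6$. You propose to find a quadratic Gr\"obner basis for the shuffle operad attached to the \emph{original} presentation $(ab)c=c(ab)$, resp.\ $(ab)c=c(ba)$, and to verify confluence in arity $4$ by hand. This is not carried out, and, more importantly, it is not guaranteed to succeed: admitting a quadratic Gr\"obner basis is a property of a presentation, not of the operad, so even though these operads are Koszul the presentation you have chosen may fail to be confluent in degree $2$ for every admissible monomial order. The paper sidesteps this entirely by \emph{polarizing}: writing $ab=a\circ b+[a,b]$ and separating the symmetric and antisymmetric parts in $a\leftrightarrow b$, the relation of $A_{(132)}$ splits into $[[a,b],c]=0$ and $[a\circ b,c]=0$, and that of $A_{(13)}$ into $[a\circ b,c]=0$ and $[a,b]\circ c=0$. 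In the polarized presentation the relations are \emph{monomial}, so the corresponding shuffle operad trivially has a quadratic Gr\"obner basis (there is nothing to reduce), and Koszulity follows at once from \cite{Bremner-Dotsenko}. This change of presentation is the one idea your plan is missing; without it, or without actually exhibiting the confluent quadratic basis you promise, the two Koszul entries in rows $3$ and $6$ remain unproved.
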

\begin{proof}
The first identity $(x_1x_2)x_3=id x_1(x_2x_3)$ is a classical result of an associative algebra, where $id\in \mathbb{S}_3$.

Let us denote by $A_{(\sigma)}$ an operad which is governed by the variety of algebras defined by identity $(x_1x_2)x_3= x_{\sigma(1)}(x_{\sigma(2)}x_{\sigma(3)})$. The beginning part of the Hilbert series of the operad $A_{(\sigma)}$ we denote by $H_{(\sigma)}$.

The operad $A_{(23)}$ is not self-dual since direct calculations show that an operad $A_{(23)}^!$ is defined by identity $(x_1x_2)x_3=-(23) x_1(x_2x_3)$. For these operads, we obtain
$$H_{(23)}(t)=-t+t^2-t^3+t^4/2-t^5/6$$
and
$$H_{(23)}^!(t)=-t+t^2-t^3+t^4/2-0 t^5.$$
Finally, we have
$$H_{(23)}(H_{(23)}^!(t))=t+7 t^5/6+O(t^6)\neq t.$$

The operad $A_{(12)}$ is not self-dual since direct calculations show that an operad $A_{(23)}^!$ is defined by identity $(x_1x_2)x_3=-(12) x_1(x_2x_3)$. Also, we have

$$H_{(12)}(t)=-t+t^2-t^3+t^4/2-t^5/6,$$
$$H_{(12)}^!(t)=-t+t^2-t^3+t^4/2-0 t^5,$$
and
$$H_{(12)}(H_{(12)}^!(t))=t+7 t^5/6+O(t^6)\neq t.$$

The statements on operad $A_{(123)}$ were proved in Theorem \ref{SAs}.

For the last two operads $A_{(132)}$ and $A_{(13)}$, the statements that the operad is self-dual or not can be checked directly. To prove that operads $A_{(132)}$ and $A_{(13)}$ are Koszul, we consider the polarizations of these operads. We see that operad $A_{(132)}$ isomorphic to the operad which satisfy the following identities:
$$[a,b]=-[b,a],\;\;\;a\circ b=b\circ a,$$
$$[[a,b],c]=0,\;\;\;[a\circ b,c]=0.$$
In similar way, the operad $A_{(13)}$ isomorphic to the operad which satisfy the following identities:
$$[a,b]=-[b,a],\;\;\;a\circ b=b\circ a,$$
$$[a\circ b,c]=0,\;\;\;[a, b]\circ c=0.$$
The corresponding shuffle operads of polarization operads $A_{(132)}$ and $A_{(13)}$ have monomial relations and therefore have a quadratic Gr\"obner basis. For this follows that the operads $A_{(132)}$ and $A_{(13)}$ are Koszul \cite{Bremner-Dotsenko}.
\end{proof}

\begin{corollary}
    For any $\sigma\in\mathbb{S}_3$, an operad governed by the variety of algebras defined by the identity of the first associative type with $\sigma$
    is self-dual if and only if $\sigma $ is even.
\end{corollary}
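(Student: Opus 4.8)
The plan is to obtain this as an immediate consequence of Theorem~\ref{self-d}. The first step is to make explicit the dictionary between the six rows of the table in Theorem~\ref{self-d} and the six elements of $\mathbb{S}_3$. Writing $x_1=a,\ x_2=b,\ x_3=c$, one checks that the three identities $(ab)c=a(bc)$, $(ab)c=b(ca)$, $(ab)c=c(ab)$ are exactly $(x_1x_2)x_3=x_{\sigma(1)}(x_{\sigma(2)}x_{\sigma(3)})$ for $\sigma=\mathrm{id},\,(123),\,(132)$ respectively — the three \emph{even} permutations — whereas $(ab)c=a(cb)$, $(ab)c=b(ac)$, $(ab)c=c(ba)$ correspond to $\sigma=(23),\,(12),\,(13)$ — the three \emph{odd} permutations. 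Once this bijection is in place, the statement of the corollary is precisely the observation that in the table the first three operads are recorded as self-dual and the last three as having Koszul dual governed by the \emph{negated} right-hand side.

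For the implication ``$\sigma$ even $\Rightarrow$ self-dual'', Theorem~\ref{self-d} already records the conclusion, and I would add the structural reason behind the pattern: reading the table through the dictionary above, the Koszul dual of the relation $(x_1x_2)x_3=x_{\sigma(1)}(x_{\sigma(2)}x_{\sigma(3)})$ is $(x_1x_2)x_3=\mathrm{sgn}(\sigma)\,x_{\sigma(1)}(x_{\sigma(2)}x_{\sigma(3)})$ — this is exactly what the case-by-case computations in the proofs of Theorem~\ref{SAs} and Theorem~\ref{self-d} verify, the sign coming from pairing the left-comb monomial against the $\sigma$-permuted right-comb monomial under the Ginzburg--Kapranov conventions. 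Hence for even $\sigma$ the dual relation literally coincides with the original one, so $A_{(\sigma)}$ is self-dual.

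For the converse, let $\sigma$ be odd. Then $A_{(\sigma)}^{!}$ is governed by $(x_1x_2)x_3=-x_{\sigma(1)}(x_{\sigma(2)}x_{\sigma(3)})$, and it remains to certify that this operad is genuinely not isomorphic to $A_{(\sigma)}$, rather than merely presented by a different relation. A clean uniform argument: the one-dimensional algebra with $x^2=x$ satisfies $(ab)c=x_{\sigma(1)}(x_{\sigma(2)}x_{\sigma(3)})$ (both sides equal $x$) but, in characteristic $0$, not its negative, since that would force $x=-x$; inspecting a general one-dimensional algebra $\mu(x,x)=\lambda x$ shows that $A_{(\sigma)}^{!}$ admits no nonzero one-dimensional algebra at all, while $A_{(\sigma)}$ does. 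Since the set of isomorphism classes of one-dimensional algebras is an invariant of the operad up to isomorphism, $A_{(\sigma)}\not\cong A_{(\sigma)}^{!}$. Alternatively one may invoke the Hilbert series computed in the proof of Theorem~\ref{self-d}, which satisfy $H_{(\sigma)}\neq H_{(\sigma)}^{!}$ for $\sigma=(12),(23)$ (and similarly for $(13)$).

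The only point that is not pure bookkeeping is this last one: self-duality of an operad is a priori weaker than the defining relation being stabilized by the Koszul duality, so the appearance of a minus sign in the dual relation does not by itself rule out an exotic operad isomorphism; this must be excluded by an honest invariant, which is what the one-dimensional idempotent example (or the Hilbert series) provides. Everything else — the matching of table rows with permutations, and the coincidence of the duality sign with $\mathrm{sgn}(\sigma)$ — has essentially already been carried out in Theorem~\ref{self-d}.
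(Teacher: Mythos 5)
Your proof is correct and follows the paper's intended route: the corollary is meant to be read off from the table of Theorem~\ref{self-d} via exactly the dictionary you set up between the six identities and the six permutations (the three even ones giving the self-dual rows). Where you genuinely add something is in the ``only if'' direction. The paper declares $A_{(12)}$, $A_{(23)}$, $A_{(13)}$ not self-dual purely because the computed dual presentation carries a minus sign (the Hilbert series in the proof of Theorem~\ref{self-d} are invoked only to disprove Koszulity, and for $A_{(13)}$ no series is given at all), and, as you rightly point out, a presentation of $A_{(\sigma)}^{!}$ by the negated relation does not by itself exclude an exotic isomorphism $A_{(\sigma)}\cong A_{(\sigma)}^{!}$ realized by an $\mathbb{S}_2$-equivariant change of the binary generator. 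Your one-dimensional idempotent invariant closes this uniformly for all three odd $\sigma$ (note that an invertible equivariant change of generator $\mu\mapsto\alpha\mu+\beta\mu^{\mathrm{op}}$ has $\alpha+\beta\neq 0$, so it cannot kill a nonzero commutative one-dimensional algebra, which makes the invariant legitimate); the Hilbert-series comparison covers $(12)$ and $(23)$ but, as you note, the paper supplies no series for $(13)$, so the idempotent argument is the one that actually makes the converse airtight in all cases. In short: same skeleton as the paper, with a small but real gap in the paper's justification honestly filled.
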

\begin{remark}
For any $\sigma\in\mathbb{S}_3$, an operad governed by a variety of algebras defined by identity
$$(x_{1}x_2)x_3=(x_{\sigma(1)}x_{\sigma(2)})x_{\sigma(3)}$$ is not self-dual. The reason is that the dimension of a self-dual operad of degree $3$ must be $6$.
\end{remark}

\begin{theorem}\label{table}
For the first associative type algebras, we have the following complete list of identities under commutator and anti-commutator (i.e., independent list of identities):
\begin{longtable}{ | l | l| l | } 
  \hline
  operad type& anti-commutator &  commutator  \\ 
  \hline
  $(ab)c=a(bc)$ & ? & anti-com, Jacobi \\   \hline
  $(ab)c=b(ca)$  & given in Lemma \ref{all0} & anti-com, Jacobi, $[[[[a,b],c],d],e]=0$  \\ \hline
  $(ab)c=c(ab)$ & com & anti-com, $[[a,b],c]=0$  \\ \hline
  $(ab)c=a(cb)$  & ? & ? \\ \hline
  $(ab)c=b(ac)$  & ? & ? \\ \hline
  $(ab)c=c(ba)$ &  com & anti-com  \\ \hline
\end{longtable}
\noindent where com and anti-com are commutative and anti-commutative identities, respectively. 
\end{theorem}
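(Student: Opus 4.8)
The plan is to go through the table family by family, treating each entry that is not marked ``?'' as two separate claims: (i) every algebra of the prescribed type has its commutator (resp.\ anticommutator) algebra satisfying the displayed identities, and (ii) these identities are complete, i.e.\ every identity of those commutator (resp.\ anticommutator) algebras is a consequence of them. Claim (i) is bookkeeping against the earlier sections. Anticommutativity of $[\cdot,\cdot]$ and commutativity of $\circ$ are automatic; the Jacobi identity for the rows $(ab)c=a(bc)$ and $(ab)c=b(ca)$ is the Lie-admissibility proved in Theorem~\ref{Jor-admiss}; the relation $[[[[a,b],c],d],e]=0$ for shift associative algebras is the $4$-step nilpotency of $(\mathcal A,[\cdot,\cdot])$ recorded after Theorem~\ref{k-nice}; and the relation $[[a,b],c]=0$ for $(ab)c=c(ab)$, the bare anticommutativity for $(ab)c=c(ba)$, together with the two ``com'' entries, are read off from the polarizations computed in the proof of Theorem~\ref{self-d} (with $(\mathcal A,\circ)$ Jordan for the associative and shift associative rows by Theorem~\ref{Jor-admiss}). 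For the four ``?'' cells no assertion is made: the anticommutator of an associative algebra is a special Jordan algebra, for which a finite basis of identities is not known, and the operads of $(ab)c=a(cb)$ and $(ab)c=b(ac)$ were not brought to a tractable polarized form in Theorem~\ref{self-d}.

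Completeness of the entries in the rows $(ab)c=c(ab)$ and $(ab)c=c(ba)$, and of the commutator entry of the associative row, follows by direct realization. Any anticommutative algebra $(\mathcal B,B)$ satisfies $(xy)z=z(yx)$, since $B(B(a,b),c)=-B(c,B(a,b))=B(c,B(b,a))$; hence it is of type $(ab)c=c(ba)$ and, up to the scalar $\tfrac{1}{2}$, coincides with its own commutator algebra, so that commutator may be an arbitrary anticommutative algebra and ``anti-com'' is complete. If moreover $(\mathcal B,B)$ is $2$-step nilpotent, i.e.\ $B(B(a,b),c)=0$, then the same algebra is of type $(ab)c=c(ab)$; combined with (i) this shows the commutator of type-$(ab)c=c(ab)$ algebras to be exactly the class of $2$-step nilpotent Lie algebras. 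Dually, every commutative algebra satisfies both $(xy)z=z(xy)$ and $(xy)z=z(yx)$ and equals its own anticommutator, so both ``com'' entries are complete. For the associative row, completeness of ``anti-com, Jacobi'' is the classical Friedrichs--Magnus--Witt fact (a form of the Poincar\'e--Birkhoff--Witt theorem) that the free Lie algebra embeds into the commutator algebra of the free associative algebra.

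The substantive case is the commutator entry of the shift associative row: that anticommutativity, the Jacobi identity and $4$-step nilpotency generate all identities of $(\mathcal A,[\cdot,\cdot])$ for $\mathcal A$ shift associative. Equivalently, the canonical surjection $\mathrm{FreeNilpLie}_4(X)\twoheadrightarrow(\SAs(X),[\cdot,\cdot])$ onto the Lie subalgebra generated by $X$ in the free shift associative algebra (the source being the free $4$-step nilpotent Lie algebra on $X$, and the map surjective by (i)) is also injective. I would establish injectivity degreewise using the explicit basis of $\SAs(X)$ constructed earlier in the paper: in multilinear degree $n$ the source has dimension $(n-1)!$ for $n\le 4$ and $0$ for $n\ge 5$ (every multilinear Lie monomial of degree $\ge 5$ vanishes by $4$-step nilpotency), so it suffices to check that the $2$ standard Lie monomials of degree $3$ and the $6$ of degree $4$ stay linearly independent inside $\SAs(X)$ --- a finite linear-algebra computation in the degree-$3$ and degree-$4$ components of the free shift associative algebra. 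That linear-independence check is the only real obstacle. Finally, the anticommutator of the shift associative row is handled by isolating the identities of the Jordan algebra $(\mathcal A,\circ)$ --- the Jordan identity together with the relations forced by the $5$-niceness of $\mathcal A$, which collapse all $\circ$-products of length $5$ --- in Lemma~\ref{all0}, whose completeness is proved by the same free-algebra dimension count.
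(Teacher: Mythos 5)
Your proposal is correct and follows essentially the same route as the paper, whose own proof of this theorem is only a one-line citation of the proofs of Theorems \ref{self-d} and \ref{teofree}: you use the polarizations from Theorem \ref{self-d} for the rows $(ab)c=c(ab)$ and $(ab)c=c(ba)$ (supplemented by clean realization arguments via anticommutative, $2$-step nilpotent anticommutative, and commutative algebras), and the basis of $\SAs(X)$ from Theorem \ref{teofree} together with a multilinear dimension count for the shift associative row. The one piece you leave unexecuted --- the finite linear-independence check of the six degree-$4$ Lie monomials inside the $12$-dimensional multilinear component of $\SAs(X)$ --- is exactly the computation the paper also leaves implicit, absorbed into the computer-assisted verifications behind Lemma \ref{all0} and Theorem \ref{teofree}.
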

\begin{proof}
These statements follow from proofs of Theorems \ref{self-d} and \ref{teofree}.
\end{proof}

\begin{theorem}
Any cyclic associative algebra satisfies the following identities:
$$[a,b]=-[b,a],\;\;[[a,b],c]=0,$$
$$a\circ b=b\circ a,\;\;(a\circ b)\circ c=a\circ(b\circ c).$$
All identities of cyclic associative algebra under commutator and anti-commutator follow from the given identities above.

\end{theorem}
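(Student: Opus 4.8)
The plan is to handle the two assertions in turn: a direct check that each of the four displayed identities holds in every cyclic associative algebra, followed by a proof that they are complete in the sense of the preceding table, namely that all identities satisfied by the commutator $[\cdot,\cdot]$ alone, and all identities satisfied by the anti-commutator $\circ$ alone, are consequences of the listed ones.

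For the first assertion I would begin by recording the convenient reformulation that a cyclic associative algebra is exactly an associative algebra whose square $\mathcal{A}^{2}$ is central: associativity is the first equality defining the variety, and once it is available the remaining relation $x(yz)=y(zx)$ says precisely that each product $yz$ commutes with each $x$. Granting this, $[a,b]=-[b,a]$ and $a\circ b=b\circ a$ are formal; for $[[a,b],c]=0$ I would expand
\[
[[a,b],c]=\tfrac14\big((ab)c-(ba)c-c(ab)+c(ba)\big)
\]
and observe that $(ab),(ba)\in\mathcal{A}^{2}$ commute with $c$, so the right-hand side collapses; and for $(a\circ b)\circ c=a\circ(b\circ c)$ I would use centrality of the square to reduce both sides to an element of $\mathcal{A}^{2}$ times a third factor, getting $(a\circ b)\circ c=\tfrac12(abc+bac)$ and $a\circ(b\circ c)=\tfrac12(abc+acb)$, which coincide by the cyclic identity $bac=acb$. (Both facts are also immediate from the earlier corollary that $\mathcal{A}^{2}$ is commutative associative.)

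For the second assertion, the characteristic-zero hypothesis makes the polarization $xy=x\circ y+[x,y]$ invertible, so an identity in $\circ$ and $[\cdot,\cdot]$ holds on all cyclic associative algebras iff it holds in the polarization of the free cyclic associative algebra on a countable set. I would then treat the two operations separately. For $\circ$: every commutative associative algebra is already cyclic associative and coincides with its own anti-commutator algebra, so $(\mathcal{A},\circ)$ runs through \emph{all} commutative associative algebras; combined with the fact (from the first part) that $(\mathcal{A},\circ)$ is always commutative associative, this shows the identities of $(\mathcal{A},\circ)$ are exactly the consequences of commutativity and associativity. For $[\cdot,\cdot]$: it suffices to exhibit, for each $n$, a cyclic associative algebra whose commutator subalgebra on $n$ chosen elements is the free $n$-generated $2$-step nilpotent anticommutative algebra — take $\mathcal{A}=V\oplus(V\otimes V)$ with $V$ spanned by $x_{1},\dots,x_{n}$, product $u\cdot v=u\otimes v$ on $V$, and $V\otimes V$ given zero multiplication and made to annihilate $\mathcal{A}$ on both sides; this is associative with $\mathcal{A}^{3}=0$, hence cyclic associative, and $[x_{i},x_{j}]=\tfrac12(x_{i}\otimes x_{j}-x_{j}\otimes x_{i})$ together with the vanishing of all depth-two brackets exhibits the required free algebra. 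Combined with the fact that $(\mathcal{A},[\cdot,\cdot])$ is always anticommutative and $2$-step nilpotent, this pins down the identities of the commutator.

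The hard part is the completeness of the second assertion; the two model algebras above are what make it go through, and the genuinely delicate point is to confirm that the proposed bracket model really is free — i.e. that the $\binom{n}{2}$ elements $[x_{i},x_{j}]$ are linearly independent in $V\otimes V$ and that no nonzero bracket of depth two survives — after which everything reduces to the structural facts already established about $\mathcal{A}^{2}$ and to the elementary observation that commutative associative algebras form a subvariety. Throughout one must keep the characteristic-zero hypothesis in force, since it is exactly what makes the polarization faithful and lets one pass freely between the product $xy$ and the pair $(x\circ y,[x,y])$.
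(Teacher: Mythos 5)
Your proposal is correct, but it follows a genuinely different route from the paper's. The paper disposes of this theorem in one line, citing ``straightforward calculations and Theorem~\ref{k-nice}'' (the $4$-niceness of cyclic associative algebras), with the completeness half implicitly underwritten by the basis of the free algebra $\CAs(X)$ constructed just before, whose multilinear component in each degree $\geq 4$ is one-dimensional and consists of ordered $\circ$-words. You instead observe that cyclic associativity is equivalent to associativity together with centrality of $\mathcal{A}^2$, which makes all four identities immediate degree-$3$ computations (no $4$-niceness needed: $[[a,b],c]=0$ because $ab$ and $ba$ are central, and associativity of $\circ$ reduces to $bac=acb$, which is the cyclic identity itself), and you prove completeness by exhibiting witnesses: commutative associative algebras themselves for the $\circ$-identities, and the $2$-step nilpotent algebra $V\oplus(V\otimes V)$ for the bracket, whose commutator subalgebra on the generators is visibly the free anticommutative algebra modulo $[[a,b],c]=0$, namely $V\oplus\Lambda^2V$. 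This is more self-contained and more explicit than the paper's appeal to the free-algebra basis, at the cost of length. Two remarks. First, you correctly read the completeness claim as applying to identities in each operation \emph{separately} (consistent with the columns of Theorem~\ref{table}); this is the only reading under which the statement is true, since the mixed identity $[a\circ b,c]=0$ holds in every cyclic associative algebra (again because $\mathcal{A}^2$ is central) yet is not a consequence of the four listed identities. Second, your appeal to the invertibility of polarization is not actually needed once the operations are treated separately; what carries the argument is the pair of sandwich inclusions between the variety generated by the commutator (resp.\ anti-commutator) algebras and the variety defined by the corresponding listed identities.
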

\begin{proof}
It can proved by straightforward calculations and Theorem \ref{k-nice}.
\end{proof}

\begin{remark}
For any $\sigma\in \mathbb{S}_3$, 
the variety of algebras of the second associative type $\sigma,$ 
defined by identity $(x_1x_2)x_3=(x_{\sigma(1)}x_{\sigma(2)})x_{\sigma(3)},$ is not Lie admissible.
\end{remark}

\noindent{\bf Open question.}
Complete the table  in Theorem \ref{table} and
construct an analogical table as in Theorem \ref{table} for algebras
of the second associative type.

\section{Free shift associative algebras}

Let $X$ be a countable set, and $\SAs(X)$ be a free shift associative algebra generated by the set $X$. In this section, we construct a basis of $\SAs(X)$ algebra. We set
$$\mathcal{B}_1=\{x_i\},\;\;\;\mathcal{B}_2=\{x_ix_j\},\;\;\;\mathcal{B}_3=\{x_i(x_jx_k)\},$$
\begin{multline*}
\mathcal{B}_4=\{x_i(x_j(x_kx_l)),x_i(x_j(x_lx_k)),x_i(x_k(x_jx_l)),x_i(x_k(x_lx_j)),x_i(x_l(x_jx_k)),x_i(x_l(x_kx_j)),\\
x_j(x_i(x_kx_l)),x_j(x_i(x_lx_k)),x_j(x_k(x_lx_i)),x_j(x_l(x_kx_i)),x_k(x_j(x_lx_i)),x_l(x_j(x_kx_i))\;|\;i\leq j\leq k\leq l\}
\end{multline*}
and
$$\mathcal{B}_n=\{x_{i_1}\circ(x_{i_2}\circ(\cdots(x_{i_{n-1}}\circ x_{i_n})\cdots))\;|\;i_1\leq i_2\leq\cdots\leq i_n\},$$
where $x\circ y=1/2(xy+yx)$ and $n\geq 5$.

\begin{lemma}\label{all0}
Every shift associative algebra satisfies the following identities:
\begin{multline*}
    [a,[b,[c,[d,e]]]]=[a,[b,[c,d\circ e]]]=[a,[b,c\circ[d,e]]]=\\
    [a,b\circ[c,[d,e]]]=a\circ[b,[c,[d,e]]]=0,
\end{multline*}
\begin{multline*}
    [a,[b,c\circ(d\circ e)]]=[a,b\circ[c,d\circ e]]=a\circ[b,[c,d\circ e]]=[a,b\circ(c\circ[d,e])]=\\
    a\circ[b,c\circ[d,e]]=a\circ(b\circ[c,[d,e]])=0,
\end{multline*}
    $$a\circ(b\circ(c\circ[d,e]))=a\circ(b\circ[c,d\circ e])=a\circ[b,c\circ(d\circ e)]=[a,b\circ(c\circ(d\circ e))]=0,$$
$$a\circ(b\circ(c\circ d))=b\circ(a\circ(c\circ d)),$$
$$a\circ(b\circ(c\circ (d\circ e)))=a\circ(b\circ(d\circ (c\circ e))).$$
\end{lemma}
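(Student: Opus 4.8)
The plan is to reduce everything to the $5$-niceness of $\mathcal{A}$ from Theorem~\ref{k-nice}, plus one auxiliary rewriting rule. Write $\langle u_1,\dots,u_5\rangle$ for the common value — independent of bracketing and of order — of any product of five elements of $\mathcal{A}$; this is well defined by Theorem~\ref{k-nice}. For an expression that has been written explicitly as a $\mathbb{Q}$-linear combination of bracketed products, call the sum of its coefficients its \emph{weight}; since the product is bilinear, the weight is multiplicative on products, so any bracketed expression containing a commutator $[x,y]=\tfrac12(xy-yx)$ as a subfactor has weight $0$, while a purely $\circ$-bracketed expression in distinct letters has weight $1$. The key observation is: if $W\in\mathcal{A}^{4}$ is written as a combination of bracketed products of four fixed elements $b,c,d,e$, then for every $a\in\mathcal{A}$ each summand $aw$ of $aW$ is a bracketed product of the five elements $a,b,c,d,e$, hence equals $\langle a,b,c,d,e\rangle$ by $5$-niceness; therefore $aW=Wa=(\text{weight of }W)\cdot\langle a,b,c,d,e\rangle$. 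In particular $[a,W]=0$ for every $W\in\mathcal{A}^{4}$, and $a\circ W=0$ whenever $W\in\mathcal{A}^{4}$ has weight $0$.

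Every identity in the first three displayed blocks is the vanishing of an expression of one of the forms $[a,W]$ or $a\circ W$ with $W\in\mathcal{A}^{4}$ built from $b,c,d,e$: indeed $[d,e],\,d\circ e\in\mathcal{A}^{2}$, applying one more $[c,-]$ or $c\circ-$ gives an element of $\mathcal{A}^{3}$, and a further $[b,-]$ or $b\circ-$ gives an element of $\mathcal{A}^{4}$. In the cases with an outer commutator ($[a,[b,[c,[d,e]]]]$, $[a,[b,[c,d\circ e]]]$, $[a,b\circ[c,[d,e]]]$, $[a,b\circ(c\circ(d\circ e))]$, etc.) the expression vanishes by the observation above with no further input; in the cases with an outer $\circ$ (namely $a\circ[b,[c,[d,e]]]$, $a\circ(b\circ[c,[d,e]])$, $a\circ(b\circ(c\circ[d,e]))$, $a\circ[b,c\circ(d\circ e)]$, and the rest) the inner factor $W$ contains a commutator, so it has weight $0$ and again the expression vanishes. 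This disposes of every identity except the two equalities between nonzero expressions. The second of these, $a\circ(b\circ(c\circ(d\circ e)))=a\circ(b\circ(d\circ(c\circ e)))$, is immediate: each side is a combination of bracketed products of $a,b,c,d,e$ of weight $1$, hence both sides equal $\langle a,b,c,d,e\rangle$ by $5$-niceness.

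It remains to prove the degree-four identity $a\circ(b\circ(c\circ d))=b\circ(a\circ(c\circ d))$, which is the one step that does not follow formally from $5$-niceness, since $\mathcal{A}$ need not be $4$-nice. First I record the rewriting rule $p(q(rs))=r(s(pq))$, valid in every shift associative algebra: reducing $((pq)r)s$ to right-normed form by first treating the whole product as $(xy)z$ with $x=pq,\,y=r,\,z=s$ gives, via (\ref{nearly_eq}), the form $r(s(pq))$; reducing it instead by first rewriting the inner factor $(pq)r=q(rp)$ and then applying (\ref{nearly_eq}) three more times to $(q(rp))s$ gives $p(q(rs))$. Now I would expand $a\circ(b\circ(c\circ d))$ by bilinearity into eight bracketed monomials in $a,b,c,d$, reduce each to right-normed form using (\ref{nearly_eq}), and standardize with the rule $p(q(rs))=r(s(pq))$; the resulting eight-term sum is then manifestly invariant under interchanging $a$ and $b$ — this is where $c\circ d=d\circ c$ is used essentially — which is exactly the claimed identity. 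Equivalently, the identity asserts that the inner derivation of the Jordan algebra $(\mathcal{A},\circ)$ attached to the pair $a,b$ annihilates $\mathcal{A}^{2}$, and one may instead verify this directly from (\ref{nearly_eq}).

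The hard part is precisely this last identity: the commutator-heavy identities all collapse the instant one invokes $5$-niceness, but the degree-four Jordan symmetry genuinely needs the shift identity together with the short rewriting computation above. The rest is bookkeeping — checking, for each listed monomial, that peeling off one or two brackets lands it in $\mathcal{A}^{4}$ with the appropriate weight.
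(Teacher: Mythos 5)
Your proposal is correct, and it takes a genuinely different route from the paper: the paper's entire proof of Lemma~\ref{all0} is the single sentence that the identities ``can be proved using computer algebra as software programs Albert,'' whereas you give a human-checkable argument. Your weight argument is clean and does what you claim: by $5$-niceness every bracketed product of the five letters equals the common value $\langle a,b,c,d,e\rangle$, so for $W\in\mathcal{A}^4$ one gets $[a,W]=0$ unconditionally and $a\circ W=0$ exactly when the coefficient sum of $W$ vanishes, which happens precisely when a commutator occurs as a subfactor; this disposes of every displayed identity except the two non-vanishing ones, and the degree-$5$ symmetry is again immediate since both sides have weight $1$. I checked your remaining degree-$4$ computation: expanding $8\,a\circ(b\circ(c\circ d))$ and right-normalizing via \eqref{nearly_eq} gives $a(b(cd))+a(b(dc))+a(d(bc))+a(c(bd))+d(b(ca))+c(b(da))+b(a(cd))+b(a(dc))$, and applying your (involutive) rule $p(q(rs))=r(s(pq))$ to the middle four terms matches them bijectively with the corresponding terms of $8\,b\circ(a\circ(c\circ d))$ --- so the identity holds, though ``manifestly invariant'' oversells it slightly: the sum is symmetric under $a\leftrightarrow b$ only after one further application of the rule to half the terms, not term-by-term in a single canonical form. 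Two small cautions. First, your proof leans entirely on Theorem~\ref{k-nice}; the paper states that theorem before Lemma~\ref{all0} and attributes it to Hentzel--Jacobs--Muddana, so this is legitimate, but note the remark after Theorem~\ref{teofree} claiming that the free-algebra basis (whose proof uses Lemma~\ref{all0}) ``proves'' Theorem~\ref{k-nice} --- you must take the Hentzel-derived version as your input to avoid circularity. Second, what each approach buys: the paper's computer verification is shorter and covers all the degree-$5$ identities uniformly, but your argument explains \emph{why} everything collapses (all genuine content sits in the single degree-$4$ Jordan-type identity, everything else being a formal consequence of $5$-niceness), which is more illuminating and independently verifiable.
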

\begin{proof}
It can be proved using computer algebra as software programs  Albert \cite{Albert1}.
\end{proof}

\begin{theorem}\label{teofree}
The set $\bigcup_i\mathcal{B}_i$ is a basis of $\SAs(X)$ algebra.
\end{theorem}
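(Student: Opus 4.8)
The plan is to show that $\bigcup_i \mathcal{B}_i$ both spans $\SAs(X)$ and is linearly independent. For the spanning part, I would argue degree by degree. Degrees $1,2,3$ are immediate: in degree $3$ the defining identity \eqref{nearly_eq} rewrites every left-normed monomial $(x_ax_b)x_c$ as a right-normed one $x_b(x_cx_a)$, and right-normed monomials of degree $3$ are unconstrained, so $\mathcal{B}_3$ spans. For degree $4$, I would start from the fact (Theorem \ref{k-nice}) that $\mathcal{A}^3$ is commutative associative, together with identity \eqref{swap} and the relations in Lemma \ref{all0}; these let me reduce any bracketing of a degree-$4$ word to a right-normed monomial $x_i(x_j(x_kx_l))$, and then the $12$ listed representatives in $\mathcal{B}_4$ must be shown to exhaust the independent ones after exploiting the symmetry relations (the $4$-nice-type relations on $\mathcal{A}^3$ collapse many orderings). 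For degree $n\ge 5$, the key structural input is that $\mathcal{A}^3$ is commutative associative, hence products of length $\ge 5$ involve at least a length-$3$ commutative-associative "core," so every such monomial can be rewritten purely in terms of the Jordan product $\circ$; the last two identities of Lemma \ref{all0} — associativity-like and the "swap the first two Jordan factors" relation — then let me sort the indices into the nondecreasing right-normed Jordan form $x_{i_1}\circ(x_{i_2}\circ(\cdots\circ x_{i_n}))$, so $\mathcal{B}_n$ spans.

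For linear independence I would exhibit, or invoke, a shift associative algebra (or a family of them) in which the images of the proposed basis monomials are visibly independent — equivalently, compute the dimension of the multilinear component $\SAs(X)_n$ in $n$ variables and check it matches the count of multilinear elements among $\bigcup_i\mathcal{B}_i$. In degrees $\le 5$ this matches the Hilbert-series coefficients already computed in the proof of Theorem \ref{SAs} via the package \cite{DotsHij}: the coefficients $-t+t^2-t^3+t^4/2-t^5/120$ record $\dim$ of the multilinear part as $1,1,1,12,120$, and $120 = 5!/1$ is exactly the number of multilinear monomials of the form $x_{i_1}\circ(\cdots\circ x_{i_5})$ with the $i$'s being a permutation of $\{1,\dots,5\}$ modulo the symmetries built into $\mathcal{B}_5$ — so in low degrees independence is confirmed by the operadic computation. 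For general $n\ge 5$, since everything of length $\ge 5$ lies in the commutative associative algebra $\SAs(X)^3$, the free object in that range is a quotient of (a piece of) the free commutative associative algebra, and one checks that the $\circ$-monomials in $\mathcal{B}_n$ map to a genuine basis there; concretely, the number of such monomials with $i_1\le\cdots\le i_n$ is the number of multisets, which is the Hilbert-series prediction, and a specialization to a free commutative associative algebra separates them.

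The main obstacle I expect is degree $4$: it sits between the "free associative" regime (degrees $\le 3$, governed directly by \eqref{nearly_eq}) and the "free commutative associative / Jordan" regime (degrees $\ge 5$, governed by $\mathcal{A}^3$ being commutative associative and by the $\circ$-identities of Lemma \ref{all0}), and neither description applies cleanly. Producing exactly the list of $12$ representatives in $\mathcal{B}_4$ — neither too few (spanning fails) nor too many (independence fails) — requires carefully tracking which of the relations in Lemma \ref{all0}, together with \eqref{swap}, \eqref{blabla}, and the $5$-nice relations restricted to degree $4$, are active, and this is the bookkeeping-heavy step; I would carry it out by writing a complete set of rewrite rules for length-$4$ monomials and confirming confluence, or simply by matching against the computer-computed dimension $12$ of the multilinear degree-$4$ component. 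Once degree $4$ is pinned down, degrees $\ge 5$ follow uniformly from the commutative-associative structure of $\mathcal{A}^3$ and the $\circ$-normal-form argument.
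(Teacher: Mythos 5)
Your overall strategy coincides with the paper's: reduce to right-normed monomials via \eqref{nearly_eq}, expand in $\circ$ and $[\cdot,\cdot]$, use Lemma \ref{all0} to obtain the $\circ$-normal form in degree $\geq 5$, handle degree $\leq 4$ by computer verification, and get linear independence from the fact that the free commutative associative algebra is a quotient of $\SAs(X)$ (plus a dimension count in low degrees). However, two steps as you have written them do not go through. First, in degree $n\geq 5$ you attribute the elimination of all commutator-containing monomials to ``$\mathcal{A}^3$ is commutative associative.'' That fact does not kill, say, $[a,[b,[c,[d,e]]]]$ or $a\circ(b\circ(c\circ[d,e]))$: only the inner portion of such a monomial lies in $\mathcal{A}^3$, while the outer factors are generators, so commutativity and associativity inside $\mathcal{A}^3$ never apply. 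What is actually needed (and what the paper uses) is the full list of degree-$5$ vanishing identities in Lemma \ref{all0} -- all fifteen mixed $\circ/[\cdot,\cdot]$ right-normed monomials are identically zero -- after which the two pure-$\circ$ identities sort the indices. You cite only ``the last two identities'' of that lemma for the sorting and derive the vanishing from the wrong structural fact.

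Second, your independence check misreads the exponential generating series. In $H(t)=-t+t^2-t^3+t^4/2-t^5/120+O(t^6)$ the coefficient of $(-t)^n$ is $\dim\SAs(n)/n!$, so the multilinear dimensions are $1,2,6,12,1$, not $1,1,1,12,120$. The correct values do confirm the basis (there are $2$ multilinear elements in $\mathcal{B}_2$, $6$ in $\mathcal{B}_3$, $12$ in $\mathcal{B}_4$, and exactly one sorted $\circ$-monomial in $\mathcal{B}_5$), but the numbers you state would \emph{contradict} your own candidate basis in degrees $2$, $3$ and $5$, so as written the verification fails rather than succeeds. Once these two points are repaired -- invoke the vanishing identities of Lemma \ref{all0} explicitly for the spanning step in degree $\geq 5$, and read $\dim\SAs(n)=(-1)^n\,n!\,[t^n]H(t)$ -- your argument becomes essentially the proof given in the paper.
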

\begin{proof}
Firstly, let us show that any monomial of $\SAs(X)$ can be written as a linear combination of the set $\bigcup_i\mathcal{B}_i$. For monomials up to degrees $4$, the result can be verified by software programs Albert \cite{Albert1}. Starting from degree $5$, we observe that any monomial of $\SAs(X)$ can be written as a linear combination of
\begin{equation}\label{right-norm}
x_{i_1}\star(x_{i_2}\star(\cdots(x_{i_{n-1}}\star x_{i_n})\cdots)),    
\end{equation}
where $\star$ is $\circ$ or $[\cdot,\cdot]$.
Indeed, the defining identity of the shift associative algebra provides that any monomial can be written as a linear combination of the right-normed monomials. Since every monomial can be expressed in terms of the operations $\circ$ and $[\cdot,\cdot]$, i.e.,
$$xy=x\circ y+[x,y],$$
for every right-normed monomial, we have
$$x_{i_1}(x_{i_2}(\cdots(x_{i_{n-1}}x_{i_n})\cdots))=\sum_i x_{i_1}\star(x_{i_2}\star(\cdots(x_{i_{n-1}}\star x_{i_n})\cdots)),$$
due to the commutative and anti-commutative identities of the operations $\circ$ and $[\cdot,\cdot]$, respectively.

By Lemma \ref{all0}, every right normed monomial of the form (\ref{right-norm}), except
$$x_{i_1}\circ(x_{i_2}\circ(\cdots(x_{i_{n-1}}\circ x_{i_n})\cdots)),$$
is $0$. It remains to show that the generators $x_{i_1},\ldots,x_{i_n}$ can be ordered. It can be done 
by commutative identity on operation $\circ$ and identities in Lemma \ref{all0}, which contain only operation $\circ$.

It remains to be noted that 
the variety of associative-commutative algebras is a subvariety of the variety of shift associative algebras, which gives the lower bound of dimension for $\SAs(X)$. It provides linear independence of the set $\bigcup_i \mathcal{B}_i$ in $\SAs(X)$.

\end{proof}

 Let $X$ be a countable set, and $\CAs(X)$ be a free cyclic associative algebra generated by the set $X$. In this part of the section, we construct a basis of $\CAs(X)$ algebra. We set
$$\mathcal{B}_1=\{x_i\},\;\;\;
\mathcal{B}_2=\{x_ix_j\},\;\;\;
\mathcal{B}_3=\{x_i(x_jx_k), \ i \leq {\rm min}\{j,k\}\ \},$$
and
$$\mathcal{B}_n=\{x_{i_1}\circ(x_{i_2}\circ(\cdots(x_{i_{n-1}}\circ x_{i_n})\cdots))\;|\;i_1\leq i_2\leq\cdots\leq i_n\},$$
where $x\circ y=1/2(xy+yx)$ and $n \geq 4.$

The next theorem can be proved similarly to Theorem \ref{teofree}.

\begin{theorem}
The set $\bigcup_i\mathcal{B}_i$ is a basis of $\CAs(X)$ algebra.
\end{theorem}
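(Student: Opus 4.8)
The plan is to repeat the scheme of the proof of Theorem~\ref{teofree}, using two simplifications afforded by the cyclic associative setting. First, the $5$-niceness invoked there is replaced by the $4$-niceness of cyclic associative algebras (Theorem~\ref{k-nice}). Second, in place of the long list of vanishing identities in $\circ$ and $[\cdot,\cdot]$ (Lemma~\ref{all0}) I would use the short list recorded above for cyclic associative algebras: $(\mathcal{A},\circ)$ is commutative associative, $(\mathcal{A},[\cdot,\cdot])$ is Lie with $[[a,b],c]=0$, and every identity in $\circ$ and $[\cdot,\cdot]$ is a consequence of these. As before, one shows separately that $\bigcup_i\mathcal{B}_i$ spans $\CAs(X)$ and that it is linearly independent.

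For the spanning part, I would first note that in degrees $\le 3$ the defining identities $(xy)z=x(yz)=y(zx)$ rewrite every monomial as a right-normed monomial $x_i(x_jx_k)$ and then, via cyclic invariance $x_i(x_jx_k)=x_j(x_kx_i)=x_k(x_ix_j)$, into a representative of its cyclic orbit having the smallest index in front; that the surviving monomials are exactly $\mathcal{B}_1\cup\mathcal{B}_2\cup\mathcal{B}_3$ is then a finite check, by hand or with Albert~\cite{Albert1}. In degrees $n\ge 4$ I would invoke $4$-niceness: a product of $n\ge 4$ factors depends neither on the bracketing nor on the order of the factors, so it equals the ``nice product'' of its multiset of factors; expanding the nested product $x_{i_1}\circ(x_{i_2}\circ(\cdots\circ x_{i_n}))$ into ordinary products of $x_{i_1},\dots,x_{i_n}$, every summand is a product of $n\ge 4$ factors and hence equals that nice product, while the coefficients sum to $1$, so the nested $\circ$-product equals the nice product; consequently every degree-$n$ monomial, after its indices are sorted (legitimate again by $4$-niceness), lies in $\mathcal{B}_n$.

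For linear independence I would use, exactly as in Theorem~\ref{teofree}, that the variety of commutative associative algebras is a subvariety of the variety of cyclic associative algebras (indeed $(xy)z=xyz=yzx=y(zx)$ in any commutative associative algebra), so the free commutative associative algebra on $X$ is a quotient of $\CAs(X)$. For $n\ge 4$ the quotient map carries $\mathcal{B}_n$ bijectively onto the monomial basis of the degree-$n$ homogeneous component of the free commutative associative algebra, so the degree-$n$ component $\CAs(X)_n$ has dimension at least $|\mathcal{B}_n|$; combined with the fact that $\mathcal{B}_n$ spans $\CAs(X)_n$, this shows $\mathcal{B}_n$ is a basis of $\CAs(X)_n$. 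In degrees $1$ and $2$ the defining identity imposes no relations, so $\mathcal{B}_1$ and $\mathcal{B}_2$ are automatically bases; in degree $3$ the only relations are the multilinearizations of the defining identity, i.e. the cyclic relations used above, and that $\mathcal{B}_3$ is a basis modulo them is once more a finite verification with Albert~\cite{Albert1}.

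The hard part will be the degree-$3$ component: unlike degrees $\ge 4$, which are governed entirely by the commutative associative quotient through $4$-niceness, the degree-$3$ part of $\CAs(X)$ is only cyclically, not fully, symmetric, so the real content there is to pin down the correct transversal of the cyclic orbits of length-$3$ words and to confirm by direct computation that $\mathcal{B}_3$ is exactly such a transversal, neither over- nor under-complete; this is the step most sensitive to the precise description of $\mathcal{B}_3$. Everything else is routine given Theorems~\ref{k-nice} and~\ref{teofree}.
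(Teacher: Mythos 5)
Your proposal is correct and matches the paper's intent exactly: the paper's entire proof is the remark that the statement ``can be proved similarly to Theorem~\ref{teofree},'' and you carry out precisely that adaptation, with the degree-$\ge 4$ spanning step streamlined via $4$-niceness and the linear independence in high degree obtained from the commutative associative quotient, just as in the proof of Theorem~\ref{teofree}. Your explicit flagging of degree $3$ as the one place where the commutative associative lower bound is insufficient (since $\mathcal{B}_3$ contains two elements per triple of distinct indices) and must be settled by a finite computation is, if anything, more careful than the paper's own argument.
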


\begin{remark}
The bases of $\SAs(X)$  and $\CAs(X)$  and direct calculations prove Theorem \ref{k-nice}.
\end{remark}

As given above let us calculate the dual operad to the cyclic associative operad. We obtain the operad $\CAs^!$ which is defined by the following identity:
$$(a,b,c)+(b,c,a)+(c,a,b)=0.\footnote{Let us remember that this identity appeared in the study of mutations of associative algebras of the second associative type with $\sigma=(12)$ in \cite{KM24}
and each anti-flexible dual cyclic associative algebras is weakly associative (about weakly associative algebras see \cite{ale} and references therein).}
$$
Since the operad $\SAs$ is self-dual, we obtain the following inclusion
$$\CAs\subset\SAs \subset\CAs^!,$$
as the varieties.

\noindent{\bf Open question.}
Construct a basis of the free algebra $\CAs^!(X)$. Consider this algebra under commutator and anti-commutator.

\section{Structure theory for shift associative  algebras}
 
Throughout this section, we assume that $\mathbb{F}$ is an algebraically closed
field with characteristic different of $2$. All vector spaces and
algebras are assumed to be finite-dimensional and over $\mathbb{F}$.
 
\begin{theorem}[Wedderburn–Artin Theorem]
\label{sum}
Let $\mathcal{A}$ be a shift associative algebra. Then $\mathcal{A}=\mathcal{S}\oplus \mathcal{R}$ (as vector spaces), where $\mathcal{S}$ is a semisimple commutative
associative algebra and $\mathcal{R}$\ is the radical of $\mathcal{A}$ ($\mathcal{R}= \mathcal{R}ad(\mathcal{A})$).
\end{theorem}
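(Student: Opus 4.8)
The plan is the classical two-step strategy: first describe the semisimple quotient abstractly, and then lift it back into $\mathcal{A}$ as a subalgebra complementary to the radical $\mathcal{R}=\mathcal{R}ad(\mathcal{A})$. Since the preimage in $\mathcal{A}$ of a nilpotent ideal of $\mathcal{A}/\mathcal{R}$ is an extension of a nilpotent algebra by a nilpotent algebra, hence solvable, hence nilpotent by the corollaries above, the quotient $\mathcal{A}/\mathcal{R}$ is again a finite-dimensional shift associative algebra with zero radical. So the statement reduces to two claims: (I) every shift associative algebra $\mathcal{B}$ with $\mathcal{R}ad(\mathcal{B})=0$ is commutative associative, and therefore, $\mathbb{F}$ being algebraically closed, $\mathcal{B}\cong\mathbb{F}^{n}$; and (II) the quotient map $\pi\colon\mathcal{A}\to\mathcal{A}/\mathcal{R}$ admits a subalgebra section.

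For (I) I would induct on $\dim\mathcal{B}$, the case $\mathcal{B}=0$ being vacuous. A nonzero $\mathcal{B}$ with $\mathcal{R}ad(\mathcal{B})=0$ is not a nilalgebra, so it contains an idempotent $e\neq 0$ (Corollary~\ref{idem}), and by Corollary~\ref{ex=xe} we have $ex=xe$ for all $x$; thus in the Jordan algebra $\mathcal{B}^{+}=(\mathcal{B},\circ)$ (Theorem~\ref{Jor-admiss}) the multiplication operator of the idempotent $e$ is $L_{e}\colon x\mapsto ex$, and the Jordan--Peirce decomposition is $\mathcal{B}=\mathcal{B}_{1}\oplus\mathcal{B}_{1/2}\oplus\mathcal{B}_{0}$ with $\mathcal{B}_{i}=\{x : ex=ix\}$. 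Using only identity~(\ref{nearly_eq}) together with $ex=xe$ I would verify that $\mathcal{B}_{0}$ is an ideal of $\mathcal{B}$, that it inherits the property $\mathcal{R}ad(\mathcal{B}_{0})=0$, and that $\mathcal{B}_{1/2}=0$ (the facts that $\mathcal{B}^{2}$ is associative and $\mathcal{B}^{3}$ commutative associative are convenient bookkeeping here). By induction $\mathcal{B}_{0}$ carries a unit $e_{0}$; since $e$ annihilates $\mathcal{B}_{0}$ and $\mathcal{B}_{1}=e\mathcal{B}_{1}$, the element $e+e_{0}$ is an idempotent and a left unit of $\mathcal{B}$, hence a unit by Corollary~\ref{ex=xe}. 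A unital shift associative algebra is commutative associative by Lemma~\ref{unital}, and a finite-dimensional semisimple commutative associative algebra over $\mathbb{F}$ is $\mathbb{F}^{n}$. Applying this with $\mathcal{B}=\mathcal{A}/\mathcal{R}$ gives $\mathcal{A}/\mathcal{R}\cong\mathbb{F}^{n}$, which is the desired semisimple part once we lift it.

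For (II), note that $\mathcal{R}$ is an ideal of the Jordan algebra $\mathcal{A}^{+}$ (because $\mathcal{A}\circ\mathcal{R}\subseteq\mathcal{A}\mathcal{R}+\mathcal{R}\mathcal{A}\subseteq\mathcal{R}$), nilpotent as a Jordan algebra by Corollary~\ref{nilp}, and $\mathcal{A}^{+}/\mathcal{R}=(\mathcal{A}/\mathcal{R})^{+}\cong\mathbb{F}^{n}$. Lifting the standard complete orthogonal idempotent system of $\mathbb{F}^{n}$ through the nilpotent ideal $\mathcal{R}$ --- possible by the usual power-associative/Jordan idempotent-lifting argument --- produces $e_{1},\dots,e_{n}\in\mathcal{A}$ with $e_{i}\circ e_{i}=e_{i}$, $e_{i}\circ e_{j}=0$ for $i\neq j$, and $\pi(e_{1}),\dots,\pi(e_{n})$ a basis of $\mathcal{A}/\mathcal{R}$. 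Each $e_{i}$ is then an idempotent of $\mathcal{A}$, so Corollary~\ref{ex=xe} forces $[e_{i},e_{j}]=0$, whence $e_{i}e_{j}=e_{i}\circ e_{j}=\delta_{ij}e_{i}$ already for the original product. Thus $\mathcal{S}:=\bigoplus_{i}\mathbb{F}e_{i}$ is a subalgebra of $\mathcal{A}$ isomorphic to $\mathbb{F}^{n}$, hence semisimple commutative associative, with $\mathcal{S}\cap\mathcal{R}=0$ and $\dim\mathcal{S}=n=\dim(\mathcal{A}/\mathcal{R})$, so $\mathcal{A}=\mathcal{S}\oplus\mathcal{R}$ as vector spaces, which is the assertion. (Alternatively, one first checks $\mathcal{R}ad(\mathcal{A}^{+})=\mathcal{R}$ --- the inclusion $\subseteq$ by mapping $\mathcal{R}ad(\mathcal{A}^{+})$ onto the semisimple Jordan algebra $(\mathcal{A}/\mathcal{R})^{+}$ --- and then invokes the Wedderburn principal theorem for Jordan algebras for $\mathcal{A}^{+}$, again using $e_{i}e_{j}=e_{i}\circ e_{j}$ to see the Jordan complement is closed under the original multiplication.)

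The hard part is step (I): ruling out, purely from identity~(\ref{nearly_eq}), any exotic semisimple shift associative algebra --- concretely, showing the half Peirce space of an idempotent vanishes and the zero Peirce space is a semisimple ideal. Step (II) is then essentially formal, the only point of care being that one lifts modulo the ideal $\mathcal{R}$ itself, which is what Corollary~\ref{nilp} directly gives, though it does coincide with $\mathcal{R}ad(\mathcal{A}^{+})$.
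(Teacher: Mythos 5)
Your proposal is correct, but it takes a genuinely different route from the paper. The paper's proof goes the other way around: it first invokes the Wedderburn principal theorem for the Jordan algebra $\mathcal{A}^{+}$ to produce a decomposition $\mathcal{A}=\mathcal{S}\oplus\mathcal{R}$ with $(\mathcal{S},\circ)$ a semisimple Jordan subalgebra and $(\mathcal{R},\circ)=\mathcal{R}ad(\mathcal{A}^{+})$, and then uses the $5$-nice property (Theorem~\ref{k-nice}) to show $[\mathcal{S},\mathcal{S}]=[\mathcal{S},\mathcal{R}]=0$ and identity~(\ref{swap}) (via $[a,b](c\circ d)=0$ and the orthogonal idempotents of $\mathcal{S}$) to show $[\mathcal{R},\mathcal{R}]\subseteq\mathcal{R}$, so that the Jordan decomposition is already compatible with the original product; the delicate point there is precisely that the Jordan radical is closed under the original multiplication. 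You instead start from $\mathcal{R}=\mathcal{R}ad(\mathcal{A})$, which is an ideal by definition, identify the quotient as $\mathbb{F}^{n}$ by an induction on Peirce decompositions, and lift the orthogonal idempotents through the nilpotent ideal; this sidesteps the $[\mathcal{R},\mathcal{R}]\subseteq\mathcal{R}$ issue entirely and avoids the full strength of the Jordan principal theorem, at the cost of the (standard, but only sketched) orthogonal-idempotent-lifting machinery. The Peirce facts you promise to verify in step (I) --- $\mathcal{B}_{1/2}=0$ and $\mathcal{B}_{0}$, $\mathcal{B}_{1}$ mutually annihilating ideals --- are exactly Theorem~\ref{decom} of the paper, whose proof depends only on Corollary~\ref{ex=xe} and Theorem~\ref{k-nice} and not on the present theorem, so there is no circularity; your route also yields as by-products the explicit form $\mathcal{S}=\bigoplus_{i}\mathbb{F}e_{i}$ and the isomorphism $\mathcal{A}/\mathcal{R}\cong\mathbb{F}^{n}$, which the paper obtains only at the end. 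Your closing parenthetical (checking $\mathcal{R}ad(\mathcal{A}^{+})=\mathcal{R}$ and then citing the Jordan principal theorem) is essentially the paper's argument.
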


\begin{proof}
Let $\mathcal{A}$ be a shift associative algebra. Then, by Lemma \ref{Jor-admiss}, $\mathcal{A}^{+}$ is
Jordan algebra. Since $\mathcal{A%
}^{+}$ is the algebra with the same underlying vector space as $\mathcal{A}$
but with $x\circ y=\frac{1}{2}\left( x  y+y  x\right) $ as the
multiplication, we may write $\mathcal{A}=\mathcal{S}\oplus \mathcal{R}$ where $\left( 
\mathcal{S},\circ \right) $ is a semisimple Jordan algebra and $\left( 
\mathcal{R},\circ \right) $\ is the radical of $\mathcal{A}^{+}$. Moreover, we have:%
\begin{longtable}{rclrcl}
$\mathcal{S}  \mathcal{S} $&$= $&$\mathcal{S}\circ \mathcal{S}+\left[ \mathcal{
S},\mathcal{S}\right],$ &  $\mathcal{S}  \mathcal{R}$&$ =$&$\mathcal{S}\circ \mathcal{R}+\left[ \mathcal{S},\mathcal{R}\right],$\\   
$\mathcal{R}  \mathcal{R} $&$=$&$\mathcal{R}\circ \mathcal{R}+\left[ \mathcal{R},\mathcal{R}\right],$&$\mathcal{R}  \mathcal{S} $&$=$&$\mathcal{R}\circ \mathcal{S}+\left[ \mathcal{R},\mathcal{S}\right].$
\end{longtable}
Since $\left( 
\mathcal{S},\circ \right) $ is a semisimple Jordan algebra, $\mathcal{ S}\circ\mathcal{ S}=\mathcal{S}$. Then, by Theorem \ref{k-nice}, we have%
    \begin{longtable}{rclrcl}
      $\left[ \mathcal{S},\mathcal{S}\right] $&$ =$&$\left[ \mathcal{S},\left( \mathcal{ S}\circ\mathcal{ S}\right) \circ \left( \mathcal{ S}\circ\mathcal{ S}\right) \right] =0,$   &  
      $\left[ \mathcal{S},\mathcal{R}\right] $&$ =$&$\left[ \left( \mathcal{ S}\circ\mathcal{ S}\right) \circ \left( \mathcal{ S}\circ\mathcal{ S}\right) ,\mathcal{R}\right]
=0.$   %\\  
%&&&$\left[ \mathcal{R},\mathcal{S}\right] $&$=$&$\left[ \mathcal{R},\left( \mathcal{ S}\circ\mathcal{ S}\right) \circ \left( \mathcal{ S}\circ\mathcal{ S}\right) \right] =0.$
    \end{longtable}
 
Hence, we have the following relations:
\begin{equation*}
    \begin{array}{ccc}
     \mathcal{S}   \mathcal{S}= \mathcal{S}\circ  \mathcal{S}= \mathcal{S},    & \mathcal{S}  \mathcal{R}=\mathcal{S}\circ \mathcal{R}\subseteq \mathcal{R},
         & \mathcal{R}  \mathcal{S}=\mathcal{R}\circ \mathcal{S}\subseteq \mathcal{R}.
    \end{array}
\end{equation*}
Then $\mathcal{S}$ is a semisimple Jordan algebra.
Any semisimple Jordan algebra has a unit element. So, by Lemma \ref{unital}, $\mathcal{S}$ is a semisimple commutative associative algebra. That is, $\mathcal{S}=%
\underset{i=1}{\overset{s}{\oplus }}\mathbb{F}e_{i}$ with $e_{i} 
e_{i}=e_{i}$ and $e_{i}  e_{j}=0$ if $i\neq j$.  Furthermore, we have
\begin{equation*}
\mathcal{R}  \mathcal{R}=\mathcal{R}\circ \mathcal{R}+\left[ \mathcal{R},%
\mathcal{R}\right] \subseteq \mathcal{R}+\left[ \mathcal{R},\mathcal{R}%
\right].
\end{equation*}%
We claim that $\left[ \mathcal{R},\mathcal{R}\right] \subseteq \mathcal{R}$.
To see this, let $x,y\in \mathcal{R}$ such that $\left[ x,y\right] =n+%
\overset{s}{\underset{i=1}{\sum }}\alpha _{i}e_{i}$ where $n\in \mathcal{R}$ and 
$\overset{s}{%
\underset{i=1}{\sum }}\alpha _{i}e_{i}\in \mathcal{S}$. Note that if $a,b,c,d\in \mathcal{A\textit{}}$, then by $\left(\ref{swap}\right) $, we
have 
\begin{equation*}
\left[ a,b\right]   \left( c\circ d\right) =(a  b)  \left(
c  d\right) +(a  b)  \left( d  c\right) -(b  a) 
\left( c  d\right) -(b  a)  \left( d  c\right) =0.
\end{equation*}
Thus, for each $j\in \{1,2,\ldots ,s\}$, we have%
\begin{equation*}
0=\left[ x,y\right]   \left( e_{j}\circ e_{j}\right)=\left[ x,y\right]   \left( e_{j}  e_{j}\right) =\left[ x,y\right]   e_{j}=\left( 
n+\overset{s}{\underset{i=1}{\sum }}\alpha _{i}e_{i}\right)   e_{j}=n  e_{j}+\alpha _{j}e_{j}=
n  e_{j}+\alpha _{j}e_{j} .
\end{equation*}%
Since $\mathcal{R}  \mathcal{S}\subseteq \mathcal{R}$, we have $n  e_{j}\in 
\mathcal{R}$ and therefore $n  e_{j}=\alpha _{j}e_{j}=0$. So $\alpha _{j}=0$ for $j=1,2,\ldots ,s$. Thus $\left[ \mathcal{R},%
\mathcal{R}\right] \subseteq \mathcal{R}$.
\end{proof}

Let $\left( \mathcal{A},\circ \right) $ be a Jordan algebra with idempotent
element $e\neq 0$. Then the Peirce decomposition of $\mathcal{A}$ relative to the
idempotent $e$ is 
\begin{equation*}
\mathcal{A}=\mathcal{A}_{0}\oplus \mathcal{A}_{\frac{1}{2}}\oplus \mathcal{A}%
_{1}
\end{equation*}%
where $\mathcal{A}_{i}=\mathcal{A}_{i}(e):=\{x\in \mathcal{A}:x\circ e=ix\}$%
, with $i\in \{0,\frac{1}{2},1\}$. The multiplication table for the
Peirce decomposition is: 
\begin{equation*}
\begin{array}{ccc}
\mathcal{A}_{0}\circ \mathcal{A}_{0}\subseteq \mathcal{A}_{0}, & \mathcal{A}%
_{\frac{1}{2}}\circ \mathcal{A}_{\frac{1}{2}}\subseteq \mathcal{A}_{0}\oplus 
\mathcal{A}_{1}, & \mathcal{A}_{1}\circ \mathcal{A}_{1}\subseteq \mathcal{A}%
_{1}, \\ 
\mathcal{A}_{0}\circ \mathcal{A}_{\frac{1}{2}}\subseteq \mathcal{A}_{\frac{1%
}{2}}, & \mathcal{A}_{0}\circ \mathcal{A}_{1}=0, & \mathcal{A}_{\frac{1}{2}%
}\circ \mathcal{A}_{1}\subseteq \mathcal{A}_{\frac{1}{2}}.%
\end{array}
\end{equation*}

\begin{theorem}
\label{decom}
Let $\mathcal{A}$ be a shift associative  algebra with an idempotent element $e\neq 0$%
. Then the Peirce decomposition of $\mathcal{A}$ relative to the idempotent $%
e$ is: 
\begin{equation*}
\mathcal{A}=\mathcal{A}_{0}\oplus \mathcal{A}_{1}
\end{equation*}%
where $\mathcal{A}_{i}=\mathcal{A}_{i}(e):=\{x\in \mathcal{A}:x 
e+e  x=2ix\}$, with $i\in \{0,1\}$. The multiplication table for
the Peirce decomposition is:%
\begin{equation*}
\begin{array}{cccc}
\mathcal{A}_{0}  \mathcal{A}_{0}\subseteq \mathcal{A}_{0}, & \mathcal{A}%
_{0}  \mathcal{A}_{1}=0, & \mathcal{A}_{1}  \mathcal{A}_{0}=0, & 
\mathcal{A}_{1}  \mathcal{A}_{1}\subseteq \mathcal{A}_{1}.%
\end{array}%
%\label{pierce}
\end{equation*}%
That is, $\mathcal{A}_{0},\mathcal{A}_{1}$ are ideals of $\mathcal{A}$.
\end{theorem}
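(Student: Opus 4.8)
The plan is to reduce the statement to the Peirce decomposition of the Jordan algebra $(\mathcal{A},\circ)$ relative to $e$, using Corollary~\ref{ex=xe} to identify left and right multiplication by $e$. First I would note that, by Corollary~\ref{ex=xe}, $ex=xe$ for every $x\in\mathcal{A}$, so $x\circ e=\frac12(xe+ex)=ex=xe$; hence the condition $xe+ex=2ix$ defining the theorem's $\mathcal{A}_i$ is precisely the Jordan Peirce condition $x\circ e=ix$. Since $e\circ e=e$, Theorem~\ref{Jor-admiss} together with the Peirce decomposition and multiplication table recalled above give $\mathcal{A}=\mathcal{A}_0\oplus\mathcal{A}_{1/2}\oplus\mathcal{A}_1$. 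Note also that $a\in\mathcal{A}_0$ forces $ea=ae=0$ and $b\in\mathcal{A}_1$ forces $eb=be=b$.

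The first substantive step is to show $\mathcal{A}_{1/2}=0$. For $x\in\mathcal{A}_{1/2}$ one has $ex=xe=\frac12 x$, and applying \eqref{nearly_eq} to the triple $(e,x,e)$ yields $(ex)e=x(ee)=xe$; the left side equals $\frac14 x$ and the right side equals $\frac12 x$, so $x=0$ because $\mathbb{F}$ has characteristic $\neq 2$. Therefore $\mathcal{A}=\mathcal{A}_0\oplus\mathcal{A}_1$.

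Next I would treat the products. For $a\in\mathcal{A}_0$ and $b\in\mathcal{A}_1$, applying \eqref{nearly_eq} to $(a,b,e)$ and to $(b,a,e)$ gives $(ab)e=b(ea)=0$ and $(ba)e=a(eb)=ab$; since $\mathcal{A}_0\circ\mathcal{A}_1=0$ we have $ab=-ba$, and combining these identities gives $0=(ab)e=-(ba)e=-ab$, so $ab=ba=0$. For $a,a'\in\mathcal{A}_0$, applying \eqref{nearly_eq} to $(a,a',e)$ gives $(aa')e=a'(ea)=0$, and since $e(aa')=(aa')e$ we obtain $(aa')\circ e=0$, that is, $aa'\in\mathcal{A}_0$.

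The inclusion $\mathcal{A}_1\mathcal{A}_1\subseteq\mathcal{A}_1$ is where the Jordan data alone is not enough, and I expect it to be the main obstacle; the idea is to show that the commutator vanishes. For $b,b'\in\mathcal{A}_1$, applying \eqref{nearly_eq} to $(b,b',e)$ and $(b',b,e)$ gives $(bb')e=b'(eb)=b'b$ and $(b'b)e=b(eb')=bb'$, so $z:=[b,b']$ satisfies $ze=\frac12(b'b-bb')=-z$ and hence $z\circ e=\frac12(ze+ez)=-z$. Writing $z=z_0+z_1$ with $z_i\in\mathcal{A}_i$, this reads $z_1=z\circ e=-z_0-z_1$, so $2z_1+z_0=0$ in the direct sum, forcing $z_0=z_1=0$; thus $bb'=b'b=b\circ b'\in\mathcal{A}_1$. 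Finally, from $\mathcal{A}_0\mathcal{A}_1=\mathcal{A}_1\mathcal{A}_0=0$ together with $\mathcal{A}_i\mathcal{A}_i\subseteq\mathcal{A}_i$ one reads off $\mathcal{A}\,\mathcal{A}_i\subseteq\mathcal{A}_i$ and $\mathcal{A}_i\,\mathcal{A}\subseteq\mathcal{A}_i$, so $\mathcal{A}_0$ and $\mathcal{A}_1$ are ideals, which completes the proof.
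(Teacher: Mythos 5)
Your proof is correct, and while it shares the paper's overall skeleton (pass to the Jordan algebra $(\mathcal{A},\circ)$, take its Peirce decomposition, kill $\mathcal{A}_{1/2}$, then establish the multiplication table), the second half is executed by a genuinely different and more elementary route. The paper proves $\mathcal{A}_{1/2}=0$ by multiplying $x=2ex$ on the left by $e$ and using $(ee)x=e(xe)$, essentially the same one-line use of \eqref{nearly_eq} as your computation $(ex)e=x(ee)$. But for the multiplication table the paper leans on Theorem \ref{k-nice} (the $5$-nice property): it writes $e$ as a product of four or five copies of itself and freely reassociates to show directly that $e(xy)+(xy)e=0$ (resp. $=2xy$) for $x\in\mathcal{A}_0$ (resp. $\mathcal{A}_1$) and \emph{arbitrary} $y\in\mathcal{A}$, so that each $\mathcal{A}_i$ is an ideal in one stroke, and only then deduces $\mathcal{A}_0\mathcal{A}_1=\mathcal{A}_1\mathcal{A}_0=0$ from $\mathcal{A}_0\cap\mathcal{A}_1=0$. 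You instead treat the four component products separately using only single applications of the degree-$3$ identity \eqref{nearly_eq} together with the Jordan Peirce table ($\mathcal{A}_0\circ\mathcal{A}_1=0$, $\mathcal{A}_1\circ\mathcal{A}_1\subseteq\mathcal{A}_1$) and Corollary \ref{ex=xe}, with a commutator argument ($z\circ e=-z$ forces $z=0$) to handle $\mathcal{A}_1\mathcal{A}_1$. What your route buys is independence from the $5$-nice theorem, whose proof in the paper is only sketched via free-algebra computations; what the paper's route buys is uniformity, since the ideal property $\mathcal{A}_i\mathcal{A}+\mathcal{A}\mathcal{A}_i\subseteq\mathcal{A}_i$ is obtained for an arbitrary second factor without first decomposing it. Both arguments are complete; all of your individual steps (the substitutions into $(xy)z=y(zx)$, the use of $ab=-ba$ from $\mathcal{A}_0\circ\mathcal{A}_1=0$, and the final direct-sum bookkeeping) check out over a field of characteristic different from $2$.
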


\begin{proof}
Let $\mathcal{A}$ be a shift associative  algebra with an idempotent element $e$%
. It is clear that an idempotent of $\mathcal{A}$ is also an idempotent of $%
\mathcal{A}^{+}$. Since $\mathcal{A%
}^{+}$ is the Jordan algebra with the same underlying vector space as $\mathcal{A}$
but with $x\circ y=\frac{1}{2}\left( x  y+y  x\right) $ as the
multiplication, $\mathcal{A}=\mathcal{A}_{0}\oplus \mathcal{A}_{\frac{1}{2}%
}\oplus \mathcal{A}_{1}$ where $\mathcal{A}_{i}=\mathcal{A}_{i}(e):=\{x\in 
\mathcal{A}:x\circ e=\frac{1}{2}\left( x  e+e  x\right) =ix\}$, with 
$i\in \{0,\frac{1}{2},1\}$.

\medskip
First we claim that $\mathcal{A}_{\frac{1}{2}}=0$.
To see this, let $x\in \mathcal{A}_{\frac{1}{2}}$. Then $x  e+e  x=x$%
. By Corollary \ref{ex=xe},
we have  $e  x=x  e$ and therefore $x=2e  x$. Multiplying the
last identity by $e$ on the left, we get $e  x=2e  \left( e 
x\right) $. But, by $\left( \ref{nearly_eq}\right) $, we have $e  x=\left( e  e\right)  
x=e  \left( x  e\right) =e  \left( e  x\right) $ (also, by Theorem \ref{k-nice}, $e  x=\left( \left( e  e\right)  \left( e  e\right) \right)  
x=\left( \left( e  e\right)   e\right)   \left( e  x\right)
=e  \left( e  x\right) $). Thus $%
e  x=0$ and so $x=0$.

\medskip
Next, let $x\in \mathcal{A}_{0}$ and $y\in \mathcal{A}$. 
Then $e  x+x  e=0$. So, by Theorem \ref{k-nice}, we have
\begin{longtable}{lclclcl}
$e  \left( x  y\right) +\left( x  y\right)   e$&$ =$&$\left(
\left( e  e\right)   e\right)   \left( x  y\right) +\left(
x  y\right)   \left( \left( e  e\right)   e\right)
$&$=$&$\left( \left( \left( e  x+x  e\right)   e\right)  
e\right)   y 
$&$=$&$0,$ \\
$e  \left( y  x\right) +\left( y  x\right)   e $&$=$&$\left(
\left( e  e\right)   e\right)   \left( y  x\right) +\left(
y  x\right)   \left( \left( e  e\right)   e\right) 
$&$=$&$\left( \left( \left( e  x+x  e\right)   e\right)  
e\right)   y 
$&$=$&$0.$
\end{longtable}%
It follows that $\mathcal{A}_{0}  \mathcal{A}+\mathcal{A}  \mathcal{A}_{0}\subseteq \mathcal{A}_{0}$. This proves that $\mathcal{A}_{0}$ is an ideal of $\mathcal{A}$. 

\medskip
Finally, let $x\in \mathcal{A}_{1}$ and $y\in \mathcal{A}$. Then $e 
x+x  e=2x$. Hence, by Theorem \ref{k-nice}, we have
\begin{longtable}{lclclclc}
$e  \left( x  y\right) +\left( x  y\right)   e $&$=$&$\left(
\left( e  e\right)   \left( e  e\right) \right)   \left(
x  y\right) +\left( x  y\right)   \left( \left( e  e\right)
  \left( e  e\right) \right)  $&$
=$\\
&$=$&$\left( \left( \left( e  e\right)   \left( e  e\right) \right)
  x\right)   y+\left( x  \left( \left( e  e\right)  
\left( e  e\right) \right) \right)   y $&$=$&$\left( e  x+x  e\right)   y $&$=$&$ 2x  y$
\end{longtable}%
and
\begin{longtable}{lclclclc}
$e  \left( y  x\right) +\left( y  x\right)   e $&$=$&$\left(
\left( e  e\right)   \left( e  e\right) \right)   \left(
y  x\right) +\left( y  x\right)   \left( \left( e  e\right)
  \left( e  e\right) \right)  $&$=$\\
  &$=$&$y\left( \left( \left( e  e\right)   \left( e  e\right) \right)
  x\right)   +y\left( x  \left( \left( e  e\right)  
\left( e  e\right) \right) \right)   $&$=$&$y\left( e  x+x  e\right)    $&$=$&$2yx.$
\end{longtable}%

Thus $\mathcal{A}_{1}  \mathcal{A}+\mathcal{A}  \mathcal{A}_{1}\subseteq \mathcal{A}_{1}$, so $\mathcal{A}_{1}$ is an ideal of $\mathcal{A}$. Since $\mathcal{A}_{0}\cap \mathcal{A}_{1}=0$, we have $\mathcal{A}_{0}  \mathcal{A}_{1}=\mathcal{A}_{1}  \mathcal{A}_{0}=0$.
\end{proof}

\begin{definition}\label{decomposable}\rm
An algebra $\mathcal A$ is  {decomposable} if there are non-zero ideals $I_1$ and $I_2$ such that $\mathcal A = I_1 \oplus I_2$. Otherwise, it is  {indecomposable}.
\end{definition}

\begin{lemma}
  \label{unique idempoten}
Let $\mathcal{A}$ be a non-nilpotent shift associative algebra. If $\mathcal{A}$ is indecomposable, then $\mathcal{A}$ is unital and has a unique idempotent, that is, $e=1_{\mathcal{A}}$.  
\end{lemma}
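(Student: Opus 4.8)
The plan is to exploit the Peirce decomposition from Theorem~\ref{decom} together with Corollary~\ref{idem}. Since $\mathcal{A}$ is non-nilpotent (equivalently, by the corollaries preceding Theorem~\ref{sum}, not a nilalgebra), Corollary~\ref{idem} gives an idempotent $e \neq 0$. By Theorem~\ref{decom} we get a decomposition $\mathcal{A} = \mathcal{A}_0 \oplus \mathcal{A}_1$ into ideals, where $\mathcal{A}_1 = \mathcal{A}_1(e)$ and $e \in \mathcal{A}_1$. Indecomposability forces one summand to vanish, and since $e \in \mathcal{A}_1$ is nonzero we must have $\mathcal{A}_0 = 0$, i.e. $\mathcal{A} = \mathcal{A}_1$.

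Next I would show $e$ is a unit. By definition of $\mathcal{A}_1(e)$ every $x \in \mathcal{A}$ satisfies $ex + xe = 2x$, and by Corollary~\ref{ex=xe} we have $ex = xe$, hence $ex = xe = x$ for all $x$. Thus $e = 1_{\mathcal{A}}$ and $\mathcal{A}$ is unital. By Lemma~\ref{unital}, $\mathcal{A}$ is then commutative associative.

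Finally, uniqueness of the idempotent. Suppose $f \in \mathcal{A}$ is another idempotent, $f^2 = f$. Since $\mathcal{A}$ is now a (commutative associative) unital algebra, consider $e - f = 1 - f$; a standard argument is to note that in any unital commutative associative algebra the idempotents correspond to a direct product decomposition $\mathcal{A} \cong \mathcal{A}f \times \mathcal{A}(1-f)$. If $f \neq 0$ and $f \neq 1$, both factors are nonzero ideals and $\mathcal{A}$ is decomposable, contradicting indecomposability; hence $f \in \{0, 1\}$. Equivalently, one can argue directly: if $f$ is a nonzero idempotent, then by the above $\mathcal{A}$ is unital with unit $f$ as well, and units are unique, so $f = e = 1_{\mathcal{A}}$.

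The only mild subtlety — and the step I would be most careful with — is making sure the Peirce setup really applies: that $e$ lands in $\mathcal{A}_1$ and that "indecomposable" in Definition~\ref{decomposable} (ideals $I_1 \oplus I_2$ with both nonzero) matches the dichotomy $\mathcal{A}_0 = 0$ or $\mathcal{A}_1 = 0$ coming from Theorem~\ref{decom}. Both are immediate: $e \circ e = e$ puts $e \in \mathcal{A}_1$, and Theorem~\ref{decom} states $\mathcal{A}_0, \mathcal{A}_1$ are ideals with $\mathcal{A} = \mathcal{A}_0 \oplus \mathcal{A}_1$, so indecomposability gives exactly the needed trichotomy collapse. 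Everything after that is routine application of Lemma~\ref{unital} and Corollary~\ref{ex=xe}.
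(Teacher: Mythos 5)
Your proposal is correct and follows essentially the same route as the paper's proof: Corollary~\ref{idem} supplies a nonzero idempotent, Theorem~\ref{decom} gives $\mathcal{A}=\mathcal{A}_0\oplus\mathcal{A}_1$ as ideals, indecomposability kills $\mathcal{A}_0$ since $e\in\mathcal{A}_1\neq 0$, and then $ex+xe=2x$ together with Corollary~\ref{ex=xe} yields $ex=xe=x$, so $e=1_{\mathcal{A}}$. Your uniqueness argument (any nonzero idempotent $f$ would, by the same reasoning, be a unit, and units are unique) matches the paper's, which observes $\mathcal{A}_1(e')=\mathcal{A}$ for any idempotent $e'$ and concludes $e'=1_{\mathcal{A}}$.
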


\begin{proof}
By Corollary \ref{idem} there exists a non-zero idempotent $e\in \mathcal{A}$ and, by
Theorem \ref{decom}, is $\mathcal{A}=\mathcal{A}_0 \oplus \mathcal{A}_1$, direct sum of ideals of $\mathcal{A}$. Since $\mathcal{A}$ is indecomposable and $e \in \mathcal{A}_1 \neq 0$, it follows that $\mathcal{A}_0=0$ and $\mathcal{A}=\mathcal{A}_1$. Now, let $x\in \mathcal  A=\mathcal 
 A_{1}$. Then $e  x+x  e =2x$. By Corollary \ref{ex=xe}, we have  $e  x=x  e$ and therefore $e  x=x=x   e$. So $e=1_{\mathcal{A}}$. Suppose that $e'$ is an idempotent of $\mathcal{A}$.
Then $\mathcal{A}_1(e=1_{\mathcal{A}}) = \mathcal{A} = \mathcal{A}_1(e')$ and so $e'=1_{\mathcal{A}}$.
\end{proof}

\begin{proposition}
\label{simple=K}
Let $\mathcal{A}$ be a shift associative algebra. If $\mathcal{A}$ is simple then $\mathcal{A} \cong \mathbb{F}$.
\end{proposition}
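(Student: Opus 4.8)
The plan is to deduce the statement from the Wedderburn--Artin decomposition (Theorem~\ref{sum}) together with the structure of semisimple commutative associative algebras over an algebraically closed field. First I would observe that $\mathcal{A}^{2}$ is always an ideal of $\mathcal{A}$ (trivially $\mathcal{A}\mathcal{A}^{2}\subseteq\mathcal{A}^{2}$ and $\mathcal{A}^{2}\mathcal{A}\subseteq\mathcal{A}^{2}$), so simplicity of $\mathcal{A}$ forces $\mathcal{A}^{2}=\mathcal{A}$; in particular $\mathcal{A}$ is not nilpotent, hence, by the equivalence of nilpotency, solvability, and being a nilalgebra for finite-dimensional shift associative algebras established above, $\mathcal{A}$ is not a nilalgebra.

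Next I would invoke Theorem~\ref{sum} to write $\mathcal{A}=\mathcal{S}\oplus\mathcal{R}$ with $\mathcal{S}$ a semisimple commutative associative algebra and $\mathcal{R}=\mathcal{R}ad(\mathcal{A})$. The radical $\mathcal{R}$ is an ideal of $\mathcal{A}$; if $\mathcal{R}=\mathcal{A}$ then $\mathcal{A}$ would be nilpotent, contradicting $\mathcal{A}^{2}=\mathcal{A}$, so by simplicity $\mathcal{R}=0$ and $\mathcal{A}=\mathcal{S}$ is a semisimple commutative associative algebra. By the structure exhibited in the proof of Theorem~\ref{sum}, $\mathcal{A}=\bigoplus_{i=1}^{s}\mathbb{F}e_{i}$ with orthogonal idempotents $e_{i}$. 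Each $\mathbb{F}e_{i}$ is a nonzero ideal of $\mathcal{A}$, so simplicity forces $s=1$ and $\mathcal{A}=\mathbb{F}e_{1}\cong\mathbb{F}$.

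Alternatively, and essentially equivalently, one can argue through idempotents directly: since $\mathcal{A}$ is non-nilpotent, Corollary~\ref{idem} yields a nonzero idempotent $e$; the Peirce decomposition $\mathcal{A}=\mathcal{A}_{0}\oplus\mathcal{A}_{1}$ of Theorem~\ref{decom} consists of ideals, and since $e\in\mathcal{A}_{1}$, simplicity gives $\mathcal{A}=\mathcal{A}_{1}$; a simple algebra is indecomposable, so Lemma~\ref{unique idempoten} shows $\mathcal{A}$ is unital with $e=1_{\mathcal{A}}$, whence Lemma~\ref{unital} makes $\mathcal{A}$ commutative associative, and a simple finite-dimensional commutative associative algebra over an algebraically closed field is $\mathbb{F}$.

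There is no serious obstacle here: all the real content sits in the structure theorems already proved, and the only point requiring minor care is ruling out the nilpotent/radical case, which is handled by noting $\mathcal{A}^{2}=\mathcal{A}$. The one convention to pin down is that \emph{simple} is taken to include $\mathcal{A}^{2}\neq 0$, which is exactly what makes the conclusion $\mathcal{A}\cong\mathbb{F}$ (rather than the one-dimensional zero algebra) correct.
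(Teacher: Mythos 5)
Your proposal is correct, and your ``alternative'' argument is in fact exactly the paper's own proof: the authors simply invoke Lemma~\ref{unique idempoten} (simple $\Rightarrow$ indecomposable and, since $\mathcal{A}^{2}=\mathcal{A}$, non-nilpotent, hence unital) and then Lemma~\ref{unital} (unital $\Rightarrow$ commutative associative), concluding with the classical fact that a simple commutative associative algebra over $\mathbb{F}$ algebraically closed is $\mathbb{F}$ itself. Your primary route through Theorem~\ref{sum} is a legitimate variant: it trades the idempotent/Peirce machinery of Theorem~\ref{decom} and Lemma~\ref{unique idempoten} for the Wedderburn--Artin decomposition $\mathcal{A}=\mathcal{S}\oplus\mathcal{R}$, killing $\mathcal{R}$ by simplicity and then reading off $s=1$ from $\mathcal{S}=\bigoplus_{i=1}^{s}\mathbb{F}e_{i}$. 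Both arguments rest on the same underlying structure theory, so neither is more elementary; the Wedderburn--Artin version is marginally more self-contained in that it does not need the uniqueness-of-idempotent lemma, while the paper's version is shorter. Your explicit observations that $\mathcal{A}^{2}=\mathcal{A}$ rules out nilpotency and that the convention $\mathcal{A}^{2}\neq 0$ is what excludes the one-dimensional zero algebra are points the paper leaves implicit, and they are worth having spelled out.
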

\begin{proof}
By Lemma \ref{unique idempoten}, $\mathcal{A}$ is an unital algebra. Moreover, by Lemma \ref{unital}, $\mathcal{A}$ is a simple commutative associative algebra and so $\mathcal{A} \cong \mathbb{F}$.
\end{proof}

\begin{proposition}
\label{class}
Let $\mathcal{A}$ be a shift associative algebra. If $\mathcal{A}$
is indecomposable, then either:
\begin{itemize}
\item[i.] $\mathcal{A}$ is an indecomposable nilpotent
shift associative  algebra or

\item[ii.] $\mathcal{A}$ is the unital hull of a nilpotent commutative associative algebra $B$ (i.e. 
$\mathcal{A} = \widehat{B} = \mathbb{F}1_{\mathcal{A}}\oplus B$, so $\mathcal{A}$ is a local unital commutative associative algebra).
\end{itemize}
\end{proposition}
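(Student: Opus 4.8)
The plan is to combine the Wedderburn–Artin decomposition (Theorem~\ref{sum}) with the Peirce decomposition relative to an idempotent (Theorem~\ref{decom}) and the indecomposability hypothesis. First I would write $\mathcal{A} = \mathcal{S} \oplus \mathcal{R}$ as in Theorem~\ref{sum}, where $\mathcal{S}$ is a semisimple commutative associative algebra and $\mathcal{R} = \mathcal{R}ad(\mathcal{A})$. If $\mathcal{S} = 0$, then $\mathcal{A} = \mathcal{R}$ is nilpotent; since $\mathcal{A}$ is assumed indecomposable, we land in case (i) and are done. So the substantive case is $\mathcal{S} \neq 0$, where we must show $\mathcal{S}$ is one-dimensional and that $\mathcal{A}$ is the unital hull of $\mathcal{R}$.

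Assuming $\mathcal{S} \neq 0$, the algebra $\mathcal{A}$ is not a nilalgebra, so by Corollary~\ref{idem} it has a nonzero idempotent $e$. By Lemma~\ref{unique idempoten}, indecomposability forces $\mathcal{A}$ to be unital with $e = 1_{\mathcal{A}}$ the unique idempotent. Now $\mathcal{S} = \bigoplus_{i=1}^{s}\mathbb{F}e_i$ with orthogonal idempotents $e_i$ (from the proof of Theorem~\ref{sum}); uniqueness of the idempotent in $\mathcal{A}$ forces $s = 1$, so $\mathcal{S} = \mathbb{F}e = \mathbb{F}1_{\mathcal{A}}$. Hence $\mathcal{A} = \mathbb{F}1_{\mathcal{A}} \oplus \mathcal{R}$ as vector spaces, and it remains to identify the algebra structure: since $\mathcal{A}$ is unital shift associative, Lemma~\ref{unital} gives that $\mathcal{A}$ is commutative associative, and therefore so is its ideal $\mathcal{R}$, which is nilpotent by construction. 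Thus $\mathcal{A} = \widehat{\mathcal{R}}$ is the unital hull of the nilpotent commutative associative algebra $B := \mathcal{R}$, i.e.\ a local unital commutative associative algebra, which is case (ii).

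The main obstacle is the passage from ``$\mathcal{A}$ has a unique idempotent'' to ``$\mathcal{S}$ is one-dimensional'': one must be careful that the primitive idempotents $e_i$ decomposing $\mathcal{S}$ really are idempotents of $\mathcal{A}$ itself (they are, since the product on $\mathcal{S}$ agrees with that on $\mathcal{A}$ by the relations $\mathcal{S}\mathcal{S} = \mathcal{S}\circ\mathcal{S}$ in the proof of Theorem~\ref{sum}), so that Lemma~\ref{unique idempoten} applies and collapses the sum. Everything else — the dichotomy on whether $\mathcal{S}$ vanishes, and the final identification via Lemma~\ref{unital} — is routine bookkeeping once this point is settled.
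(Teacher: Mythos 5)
Your proposal is correct and follows essentially the same route as the paper's own proof: decompose via Theorem~\ref{sum}, dispose of the case $\mathcal{S}=0$, then use Lemma~\ref{unique idempoten} and Lemma~\ref{unital} together with uniqueness of the idempotent to force $\mathcal{S}=\mathbb{F}1_{\mathcal{A}}$ and conclude that $\mathcal{R}ad(\mathcal{A})$ is nilpotent commutative associative. Your extra care in checking that the primitive idempotents of $\mathcal{S}$ are idempotents of $\mathcal{A}$ itself is a valid refinement of the same argument, not a different approach.
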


\begin{proof}
By Theorem \ref{sum}, $\mathcal{A} = \mathcal{S} \oplus \mathcal{R}ad(
\mathcal{A})$. If $\mathcal  S=0$ then $\mathcal{A} = \mathcal{R}ad(
\mathcal{A})$ is nilpotent. Otherwise, $\mathcal{A}$ is not
nilpotent. By Lemmas \ref{unique idempoten} and \ref{unital}, $\mathcal{A}$ is a unital commutative associative algebra with a unique
idempotent $e=1_{\mathcal{A}}$. If $\mathcal{R}ad( \mathcal{A})$ is
non-associative, then so is $\mathcal{A}$. Thus, $\mathcal{R}ad( 
\mathcal{A})$ is a nilpotent commutative associative algebra. By
Proposition \ref{simple=K} and since $\mathcal{A}$ has a unique idempotent $%
e=1_{\mathcal{A}}$, $\mathcal{S}=\mathbb{F}1_{\mathcal{A}}$ and hence $\mathcal{A}=\mathbb{F}1_{\mathcal{A}}\oplus 
\mathcal{R}ad( \mathcal{A})$.
\end{proof}

 Obviously, by Definition \ref{decomposable} and Proposition \ref{class}, we may obtain the classification of all shift associative algebras from the classification of nilpotent shift associative algebras (note that one can adapt the Skjelbred-Sund method, used to classify nilpotent Lie algebras, to classify nilpotent shift associative algebras). Another approach to obtain the classification of shift associative algebras is given in the next section.    

\section{Low dimensional shift associative  algebras}

Throughout this section, we assume that we are workingover the complex field.

\subsection{The algebraic classification of  shift associative  algebras}
The aim of this section is to improve \cite[Theorem 3.2]{Sergei} and give a complete classification of all shift associative algebras of dimensions up to four.

\medskip
The problem considered now is to find, up-to
isomorphism, all shift associative  algebras that have a given nilpotent Lie algebra
as associated Lie algebra.
\medskip
Let $\left( \mathcal{L},\left[ -,-\right] \right) $ be a nilpotent Lie algebra. Define 
$\mathrm{Z}_{\mathrm{SA}}^{2}\left( \mathcal{L},\mathcal{L}\right) $ to be
the set of all symmetric bilinear maps $\theta :\mathcal{L}\times \mathcal{L}%
\longrightarrow \mathcal{L}$ such that:%
\begin{flushleft}
$\theta \left( x,\theta \left( y,z\right) \right) +\left[ x,\theta \left(
y,z\right) \right] +\theta \left( x,\left[ y,z\right] \right) +\left[ x,%
\left[ y,z\right] \right] \ =$\end{flushleft}
\begin{flushright}$= \ \theta \left( \theta \left( z,x\right) ,y\right) +%
\left[ \theta \left( z,x\right) ,y\right] +\theta \left( \left[ z,x\right]
,y\right) +\left[ \left[ z,x\right] ,y\right],$\end{flushright} 
for all $x,y,z$ in $\mathcal{L}$.

\begin{lemma}
Let $\mathcal{L}_{1},\mathcal{L}_{2}$ be two isomorphic nilpotent Lie algebras.
Then $\mathrm{Z}_{\mathrm{SA}%
}^{2}\left( \mathcal{L}_{1},\mathcal{L}_{1}\right) \neq \varnothing $ if and only if $\mathrm{Z}_{%
\mathrm{SA}}^{2}\left( \mathcal{L}_{2},\mathcal{L}_{2}\right)\neq \varnothing $. Moreover, there is a bijective correspondence between $\mathrm{Z}_{\mathrm{SA}%
}^{2}\left( \mathcal{L}_{1},\mathcal{L}_{1}\right) $\ and $\mathrm{Z}_{%
\mathrm{SA}}^{2}\left( \mathcal{L}_{2},\mathcal{L}_{2}\right) $.
\end{lemma}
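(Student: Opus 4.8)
The plan is to show that an isomorphism $\varphi:\mathcal{L}_1 \to \mathcal{L}_2$ of Lie algebras induces a bijection $\varphi_*:\mathrm{Z}_{\mathrm{SA}}^2(\mathcal{L}_1,\mathcal{L}_1) \to \mathrm{Z}_{\mathrm{SA}}^2(\mathcal{L}_2,\mathcal{L}_2)$ by the transport-of-structure formula
\[
(\varphi_*\theta)(u,v) \ :=\ \varphi\bigl(\theta(\varphi^{-1}(u),\varphi^{-1}(v))\bigr),\qquad u,v\in\mathcal{L}_2.
\]
First I would record that $\varphi_*\theta$ is again a symmetric bilinear map $\mathcal{L}_2\times\mathcal{L}_2\to\mathcal{L}_2$, which is immediate since $\varphi$ and $\varphi^{-1}$ are linear bijections and $\theta$ is symmetric bilinear.

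The substantive step is to verify that $\varphi_*\theta$ satisfies the defining cocycle-type identity of $\mathrm{Z}_{\mathrm{SA}}^2(\mathcal{L}_2,\mathcal{L}_2)$ whenever $\theta$ satisfies the corresponding identity for $\mathcal{L}_1$. I would write $x = \varphi^{-1}(u)$, $y = \varphi^{-1}(v)$, $z = \varphi^{-1}(w)$ for arbitrary $u,v,w\in\mathcal{L}_2$, and then apply $\varphi$ to the identity for $\theta$ evaluated at $(x,y,z)$. Because $\varphi$ is a Lie homomorphism we have $\varphi([a,b]) = [\varphi(a),\varphi(b)]$ for all $a,b\in\mathcal{L}_1$, and by definition $\varphi(\theta(a,b)) = (\varphi_*\theta)(\varphi(a),\varphi(b))$. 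Pushing $\varphi$ through each of the four terms on each side of the identity — $\theta(x,\theta(y,z))$, $[x,\theta(y,z)]$, $\theta(x,[y,z])$, $[x,[y,z]]$ and their mirror-image partners involving $[z,x]$ and $\theta(z,x)$ — turns the $\mathcal{L}_1$-identity at $(x,y,z)$ into exactly the $\mathcal{L}_2$-identity for $\varphi_*\theta$ at $(u,v,w)$. The only point requiring a little care is that nested expressions like $\theta(x,\theta(y,z))$ transform correctly: $\varphi(\theta(x,\theta(y,z))) = (\varphi_*\theta)(u, \varphi(\theta(y,z))) = (\varphi_*\theta)(u,(\varphi_*\theta)(v,w))$, and similarly for the mixed terms $[x,\theta(y,z)]$ and $\theta([z,x],y)$; each is handled by one application of the homomorphism property of $\varphi$ followed by the definition of $\varphi_*\theta$.

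This shows $\varphi_*$ maps $\mathrm{Z}_{\mathrm{SA}}^2(\mathcal{L}_1,\mathcal{L}_1)$ into $\mathrm{Z}_{\mathrm{SA}}^2(\mathcal{L}_2,\mathcal{L}_2)$; applying the same construction to $\varphi^{-1}$ gives a map in the other direction, and the two are mutually inverse since $(\varphi^{-1})_*\circ\varphi_* = \mathrm{id}$ by a direct unravelling of the definitions. Hence $\varphi_*$ is a bijection, which in particular forces $\mathrm{Z}_{\mathrm{SA}}^2(\mathcal{L}_1,\mathcal{L}_1)\neq\varnothing$ if and only if $\mathrm{Z}_{\mathrm{SA}}^2(\mathcal{L}_2,\mathcal{L}_2)\neq\varnothing$. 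I do not expect any real obstacle here; the proof is a routine transport-of-structure argument, and the only thing to be careful about is bookkeeping the nested $\theta$-terms so that every occurrence of $\varphi$ is absorbed into the definition of $\varphi_*\theta$.
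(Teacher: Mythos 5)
Your proposal is correct and follows essentially the same transport-of-structure argument as the paper: the paper defines $(\theta\ast\phi)(x,y)=\phi^{-1}(\theta(\phi(x),\phi(y)))$ for an isomorphism $\phi:\mathcal{L}_2\to\mathcal{L}_1$, which is exactly your $\varphi_*$ with $\phi=\varphi^{-1}$, and asserts the same bijectivity. Your write-up actually supplies the term-by-term verification of the cocycle identity that the paper leaves implicit, and that verification is accurate.
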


\begin{proof}
Consider an isomorphism $\phi :\mathcal{L}_{2}\longrightarrow \mathcal{L}_{1}
$. We can prove $\theta \in \mathrm{Z}_{\mathrm{SA}}^{2}\left( \mathcal{L}%
_{1},\mathcal{L}_{1}\right) $ if and only if $\theta \ast \phi \in \mathrm{Z}%
_{\mathrm{SA}}^{2}\left( \mathcal{L}_{2},\mathcal{L}_{2}\right) $, where $%
\theta \ast \phi :\mathcal{L}_{2}\mathcal{\times \mathcal{L}}_{2}\mathcal{%
\longrightarrow L}_{2}$ is defined by $\left( \theta \ast \phi \right)
\left( x,y\right) =\phi ^{-1}\left( \theta \left( \phi \left( x\right) ,\phi
\left( y\right) \right) \right) $, for $x,y\in \mathcal{L}_{2}$. This
defines a bijective map $\tau :\mathrm{Z}_{\mathrm{SA}}^{2}\left( \mathcal{L}%
_{1},\mathcal{L}_{1}\right) \longrightarrow \mathrm{Z}_{\mathrm{SA}%
}^{2}\left( \mathcal{L}_{2},\mathcal{L}_{2}\right) $ by $\tau \left( \theta
\right) :=\theta \ast \phi $.
\end{proof}

If $\mathrm{Z}_{\mathrm{SA}}^{2}\left( \mathcal{L},\mathcal{L}%
\right) = \mathcal{\varnothing }$, then $\mathcal{L}$ does not have shift associative
structures, i.e., there is no shift associative  algebra $\mathcal{A}$ such that  $ \mathcal{A}^{-}\cong \mathcal{L}$. On the other hand in case of $\mathrm{Z}_{\mathrm{SA}}^{2}\left( \mathcal{L},\mathcal{L}%
\right) \neq \mathcal{\varnothing }$, observe that, for $\theta \in \mathrm{Z}_{\mathrm{SA}}^{2}\left( \mathcal{L},\mathcal{L}\right) $, if we define a
multiplication $ \cdot _{\theta }$ on $\mathcal{L}$ by $x  \cdot_{\theta }y=\theta
\left( x,y\right) +\left[ x,y\right] $ for all $x,y$ in $\mathcal{L}$, then $%
\left( \mathcal{L},  \cdot_{\theta }\right) $ is a shift associative
algebra. Conversely, if $\left( \mathcal{L}, \cdot \right) $ is a shift 
associative algebra, then there exists $\theta \in \mathrm{Z}_{\mathrm{SA}%
}^{2}\left( \mathcal{L},\mathcal{L}\right) $ such that $\left( \mathcal{L}%
, \cdot _{\theta }\right) \cong \left( \mathcal{L}, \cdot \right) $. To see
this, consider the symmetric bilinear map $\theta :\mathcal{L}\times \mathcal{L}%
\longrightarrow \mathcal{L}$ defined by $\theta \left( x,y\right) =x  y-%
\left[ x,y\right] $ for all $x,y$ in $\mathcal{L}$. Then $\theta \in \mathrm{%
Z}_{\mathrm{SA}}^{2}\left( \mathcal{L},\mathcal{L}\right) $ and $\left( 
\mathcal{L},  \cdot_{\theta }\right) =\left( \mathcal{L},  \cdot\right) $.

\bigskip

Now, let $\left( \mathcal{L},\left[ -,-\right] \right) $ be a nilpotent Lie algebra
such that $\mathrm{Z}_{\mathrm{SA}}^{2}\left( \mathcal{L},\mathcal{L}\right)
\neq \mathcal{\varnothing }$ and $\text{Aut}\left( \mathcal{L}\right) $ be
the automorphism group of $\mathcal{L}$. Then $\text{%
Aut}\left( \mathcal{L}\right) $ acts on $\mathrm{Z}_{\mathrm{SA}}^{2}\left( 
\mathcal{L},\mathcal{L}\right) $ by 
\[
\left( \theta \ast \phi \right) \left( x,y\right) =\phi ^{-1}\left( \theta
\left( \phi \left( x\right) ,\phi \left( y\right) \right) \right) , 
\]%
for $\phi \in \text{Aut}\left( \mathcal{L}\right) $, and $\theta \in \mathrm{%
Z}_{\mathrm{SA}}^{2}\left( \mathcal{L},\mathcal{L}\right) $.

\begin{lemma}
Let $\left( \mathcal{L},\left[ -,-\right] \right) $ be a nilpotent Lie algebra and $%
\mathrm{Z}_{\mathrm{SA}}^{2}\left( \mathcal{L},\mathcal{L}\right) \neq 
\mathcal{\varnothing }$. If $\theta ,\vartheta \in \mathrm{Z}_{\mathrm{SA}%
}^{2}\left( \mathcal{L},\mathcal{L}\right) $, then $\left( \mathcal{L} 
_{\theta }\right) $ and $\left( \mathcal{L}  _{\vartheta }\right) $ are
isomorphic if and only if there is a $\phi \in \text{Aut}\left( \mathcal{L}%
\right) $ with $\theta \ast \phi =\vartheta $.
\end{lemma}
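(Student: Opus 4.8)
The plan is to run the standard ``cocycle orbit'' argument familiar from the Skjelbred--Sund machinery for nilpotent Lie algebras, now adapted to the class $\mathrm{Z}_{\mathrm{SA}}^{2}$. The one structural observation that drives everything is that the prescribed bracket of $\mathcal{L}$ is recovered from the multiplication $\cdot_{\theta}$ as its commutator: since $\theta$ is symmetric and $[-,-]$ is antisymmetric, $x\cdot_{\theta}y - y\cdot_{\theta}x = (\theta(x,y)+[x,y]) - (\theta(y,x)+[y,x]) = 2[x,y]$, hence $[x,y]=\frac{1}{2}(x\cdot_{\theta}y - y\cdot_{\theta}x)$ because $2$ is invertible. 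Thus the commutator algebra of $\left(\mathcal{L},\cdot_{\theta}\right)$ is exactly $\left(\mathcal{L},[-,-]\right)$, and the same holds for $\vartheta$. Consequently, a linear bijection of $\mathcal{L}$ that intertwines the products $\cdot_{\vartheta}$ and $\cdot_{\theta}$ automatically intertwines the brackets, i.e.\ it lies in $\text{Aut}\left(\mathcal{L}\right)$.

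For the ``if'' direction, assume $\phi\in\text{Aut}\left(\mathcal{L}\right)$ with $\theta\ast\phi=\vartheta$; I would check directly that $\phi$ is an algebra homomorphism $\left(\mathcal{L},\cdot_{\vartheta}\right)\to\left(\mathcal{L},\cdot_{\theta}\right)$. Indeed, using $\vartheta(x,y)=\phi^{-1}(\theta(\phi(x),\phi(y)))$ and $\phi([x,y])=[\phi(x),\phi(y)]$, we get $\phi(x\cdot_{\vartheta}y)=\phi(\vartheta(x,y))+\phi([x,y])=\theta(\phi(x),\phi(y))+[\phi(x),\phi(y)]=\phi(x)\cdot_{\theta}\phi(y)$. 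Since $\phi$ is a linear bijection, it is an isomorphism $\left(\mathcal{L},\cdot_{\vartheta}\right)\cong\left(\mathcal{L},\cdot_{\theta}\right)$.

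For the ``only if'' direction, let $\psi\colon\left(\mathcal{L},\cdot_{\vartheta}\right)\to\left(\mathcal{L},\cdot_{\theta}\right)$ be an algebra isomorphism (this is no loss, as ``isomorphic'' is symmetric), so in particular a linear bijection of $\mathcal{L}$. Applying $\psi$ to $[x,y]=\frac{1}{2}(x\cdot_{\vartheta}y - y\cdot_{\vartheta}x)$ and using that $\psi$ preserves multiplication gives $\psi([x,y])=\frac{1}{2}(\psi(x)\cdot_{\theta}\psi(y) - \psi(y)\cdot_{\theta}\psi(x))=[\psi(x),\psi(y)]$, so $\psi\in\text{Aut}\left(\mathcal{L}\right)$. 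Then from $\psi(x\cdot_{\vartheta}y)=\psi(x)\cdot_{\theta}\psi(y)$, that is $\psi(\vartheta(x,y))+\psi([x,y])=\theta(\psi(x),\psi(y))+[\psi(x),\psi(y)]$, cancelling the now-equal bracket terms yields $\psi(\vartheta(x,y))=\theta(\psi(x),\psi(y))$, i.e.\ $\vartheta=\theta\ast\psi$ with $\psi\in\text{Aut}\left(\mathcal{L}\right)$, as required.

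There is no genuine obstacle here --- the argument is purely formal. The only points deserving care are: that symmetry of $\theta$ together with invertibility of $2$ are exactly what make the commutator of $\cdot_{\theta}$ equal to $[-,-]$; that $\theta\ast\phi$ is again a symmetric map lying in $\mathrm{Z}_{\mathrm{SA}}^{2}\left(\mathcal{L},\mathcal{L}\right)$, which is already the content of the preceding lemma; and that every map involved is $\mathbb{F}$-linear, so that ``intertwines both the products and the brackets'' is genuinely the same as ``is an algebra isomorphism''.
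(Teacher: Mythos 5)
Your proposal is correct and follows essentially the same route as the paper's proof: verify directly that $\theta\ast\phi=\vartheta$ makes $\phi$ intertwine the products, and conversely that any algebra isomorphism preserves the bracket (hence lies in $\text{Aut}(\mathcal{L})$) and then cancels the bracket terms to recover $\theta\ast\phi=\vartheta$. The only difference is that you spell out explicitly why the isomorphism is automatically a Lie automorphism --- namely that $[-,-]$ is the antisymmetrization of $\cdot_{\theta}$ since $\theta$ is symmetric --- a step the paper asserts without comment.
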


\begin{proof}
If $\theta \ast \phi =\vartheta $, then $\phi :\left( \mathcal{L}, 
\cdot_{\vartheta }\right) \longrightarrow $ $\left( \mathcal{L},  \cdot_{\theta
}\right) $ is an isomorphism since $\phi \left( \vartheta \left( x,y\right)
\right) =\theta \left( \phi \left( x\right) ,\phi \left( y\right) \right) $.
On the other hand, if $\phi :\left( \mathcal{L},  \cdot_{\vartheta }\right)
\longrightarrow $ $\left( \mathcal{L}, \cdot _{\theta }\right) $ is an
isomorphism of shift associative  algebras, then $\phi \in \text{Aut}\left(
  \mathcal{L},\left[ -,-\right]   \right) $ and $\phi \left(
x  \cdot_{\vartheta }y\right) =\phi \left( x\right)  \cdot _{\theta }\phi
\left( x\right) $. Hence
\begin{center}
    $\vartheta \left( x,y\right) +\left[ x,y\right]
=\phi ^{-1}\left( \theta \left( \phi \left( x\right) ,\phi \left( y\right)
\right) +\left[ \phi \left( x\right) ,\phi \left( y\right) \right] \right)
=\left( \theta \ast \phi \right) \left( x,y\right) +\left[ x,y\right] $ 
\end{center} and
therefore $\theta \ast \phi =\vartheta $.
\end{proof}

Hence, we have a procedure to classify the shift associative  algebras that
have a given nilpotent Lie algebra as associated Lie algebra. It runs as follows:

\begin{enumerate}
\item Given a Lie algebra $\left( \mathcal{L},\left[ -,-\right] \right) $,
compute $\mathrm{Z}_{\mathrm{SA}}^{2}\left( \mathcal{L},\mathcal{L}\right) $.

\item If $\mathrm{Z}_{\mathrm{SA}}^{2}\left( \mathcal{L},\mathcal{L}\right) =%
\mathcal{\varnothing }$, then $\mathcal{L}$ does not have shift  associative
structures.

\item If $\mathrm{Z}_{\mathrm{SA}}^{2}\left( \mathcal{L},\mathcal{L}\right)
\neq \mathcal{\varnothing }$, then find the orbits of $\text{Aut}\left( 
\mathcal{L}\right) $ on $\mathrm{Z}_{\mathrm{SA}}^{2}\left( \mathcal{L},%
\mathcal{L}\right) $. Choose a representative $\theta $ from each orbit and
then construct the shift associative algebra $\left( \mathcal{L}, 
\cdot_{\theta }\right) $.
\end{enumerate}

\bigskip 

Let us introduce the following notations. Let $e_{1},e_{2},\ldots ,e_{n}$ be
a fixed basis of a nilpotent Lie algebra $\left( \mathcal{L},\left[ -,-%
\right] \right) $. Define $\mathrm{Sym}^{2}\left( \mathcal{L},\mathbb{C}%
\right) $ to be the space of all symmetric bilinear forms on $\mathcal{L}$.
Then  $\mathrm{Sym}^{2}\left( \mathcal{L},\mathbb{C}\right) =\left\langle
\Delta _{i,j}:1\leq i\leq j\leq n\right\rangle $ where $\Delta _{i,j}$ is
the symmetric bilinear form $\Delta _{i,j}:\mathcal{L}\times \mathcal{L}%
\longrightarrow \mathbb{C}$\ defined by%
\[
\Delta _{i,j}\left( e_{l},e_{m}\right) :=\left\{ 
\begin{tabular}{ll}
$1,$ & if $\left\{ i,j\right\} =\left\{ l,m\right\} ,$ \\ 
$0,$ & otherwise.%
\end{tabular}%
\right. 
\]%
Now, if $\theta \in \mathrm{Z}_{\mathrm{SA}}^{2}\left( \mathcal{L},\mathcal{L%
}\right) $, then $\theta $ can be uniquely written as $\theta \left(
x,y\right) =\underset{i=1}{\overset{n}{\sum }}B_{i}\left( x,y\right) e_{i}$
where $B_{1},B_{2},\ldots ,B_{n}$ is a sequence of symmetric bilinear forms
on $\mathcal{L}$. Also, we may write $\theta =\left( B_{1},B_{2},\ldots
,B_{n}\right).$ Let $\phi ^{-1}\in \text{Aut}\left( \mathcal{L}\right) $
be given by the matrix $\left( b_{ij}\right) $. If $\left( \theta \ast \phi
\right) \left( x,y\right) =\underset{i=1}{\overset{n}{\sum }}B_{i}^{\prime
}\left( x,y\right) e_{i}$, then $B_{i}^{\prime }=\underset{j=1}{\overset{n}{%
\sum }}b_{ij}\phi ^{t}B_{j}\phi $.

\noindent {\bf Classification in dimension 2}

\begin{lemma}\label{2-dim asscom}
Any complex $2$-dimensional shift associative  algebra is isomorphic to one of the
following commutative associative algebras:

\begin{longtable}{cclll}
 $\mathcal{A}_{01}$&$:$&$e_{1} \circ e_{1}=e_{1}$&$e_{2} \circ e_{2}=e_{2}$\\

 $\mathcal{A}_{02}$&$:$&$e_{1} \circ e_{1}=e_{1} $&$e_{1}  \circ
e_{2}=e_{2} $ \\

 $\mathcal{A}_{03}$&$:$&$e_{1} \circ e_{1}=e_{1}$\\

 $\mathcal{A}_{04}$&$:$&$e_{1}\circ e_{1}=e_{2}$
\end{longtable}
\end{lemma}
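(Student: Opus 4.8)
The plan is to classify $2$-dimensional shift associative algebras by first reducing to the commutative associative case and then enumerating such algebras directly. The key observation is that any $2$-dimensional shift associative algebra $\mathcal{A}$ has $(\mathcal{A},[\cdot,\cdot])$ abelian: indeed, by the corollary following the $k$-nice theorem, $(\mathcal{A},[\cdot,\cdot])$ is a $4$-step nilpotent Lie algebra, but every $2$-dimensional Lie algebra is either abelian or the non-abelian solvable one, which is \emph{not} nilpotent; hence $(\mathcal{A},[\cdot,\cdot])$ is abelian. Therefore $xy=yx$ for all $x,y$, so by the remark in the introduction (commutativity of a shift associative algebra forces associativity, which also follows from Lemma \ref{unital}'s proof technique or directly from substituting into \eqref{nearly_eq}), $\mathcal{A}$ is commutative associative. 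Thus the problem collapses to: classify $2$-dimensional commutative associative algebras over $\mathbb{C}$, which is classical.

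Next I would carry out the enumeration of $2$-dimensional commutative associative algebras. Split on whether $\mathcal{A}$ is a nilalgebra. If $\mathcal{A}^2=0$ we get the zero algebra (not listed here, or it is the trivial case outside the lemma's scope — one should check whether the lemma intends nonzero product; the four algebras listed all have nonzero multiplication, and presumably the null algebra is excluded or treated elsewhere). If $\mathcal{A}$ is nilpotent but $\mathcal{A}^2\neq 0$, then $\dim\mathcal{A}^2=1$ and $\mathcal{A}^3=0$; picking $e_1$ with $e_1^2=e_2\neq 0$ spanning $\mathcal{A}^2$ and $e_1e_2=e_2^2=0$ gives $\mathcal{A}_{04}$, and one checks no further freedom remains after scaling. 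If $\mathcal{A}$ is not a nilalgebra, by Corollary \ref{idem} it contains an idempotent $e\neq 0$; use the Peirce decomposition from Theorem \ref{decom}, $\mathcal{A}=\mathcal{A}_0\oplus\mathcal{A}_1$ with $\mathcal{A}_0\mathcal{A}_1=\mathcal{A}_1\mathcal{A}_0=0$ and $\mathcal{A}_1$ a unital commutative associative algebra containing $e=1_{\mathcal{A}_1}$. The cases are: $\dim\mathcal{A}_1=2,\dim\mathcal{A}_0=0$, giving a $2$-dimensional unital commutative associative algebra, which is either $\mathbb{C}\times\mathbb{C}$ ($\cong\mathcal{A}_{01}$) or $\mathbb{C}[t]/(t^2)$ ($\cong\mathcal{A}_{02}$ after relabelling $e_1=1$, $e_2=t$); and $\dim\mathcal{A}_1=1,\dim\mathcal{A}_0=1$, where $\mathcal{A}_1=\mathbb{C}e_1$ with $e_1^2=e_1$ and $\mathcal{A}_0$ is a $1$-dimensional nilpotent commutative associative algebra, forcing $\mathcal{A}_0^2=0$, giving $\mathcal{A}_{03}$.

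The main obstacle — really the only nontrivial point — is being careful that every algebra obtained is genuinely new up to isomorphism and that the list is exhaustive, i.e., that the Peirce case analysis together with the nilpotent case covers everything and produces no redundancies. Concretely one must verify that $\mathbb{C}[t]/(t^2)$ and $\mathbb{C}\times\mathbb{C}$ are non-isomorphic (one has nilpotents, the other is semisimple), that $\mathcal{A}_{03}$ (which is $\mathbb{C}\oplus(\text{null})$ as ideals) is distinct from both, and that $\mathcal{A}_{04}$ (nilpotent, $\mathcal{A}^2\neq 0$) is distinct from the genuinely null algebra. This bookkeeping is routine over an algebraically closed field, and the scaling arguments (e.g. rescaling $e_2$ in $\mathcal{A}_{04}$, or diagonalising the idempotent structure) present no difficulty. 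I would present the reduction-to-commutative-associative step in full detail, since that is where the shift associative hypothesis is actually used, and then invoke the classical classification of $2$-dimensional commutative associative algebras, spelling out the Peirce-decomposition case split to make the correspondence with $\mathcal{A}_{01}$–$\mathcal{A}_{04}$ transparent.
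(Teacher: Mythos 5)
Your proposal is correct and follows essentially the same route as the paper: the paper's proof likewise observes that $(\mathcal{A},[\cdot,\cdot])$ is a nilpotent Lie algebra, hence abelian in dimension $2$, so $\mathcal{A}$ is commutative and therefore commutative associative, after which the classical classification of $2$-dimensional commutative associative algebras gives the list. Your additional Peirce-decomposition enumeration just fills in that classical step, which the paper leaves implicit.
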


\begin{proof}
Let $\mathcal{A}$ be a $2$-dimensional shift associative algebra. Then $(\mathcal{A}, [\cdot, \cdot])$ is a nilpotent Lie algebra and therefore it has zero multiplication. Hence $%
\mathcal{A}$\ is a commutative associative algebra.
\end{proof}

\begin{definition}
     Let $\Omega$ be a variety of algebras. 
 We say that an algebra $(\bf A, \cdot, \ast)$ is a compatible $\Omega$-algebra, 
 if and only if $(\bf A, \cdot),$ $(\bf A, \ast)$
and $(\bf A, \cdot + \ast)$ are $\Omega$-algebras.
In particular, 
$(\bf A, \cdot, \ast)$ is a compatible shift associative algebra, if it satisfies
\begin{longtable}{rcl}
$\left( x\cdot   y\right) \cdot   z$&$=$&$ y\cdot  \left(z\cdot x\right),$\\
$\left( x\ast   y\right) \ast   z$&$=$&$ y\ast  \left(z\ast x\right),$\\
$\left( x\ast   y\right) \cdot   z+\left( x\cdot   y\right) \ast   z$&$=$&$y\ast  \left(z\cdot x\right)+ y\cdot  \left(z\ast x\right).$\\

 \end{longtable}

\begin{corollary}
    Let  $(\bf A, \cdot, \ast)$ be a complex $2$-dimensional 
    compatible shift associative algebra, then it is 
    a compatible commutative associative algebra  
    and it is an algebra given in \cite[Theorem 8]{akm}.
\end{corollary}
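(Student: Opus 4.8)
The plan is to reduce the statement to Lemma~\ref{2-dim asscom} together with the known classification of complex $2$-dimensional compatible commutative associative algebras. First I would unwind the definition: saying that $({\bf A},\cdot,\ast)$ is a compatible shift associative algebra means precisely that the three displayed identities hold. The first of them is the shift associative identity $(\ref{nearly_eq})$ for the product $\cdot$, the second is $(\ref{nearly_eq})$ for the product $\ast$, and the third is the ``mixed'' relation obtained by substituting $xy\mapsto x\cdot y+x\ast y$ into $(xy)z-y(zx)$ and collecting the terms in which $\ast$ occurs exactly once. Hence the three identities together are equivalent to the assertion that each of the products $\cdot$, $\ast$ and $\cdot+\ast$ satisfies $(\ref{nearly_eq})$; in other words $({\bf A},\cdot)$, $({\bf A},\ast)$ and $({\bf A},\cdot+\ast)$ are all complex $2$-dimensional shift associative algebras.

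Next I would apply Lemma~\ref{2-dim asscom} to each of these three algebras in turn: a complex $2$-dimensional shift associative algebra is commutative associative, since its commutator algebra is a $2$-dimensional nilpotent Lie algebra and therefore has trivial multiplication. Thus $\cdot$, $\ast$ and $\cdot+\ast$ are all commutative associative products on ${\bf A}$, which is exactly what it means for $({\bf A},\cdot,\ast)$ to be a compatible commutative associative algebra (taking $\Omega$ to be the variety of commutative associative algebras in the definition above). Finally, the complex $2$-dimensional compatible commutative associative algebras have been classified in \cite[Theorem 8]{akm}, so $({\bf A},\cdot,\ast)$ is isomorphic to one of the algebras appearing there, which completes the argument.

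There is essentially no obstacle here; the only point deserving care is the polarization bookkeeping in the first step, namely checking that the third identity in the definition really is the degree-one-in-$\ast$ component of the linearized shift associative identity, so that the three conditions together are genuinely equivalent to $\cdot$, $\ast$, and $\cdot+\ast$ all being shift associative, with no hidden extra constraint. Since being a ``compatible $\Omega$-algebra'' means nothing more than that all three products $\cdot$, $\ast$, $\cdot+\ast$ lie in $\Omega$, this verification is routine and the proof is complete.
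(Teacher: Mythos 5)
Your argument is correct and is exactly the reduction the paper intends: the corollary is stated without proof as an immediate consequence of Lemma~\ref{2-dim asscom} (each of $\cdot$, $\ast$, $\cdot+\ast$ is a $2$-dimensional shift associative, hence commutative associative, product) together with the classification in \cite[Theorem 8]{akm}. Your extra care about the polarization bookkeeping — that the third identity is precisely the degree-one-in-$\ast$ part of the shift associative identity for $\cdot+\ast$, so the three conditions are equivalent to all three products being shift associative — is the only point of substance, and you handle it correctly.
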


\end{definition}

\noindent {\bf Classification in dimension 3}

It is known, that there is only one nontrivial  $3$-dimensional nilpotent Lie algebra: 
\begin{longtable}{clll}

 $\mathcal{L}_{1}$ &$:$&$\left[ e_{1},e_{2}\right] =e_{3}$
\end{longtable}

\begin{lemma}
\label{3-dim asscom}
Let $\mathcal{A}$ be a nontrivial  $3$-dimensional commutative associative algebra. Then $\mathcal{A}= \mathcal{A}_a \oplus \mathcal{A}_b,$
where $\mathcal{A}_a$ is given in Lemma \ref{2-dim asscom} and $\mathcal{A}_b^2=0,$ or it 
is isomorphic to one of the following algebras:

\begin{longtable}{cclllll}
 
$\mathcal{A}_{05}$&$:$&$e_{1} \circ e_{2}=e_{3}$\\

$\mathcal{A}_{06}$&$:$&$e_{1} \circ e_{1}=e_{2}$ & $e_{1} \circ e_{2}=e_{3}$\\

$\mathcal{A}_{07}$&$:$&$e_{1} \circ e_{1}=e_{1}$&$e_{2}\circ e_{2}=e_{2}$&$e_{3}  \circ e_{3}=e_{3}$\\

$\mathcal{A}_{08}$&$:$&$e_{1} \circ e_{1}=e_{1}$&$e_{2}\circ e_{2}=e_{2}$&$e_{2} \circ e_{3}=e_{3}$\\

$\mathcal{A}_{09}$&$:$&$e_{1} \circ e_{1}=e_{1} $&$e_{1}\circ e_{2}=e_{2} $&$ e_{1} \circ e_{3}=e_{3}$\\

$\mathcal{A}_{10}$&$:$&$e_{1} \circ e_{1}=e_{1}$&$e_{1} \circ e_{2}=e_{2}$&$e_{1} \circ e_{3}=e_{3}$&$e_{2} \circ e_{2}=e_{3}$\\

$\mathcal{A}_{11}$&$:$&$e_{1} \circ e_{1}=e_{1}$&$e_{2} \circ e_{2}=e_{3}$\\
\end{longtable}
\end{lemma}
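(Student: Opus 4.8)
The plan is to classify all nontrivial $3$-dimensional commutative associative algebras $\mathcal{A}$ over $\mathbb{C}$, organizing the argument by the structure theory already established. By Theorem \ref{sum}, write $\mathcal{A} = \mathcal{S} \oplus \mathcal{R}$ where $\mathcal{S}$ is a semisimple commutative associative algebra (hence a direct sum of copies of $\mathbb{C}$) and $\mathcal{R} = \mathcal{R}ad(\mathcal{A})$ is a nilpotent commutative associative algebra. The dimension $\dim \mathcal{S} \in \{0,1,2,3\}$ gives the natural case split. If $\dim \mathcal{S} = 3$, then $\mathcal{A} = \mathbb{C}^3$, which is $\mathcal{A}_{07}$. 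If $\dim \mathcal{S} = 0$, then $\mathcal{A} = \mathcal{R}$ is a $3$-dimensional nilpotent commutative associative algebra; classifying these is standard (they correspond to $\mathcal{A}_{05}$, $\mathcal{A}_{06}$, and the decomposable $\mathcal{A}_b^2=0$ cases built on $\mathcal{A}_{04}$ and the trivial algebra), and can be done by looking at $\dim \mathcal{R}^2$.

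Next I would handle $\dim \mathcal{S} = 2$, so $\mathcal{S} = \mathbb{C}e_1 \oplus \mathbb{C}e_2$ with $e_ie_i = e_i$, $e_1e_2 = 0$, and $\dim \mathcal{R} = 1$, say $\mathcal{R} = \mathbb{C}e_3$ with $e_3e_3 = 0$. The Peirce-type decomposition relative to the idempotents (via Theorem \ref{decom} applied to $e_1$ and to $e_2$) forces $e_1e_3, e_2e_3 \in \{0, e_3\text{-multiples}\}$ constrained by $e_ie_3 = (e_ie_i)e_3$ and commutativity; since $e_1 + e_2$ may or may not be a unit on $e_3$, one finds $e_3$ lies in exactly one Peirce component, giving (up to relabeling $e_1 \leftrightarrow e_2$) either $e_2e_3 = e_3$ (this is $\mathcal{A}_{08}$) or $e_3$ annihilated by both (which is $\mathcal{A}_{01} \oplus \mathcal{A}_b$ with $\mathcal{A}_b = \mathbb{C}e_3$, $\mathcal{A}_b^2 = 0$, i.e. the decomposable case). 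The case $\dim \mathcal{S} = 1$ is the richest: $\mathcal{S} = \mathbb{C}e_1$ with $e_1e_1 = e_1$, and $\mathcal{R} = \langle e_2, e_3\rangle$ is a $2$-dimensional nilpotent commutative associative algebra, so either $\mathcal{R}^2 = 0$ or $\mathcal{R}$ is the algebra with $e_2e_2 = e_3$, $\mathcal{R}^3 = 0$. By Corollary \ref{ex=xe} and the Peirce decomposition $\mathcal{A} = \mathcal{A}_0(e_1) \oplus \mathcal{A}_1(e_1)$, the action of $e_1$ on $\mathcal{R}$ is by the idempotent operator $L_{e_1} = R_{e_1}$, which is diagonalizable with eigenvalues $0,1$; combining the possible eigenvalue patterns on $e_2, e_3$ with the two internal structures of $\mathcal{R}$ and compatibility ($e_1$ must act as an algebra endomorphism on $\mathcal{R}$, and $\mathcal{A}_1$ must be a subalgebra containing $e_1$ as unit) yields $\mathcal{A}_{09}$, $\mathcal{A}_{10}$, $\mathcal{A}_{11}$, plus the decomposable cases $\mathcal{A}_{03} \oplus (\mathcal{R}, \mathcal{R}^2=0)$, $\mathcal{A}_{02} \oplus \mathbb{C}e_3$ (which reappears in the lemma's list via $\mathcal{A}_a \oplus \mathcal{A}_b$), and $\mathcal{A}_{04}$-type pieces.

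Finally I would verify that the algebras in the list are pairwise non-isomorphic and each is genuinely $3$-dimensional and nontrivial. Non-isomorphism follows from discrete invariants: $\dim \mathcal{A}^2$, $\dim \mathcal{A}^3$, the dimension of the radical, the number of primitive idempotents (equivalently $\dim \mathcal{S}$), whether the algebra is unital, and whether it is decomposable. These invariants separate $\mathcal{A}_{05}$ through $\mathcal{A}_{11}$ from each other and from all algebras of the form $\mathcal{A}_a \oplus \mathcal{A}_b$ with $\mathcal{A}_b^2 = 0$.

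The main obstacle I anticipate is the bookkeeping in the $\dim\mathcal{S} = 1$ case: one must carefully enumerate how the idempotent $e_1$ can act on a $2$-dimensional nilpotent commutative associative algebra $\mathcal{R}$ so that the resulting multiplication on $\mathcal{A} = \mathbb{C}e_1 \oplus \mathcal{R}$ stays associative and commutative, then quotient the resulting family by the automorphism group (equivalently, by $\mathrm{GL}_2$ change of basis on $\mathcal{R}$ compatible with both its internal product and the $e_1$-action) to extract exactly the canonical forms $\mathcal{A}_{09}, \mathcal{A}_{10}, \mathcal{A}_{11}$; a scaling argument over $\mathbb{C}$ (using that $\mathbb{C}$ is algebraically closed) is what collapses continuous parameters to these finitely many representatives. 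Once that case is settled, the remaining cases are routine, and I would streamline them by citing that the nilpotent pieces in dimensions $\leq 2$ were already dealt with in Lemma \ref{2-dim asscom} and the $\dim\mathcal{S}=0$ subcase.
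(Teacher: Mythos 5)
Your proposal is correct, but note that the paper itself offers no proof of this lemma at all: it is stated as a known classical fact (the classification of low-dimensional commutative associative algebras, available in the cited literature on degenerations such as \cite{fkkv,gkp,KM14}), so there is no argument in the text to compare against. Your self-contained derivation via the Wedderburn-type splitting $\mathcal{A}=\mathcal{S}\oplus\mathcal{R}$ of Theorem \ref{sum}, the case split on $\dim\mathcal{S}$, and the Peirce/eigenvalue analysis of $L_{e_1}$ on the radical is sound and complete: the idempotency of $L_{e_1}|_{\mathcal{R}}$ and the orthogonality $\mathcal{R}_0\mathcal{R}_1=0$ do force exactly the cases you enumerate, and the $\dim\mathcal{S}=0$ subcase reduces, as you say, to the rank of the induced symmetric form on $\mathcal{R}/\mathcal{R}^2$ when $\dim\mathcal{R}^2=1$. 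Two small points deserve explicit mention if you write this up in full: first, $\mathcal{A}_{11}$ (and likewise $\mathcal{A}_{07}$, $\mathcal{A}_{08}$) is decomposable, but the lemma's ``$\mathcal{A}_a\oplus\mathcal{A}_b$ with $\mathcal{A}_b^2=0$'' clause only absorbs summands with \emph{trivial} multiplication, which is why these algebras must still appear in the explicit list --- your $\dim\mathcal{S}=1$ analysis produces $\mathcal{A}_{11}$ correctly, but the final non-redundancy check should use ``decomposable with a nontrivial complement'' rather than just ``decomposable'' as an invariant; second, one should verify that $\mathcal{A}_{05}$ (the rank-two form) is genuinely indecomposable, which follows since its annihilator equals $\mathcal{A}_{05}^2$ and so no complementary ideal can exist. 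With those checks added, your argument is a complete proof of the statement.
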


\begin{theorem}
\label{3-dim nearly}
Let $\mathcal{A}$ be a nontrivial  $3$-dimensional shift associative algebra. Then $\mathcal{A}$ is 
a  commutative associative  algebra given in Lemma \ref{3-dim asscom}, or it is isomorphic to one of the following algebras:

\begin{longtable}{ccllll}

 $\mathfrak{a}_{1}$&$:$&$e_{1}  e_{2}=e_{3}$&$ e_{2}  e_{1}=-e_{3}$\\

$\mathfrak{a}_{2}^{\alpha }$&$:$&$e_{1}  e_{1}=e_{3}$&$e_{1} e_{2}=e_{3}$&$e_{2}  e_{1}=-e_{3}$&$e_{2}  e_{2}=\alpha e_{3}$\\
\end{longtable}
\end{theorem}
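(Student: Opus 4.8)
The plan is to use the cohomological framework just developed: a $3$-dimensional shift associative algebra $\mathcal{A}$ that is not commutative associative has $(\mathcal{A},[\cdot,\cdot])$ a nonzero nilpotent Lie algebra, which forces $(\mathcal{A},[\cdot,\cdot])\cong\mathcal{L}_1$, the Heisenberg algebra with $[e_1,e_2]=e_3$. So the whole problem reduces to computing $\mathrm{Z}_{\mathrm{SA}}^2(\mathcal{L}_1,\mathcal{L}_1)$ explicitly and then factoring out the action of $\mathrm{Aut}(\mathcal{L}_1)$.

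First I would write a generic symmetric bilinear map $\theta=(B_1,B_2,B_3)$ in the basis $\Delta_{i,j}$, $1\le i\le j\le 3$, so that $\theta$ is specified by $18$ scalars. Then I would substitute into the defining cocycle condition
\[
\theta(x,\theta(y,z))+[x,\theta(y,z)]+\theta(x,[y,z])+[x,[y,z]]=\theta(\theta(z,x),y)+[\theta(z,x),y]+\theta([z,x],y)+[[z,x],y]
\]
for all triples of basis vectors $x,y,z\in\{e_1,e_2,e_3\}$. Because $e_3$ is central and $[e_1,e_2]=e_3$, the terms $[x,\theta(y,z)]$ only see the $e_1,e_2$-components of $\theta(y,z)$, and the associator-like terms $\theta(x,\theta(y,z))$ vs.\ $\theta(\theta(z,x),y)$ will be the bulk of the constraints. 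Collecting coefficients of $e_1,e_2,e_3$ in each of the finitely many instances gives a system of polynomial equations cutting out $\mathrm{Z}_{\mathrm{SA}}^2(\mathcal{L}_1,\mathcal{L}_1)$ as an affine variety; I expect it to impose that many of the $B_i$ vanish (in particular those landing in $e_1,e_2$ that would make $\mathcal{A}^2$ too large) and leave a low-dimensional family of $\theta$'s whose image lies in $\langle e_3\rangle$, i.e.\ $\theta=(0,0,B_3)$ with $B_3$ a symmetric form in $e_1,e_2$ subject to a residual condition.

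Next I would apply the automorphism-orbit reduction (the two lemmas on $\mathrm{Z}_{\mathrm{SA}}^2$ and $\mathrm{Aut}(\mathcal{L})$). Since $\mathrm{Aut}(\mathcal{L}_1)$ acting on the central form $B_3$ is essentially $\mathrm{GL}_2$ acting by congruence on $2\times2$ symmetric matrices together with scaling of $e_3$, the orbits of nonzero $B_3$ are governed by rank: rank $1$ and rank $2$. Tracking the extra asymmetric part coming from $[e_1,e_2]=e_3$ in the product $x\cdot_\theta y=\theta(x,y)+[x,y]$, the rank-$1$ orbit should yield $\mathfrak{a}_1$ (where $e_1e_2=e_3=-e_2e_1$ and the symmetric part contributes nothing essential) while the rank-$2$ case, after normalizing as much as possible, leaves a one-parameter family $\mathfrak{a}_2^\alpha$ with $e_1e_1=e_3$, $e_1e_2=e_3$, $e_2e_1=-e_3$, $e_2e_2=\alpha e_3$; I would then verify that distinct $\alpha$ give non-isomorphic algebras (e.g.\ by an invariant such as the dimension of $\{x: x^2=0\}$ or the rank of the symmetrized product), and check that no admissible $\theta$ was missed and that $\theta=0$ recovers a commutative associative algebra already listed.

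The main obstacle will be the bookkeeping in the first step: correctly enumerating and simplifying the cocycle equations over all basis triples and proving that the solution set is exactly the claimed family, rather than accidentally dropping a component. A secondary subtlety is making sure the $\mathrm{Aut}(\mathcal{L}_1)$-action is pinned down precisely — $\mathrm{Aut}$ of the Heisenberg Lie algebra is larger than $\mathrm{GL}_2$ (it includes the central translations $e_i\mapsto e_i+\lambda_i e_3$ for $i=1,2$), and those extra automorphisms are exactly what one uses to kill off-diagonal or lower-order terms in $B_3$, so the normal-form argument must exploit them carefully. Once the variety and the group action are correctly identified, reading off the two representatives $\mathfrak{a}_1$ and $\mathfrak{a}_2^\alpha$ is routine.
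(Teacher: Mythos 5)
Your overall strategy is exactly the paper's: reduce to the Heisenberg algebra $\mathcal{L}_1$, show that every cocycle has the form $\theta=(0,0,B_3)$ with $B_3=\alpha\Delta_{1,1}+\beta\Delta_{1,2}+\gamma\Delta_{2,2}$, and then quotient by $\mathrm{Aut}(\mathcal{L}_1)$. The first two steps are fine. The gap is in the orbit analysis. You describe the action on $B_3$ as ``essentially $\mathrm{GL}_2$ acting by congruence together with scaling of $e_3$'' and conclude that the nonzero orbits are governed by rank. But the scaling of $e_3$ is not free: since $e_3=[e_1,e_2]$ spans the derived subalgebra, an automorphism with $2\times 2$ block $\phi_0$ on $\langle e_1,e_2\rangle$ must send $e_3\mapsto(\det\phi_0)\,e_3$, so the induced action on the symmetric matrix $M=\left(\begin{smallmatrix}\alpha&\beta\\ \beta&\gamma\end{smallmatrix}\right)$ is $M\mapsto(\det\phi_0)^{-1}\phi_0^{t}M\phi_0$. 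This twisted congruence preserves $\det M$, so the rank-$2$ forms do \emph{not} collapse to a single orbit; the invariant $\det M$ is precisely what produces the continuous family $\mathfrak{a}_2^{\alpha}$ (with $\alpha=\det M$ after normalising the $\Delta_{1,1}$-coefficient to $1$). As written, your rank-based picture would yield only finitely many non-commutative algebras and is internally inconsistent with the one-parameter family you then claim to obtain.

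Two further consequences. First, you misassign the representatives: $\mathfrak{a}_1$ is the orbit of $B_3=0$ (the product is the pure Lie bracket, all squares vanish), not of a rank-$1$ form; a rank-$1$ form normalises to $\Delta_{1,1}$ and gives $\mathfrak{a}_2^{0}$. Second, the invariants you propose to separate $\mathfrak{a}_2^{\alpha}$ from $\mathfrak{a}_2^{\alpha'}$ --- the locus $\{x:x^2=0\}$ and the rank of the symmetrised product --- only detect whether $\alpha$ vanishes, since $x^2=(a^2+\alpha b^2)e_3$ for $x=ae_1+be_2+ce_3$; they cannot show $\mathfrak{a}_2^{\alpha}\not\cong\mathfrak{a}_2^{\alpha'}$ for distinct nonzero $\alpha,\alpha'$. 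The correct separating invariant is the determinant of $B_3$ under the twisted congruence action, which is what the paper's explicit computation of $(\alpha',\beta',\gamma')$ establishes.
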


\begin{proof}
 If $(\mathcal{A}, [\cdot, \cdot])=\mathcal{L}_{1}$, 
  then $\mathrm{Z}_{\mathrm{SA}}^{2}\left( \mathcal{L}_{1},\mathcal{L}%
_{1}\right) \neq \mathcal{\varnothing }$ and therefore $\mathcal{L}_{1}$
has shift associative structures. Now, choose an arbitrary element 
\begin{center}
    $\theta
=\left( B_{1},B_{2},B_{3}\right) \in \mathrm{Z}_{\mathrm{SA}}^{2}\left( 
\mathcal{L}_{1},\mathcal{L}_{1}\right) $.
\end{center} Then $\theta =\left(
0,0,\alpha \Delta _{1,1}+\beta \Delta _{1,2}+\gamma \Delta _{2,2}\right) $
for some $\alpha ,\beta ,\gamma \in \mathbb{C}$. Furthermore, an
automorphism $\phi \in \text{Aut}\left( \mathcal{L}_{1}\right) $, is given
by an invertible matrix of the form:%
\[
\left( 
\begin{array}{ccc}
a_{11} & a_{12} & 0 \\ 
a_{21} & a_{22} & 0 \\ 
a_{31} & a_{32} & a_{11}a_{22}-a_{12}a_{21}%
\end{array}%
\right).
\]%
Write $\theta \ast \phi =\left( 0,0,\alpha ^{\prime }\Delta _{1,1}+\beta
^{\prime }\Delta _{1,2}+\gamma ^{\prime }\Delta _{2,2}\right) $. Then%
\begin{longtable}{lcl}
$\alpha ^{\prime }$ &$=$&$\frac{\alpha
a_{11}^{2}+2\beta a_{11}a_{21}+\gamma a_{21}^{2}}{a_{11}a_{22}-a_{12}a_{21}},$ \\
$\beta ^{\prime }$ &$=$&$\frac{\alpha
a_{11}a_{12}+\beta a_{11}a_{22}+\beta a_{12}a_{21}+\gamma
a_{21}a_{22}}{a_{11}a_{22}-a_{12}a_{21}},$ \\
$\gamma ^{\prime }$ &$=$&$\frac{\alpha
a_{12}^{2}+2\beta a_{12}a_{22}+\gamma a_{22}^{2}}{a_{11}a_{22}-a_{12}a_{21}}.$
\end{longtable}%
Then $\left( 
\begin{array}{cc}
\alpha ^{\prime } & \beta ^{\prime } \\ 
\beta ^{\prime } & \gamma ^{\prime }%
\end{array}%
\right) =\frac{1}{a_{11}a_{22}-a_{12}a_{21}}\phi ^{t}\left( 
\begin{array}{cc}
\alpha  & \beta  \\ 
\beta  & \gamma 
\end{array}%
\right) \phi $. Thus, up to a scaler, $\left( 
\begin{array}{cc}
\alpha ^{\prime } & \beta ^{\prime } \\ 
\beta ^{\prime } & \gamma ^{\prime }%
\end{array}%
\right) ,\left( 
\begin{array}{cc}
\alpha  & \beta  \\ 
\beta  & \gamma 
\end{array}%
\right) $ are equivalent. Since $\left( 
\begin{array}{cc}
\alpha  & \beta  \\ 
\beta  & \gamma 
\end{array}%
\right) $ is symmetric, we may assume without any loss of generality that $%
\beta =0$. Then we have the following cases:

\begin{itemize}
\item  $\left( \alpha ,\gamma \right) =\left( 0,0\right) $. Then we get the
algebra $\mathfrak{a}_{1}.$

\item $\left( \alpha ,\gamma \right) \neq \left( 0,0\right) $. Let be the
first of the following matrices if $\alpha \neq 0$ or the second if $\alpha
=0$:%
\[
\left( 
\begin{array}{ccc}
1 & 0 & 0 \\ 
0 & \alpha  & 0 \\ 
0 & 0 & \alpha 
\end{array}%
\right) ,\left( 
\begin{array}{ccc}
0 & -\gamma & 0 \\ 
1 & 0 & 0 \\ 
0 & 0 & \gamma%
\end{array}%
\right).
\]%
Then $\theta \ast \phi =\left( 0,0,\Delta _{1,1}+\gamma ^{\prime }\Delta
_{2,2}\right) $. Hence we get the algebras:%
\begin{longtable}{ccllll}
$\mathfrak{a}_{2}^{\alpha }$&$:$&$e_{1}  e_{1}=e_{3}$&$e_{1} e_{2}=e_{3}$&$e_{2}  e_{1}=-e_{3}$&$e_{2}  e_{2}=\alpha e_{3}$
\end{longtable}
Moreover, the algebras $\mathfrak{a}_{2}^{\alpha }$ and $\mathfrak{a}%
_{2}^{\alpha ^{\prime }}$ are isomorphic if and only if $\alpha =\alpha
^{\prime }$.
\end{itemize}
\end{proof}

\begin{comment}

\blue
\begin{theorem}
Let $(\mathrm{P},\cdot ,\{\cdot ,\cdot \})$ be a complex anti-Poisson-Jordan
algebra of dimension three. (Other than all 3-dimensional Jordan algebras
and Lie algebras) $\mathrm{P}$ is isomorphic to one of the following
algebras:

\begin{itemize}
\item $\mathrm{P}_{3,1}:\left\{ 
\begin{tabular}{l}
$e_{1}\cdot e_{1}=e_{2},$ \\ 
$\left\{ e_{1},e_{3}\right\} =e_{3}.$%
\end{tabular}%
\right. $

\item $\mathrm{P}_{3,2}:\left\{ 
\begin{tabular}{l}
$e_{1}\cdot e_{1}=e_{2},$ \\ 
$\left\{ e_{1},e_{3}\right\} =e_{2}.$%
\end{tabular}%
\right. $

\item $\mathrm{P}_{3,3}^{\alpha }:\left\{ 
\begin{tabular}{l}
$e_{1}\cdot e_{2}=e_{3},$ \\ 
$\left\{ e_{1},e_{2}\right\} =\alpha e_{3}.$%
\end{tabular}%
\right. $

\item $\mathrm{P}_{3,4}:\left\{ 
\begin{tabular}{l}
$e_{1}\cdot e_{1}=e_{1},$ \\ 
$\left\{ e_{2},e_{3}\right\} =e_{2}.$%
\end{tabular}%
\right. $

Between these algebras there are precisely the following isomorphisms:

\item $\mathrm{P}_{3,3}^{\alpha }\cong \mathrm{P}_{3,3}^{\beta }$ if and
only if $\alpha ^{2}=\beta ^{2}$.
\end{itemize}
\end{theorem}

\end{comment}

\noindent {\bf Classification in dimension 4.}

\begin{lemma}
Let $\mathcal{L}$ be a nontrivial  $4$-dimensional nilpotent Lie algebra. Then $\mathcal{%
L}$\ is isomorphic to one of the following algebras:

\begin{longtable}{lcllll}
 $\mathcal{L}_{1}$&$:$&$\left[ e_{1},e_{2}\right] =e_{3}$\\

 $\mathcal{L}_{2}$&$:$&$\left[ e_{1},e_{2}\right] =e_{3}$&$\left[ e_{1},e_{3}\right] =e_{4}$\\
\end{longtable}
\end{lemma}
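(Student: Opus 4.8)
The statement to prove is the classification of nontrivial $4$-dimensional nilpotent Lie algebras. The plan is to carry out a standard argument based on the dimension of the derived subalgebra $\mathcal L^2 = [\mathcal L, \mathcal L]$ together with the nilpotency class, keeping in mind that over an algebraically closed field of characteristic $\neq 2$ the answer is well known (and can also simply be cited).

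\textbf{Setup and reduction.} First I would recall that a nilpotent Lie algebra $\mathcal L$ has $\mathcal L^2 \subsetneq \mathcal L$ and, more precisely, $\mathcal L^2 \subseteq Z(\mathcal L)$ fails in general but the lower central series is strictly decreasing until it hits $0$. Since $\dim \mathcal L = 4$ and $\mathcal L$ is nontrivial (i.e. nonabelian), we have $1 \le \dim \mathcal L^2 \le 2$: indeed if $\dim \mathcal L^2 \ge 3$ then $\mathcal L / \mathcal L^2$ is at most $1$-dimensional, hence abelian of dimension $\le 1$, forcing $\mathcal L = \mathcal L^2 + \text{(1-dim)}$, but then $\mathcal L^2 = [\mathcal L,\mathcal L]$ is spanned by brackets of elements, which collapses — a quick computation shows $\dim \mathcal L^2 \le 1$ in that case, a contradiction. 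So the two cases are $\dim \mathcal L^2 = 1$ and $\dim \mathcal L^2 = 2$.

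\textbf{Case $\dim \mathcal L^2 = 1$.} Write $\mathcal L^2 = \langle e_3 \rangle$ (relabeling later). Then $\mathcal L^2 \subseteq Z(\mathcal L)$ since $[\mathcal L, \mathcal L^2] \subseteq [\mathcal L^2, \mathcal L^2] \oplus \dots$ and by nilpotency $[\mathcal L, \mathcal L^2] = 0$. The bracket descends to a nondegenerate-on-the-quotient alternating form $\bar\omega : \mathcal L/\mathcal L^2 \times \mathcal L/\mathcal L^2 \to \mathbb C$ (valued in the line $\mathcal L^2$), whose radical has codimension equal to the rank of $\bar\omega$, which must be $2$ (rank of an alternating form is even and nonzero). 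So there is a $2$-dimensional symplectic part $\langle e_1, e_2\rangle$ with $[e_1,e_2] = e_3$ and a central complement $\langle e_4 \rangle$; but a $1$-dimensional central ideal with no bracket relations makes $\mathcal L = \mathcal L_1 \oplus \langle e_4\rangle$ — however the table lists only $\mathcal L_1$ with three basis vectors, so here one must be careful: in the $4$-dimensional classification this "Heisenberg plus center" algebra $\mathfrak{h}_3 \oplus \mathbb C$ does occur, and I suspect the table in the excerpt is implicitly understood up to adding an abelian direct summand, or $\mathcal L_1$ is being reused across dimensions. Assuming the intended reading, $\dim \mathcal L^2 = 1$ yields (up to abelian summand) the Heisenberg algebra, i.e. $\mathcal L_1$.

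\textbf{Case $\dim \mathcal L^2 = 2$.} Here nilpotency forces the lower central series $\mathcal L \supset \mathcal L^2 \supset \mathcal L^3 \supset 0$ with $\dim \mathcal L^3 = 1$ (it cannot be $2$, else the series would not terminate, and it cannot be $0$, else $\mathcal L^2$ abelian would give $\dim \mathcal L^2 \le 1$ by the same rank-of-alternating-form argument applied inside). So $\mathcal L$ has nilpotency class $3$, and a standard basis choice gives $[e_1,e_2] = e_3$, $[e_1,e_3] = e_4$, with $e_4$ central and all other brackets reducing to these via Jacobi; one checks $[e_2,e_3]$ must be a multiple of $e_4$ and can be scaled away by a change of basis $e_2 \mapsto e_2 + \lambda e_4$ combined with rescaling, yielding $\mathcal L_2$. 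The main obstacle will be this last normalization step — verifying, via the Jacobi identity and an explicit change of basis, that the single free parameter (the coefficient of $e_4$ in $[e_2, e_3]$) can always be absorbed, and that no further nonisomorphic family appears. Concretely I would: (1) fix $x$ with $\mathcal L^2 = \mathbb C[x,\mathcal L]$ (possible since $\dim \mathcal L/\mathcal L^2 = 2$ and the adjoint action is "generic"), set $e_1 = x$, $e_3 = [e_1,e_2]$, $e_4 = [e_1,e_3]$ for suitable $e_2$; (2) use $[e_1,e_4] = 0$ (as $\mathcal L^4 = 0$) and Jacobi on $(e_1,e_2,e_3)$ to pin down $[e_2,e_3] \in \mathbb C e_4$; (3) apply the base change absorbing that constant; (4) confirm $\mathcal L_1, \mathcal L_2$ are non-isomorphic by comparing $\dim \mathcal L^2$. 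Alternatively, and more honestly given the paper's style, the cleanest proof is simply to cite the classical classification of low-dimensional nilpotent Lie algebras (e.g. de Graaf, or any standard reference), since this lemma is not original.

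\textbf{Remark on the write-up.} Given that the two candidate algebras are distinguished by the invariant $\dim[\mathcal L,\mathcal L] \in \{1,2\}$, the whole argument is short; the only genuine content is ruling out extra one-parameter families in the class-$3$ case, which is dispatched by the change of basis above. I would present it in two short paragraphs (one per case) and end with the citation.
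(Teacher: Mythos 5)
The paper offers no proof of this lemma at all: it is quoted as the classical classification of $4$-dimensional nilpotent Lie algebras (note that in the paper's convention $\mathcal{L}_1$ is read as a $4$-dimensional algebra with all unlisted brackets zero, i.e.\ $\mathfrak{h}_3\oplus\mathbb{C}$, so your worry about the abelian summand is moot). Your case division by $\dim\mathcal{L}^2\in\{1,2\}$ is the standard route and the skeleton is sound: $\dim\mathcal{L}/\mathcal{L}^2\geq 2$ forces $\dim\mathcal{L}^2\leq 2$; in the first case $\mathcal{L}^3=0$ and the $\mathcal{L}^2$-valued alternating form has rank $2$, giving $\mathfrak{h}_3\oplus\mathbb{C}$; in the second case $\dim\mathcal{L}^2/\mathcal{L}^3\leq\dim\Lambda^2(\mathcal{L}/\mathcal{L}^2)=1$ forces $\dim\mathcal{L}^3=1$ and class $3$. (Your parenthetical justification that $[\mathcal{L},\mathcal{L}^2]\subseteq[\mathcal{L}^2,\mathcal{L}^2]\oplus\cdots$ is not right as written; the correct reason $\mathcal{L}^2$ is central when $\dim\mathcal{L}^2=1$ is simply that the lower central series strictly decreases, so $\mathcal{L}^3\subsetneq\mathcal{L}^2$ must vanish.)

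The one step that fails as written is the normalization in the class-$3$ case, which you yourself flag as the main obstacle. With $[e_1,e_2]=e_3$, $[e_1,e_3]=e_4$, $[e_2,e_3]=\lambda e_4$ and $e_4$ central, the substitution $e_2\mapsto e_2+\lambda e_4$ does nothing at all: adding a central element to $e_2$ changes no bracket, so it cannot absorb $\lambda$. The correct change of basis is $e_2\mapsto e_2-\lambda e_1$, which preserves $[e_1,e_2]=e_3$ and kills $[e_2,e_3]$ since $[e_2-\lambda e_1,e_3]=\lambda e_4-\lambda e_4=0$. With that fix (and the Jacobi identity on $(e_1,e_2,e_3)$ imposing no further constraint), the uniqueness of $\mathcal{L}_2$ follows and the two algebras are distinguished by $\dim\mathcal{L}^2$, as you say. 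Alternatively, citing the classical classification, as the paper implicitly does, is entirely adequate here.
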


\begin{lemma}\label{4-dim asscom}
Let $\mathcal{A}$ be a nontrivial  $4$-dimensional commutative associative algebra. Then $\mathcal{A}= \mathcal{A}_a \oplus \mathcal{A}_b,$
where $\mathcal{A}_a$ is given in Lemma \ref{3-dim asscom} and $\mathcal{A}_b^2=0,$ or it 
is isomorphic to one of the following algebras:

\begin{longtable}{lcllllll}

 $\mathcal{A}_{12}$&$:$&$e_{1} \circ e_1 =-e_{3}$ &$e_{1} \circ e_{2}=e_{4}$&$e_{2} \circ e_{2}=e_{3}$\\

 $\mathcal{A}_{13}$&$:$&$e_{1} \circ e_{2}=e_{4}$&$e_{2} \circ e_{2}=e_{3}$\\

 $\mathcal{A}_{14}$&$:$&$e_{1}\circ e_1=e_{4}$&$e_{2} \circ e_2=e_{4}$ & $e_{3} \circ e_3=e_{4}$\\

 $\mathcal{A}_{15}$&$:$&$e_{1} \circ e_1=e_{2}$&$e_{1}  \circ e_{2}=e_{4}$ &$e_{3}\circ e_3=e_{4}$\\ 

$\mathcal{A}_{16}$&$:$&$e_{1} \circ e_1 =e_{2} $&$e_{1}\circ  e_{2}=e_{3}$&$e_{1}  \circ e_{3}=e_{4}$&$e_{2}\circ e_{2}=e_{4}$\\

 $\mathcal{A}_{17}$&$:$&$e_{1} \circ e_1=e_{1}$&$e_{2} \circ e_2=e_{2}$ &$e_{3} \circ e_3=e_{3} $&$ e_{4} \circ e_4=e_{4}$\\

 $\mathcal{A}_{18}$&$:$&$e_{1}\circ e_1=e_{1}$&$e_{2} \circ e_2=e_{2}$&$ e_{3} \circ e_3=e_{3}$&$ e_{3} 
\circ e_{4}=e_{4} $\\

$\mathcal{A}_{19}$&$:$&$e_{1} \circ e_1=e_{1}$&$e_{1}  \circ e_{2}=e_{2}$&$ e_{3} \circ e_3=e_{3}$&$ e_{3} \circ e_{4}=e_{4}$\\

$\mathcal{A}_{20}$&$:$&$e_{1}\circ e_1=e_{1}$&$e_{2} \circ e_2=e_{2}$&$e_{2}  \circ e_{3}=e_{3}$ &$e_{2}\circ  e_{4}=e_{4}$\\

 $\mathcal{A}_{21}$&$:$&$e_{1} \circ e_1=e_{1}$&$e_{2} \circ e_2=e_{2}$&$e_{2}  \circ e_{3}=e_{3}$&$e_{2} \circ e_{4}=e_{4}$&$e_{3}\circ e_3=e_{4}$ \\ 

 $\mathcal{A}_{22}$&$:$&$e_{1} \circ e_1=e_{1}$&$e_{1} \circ e_{2}=e_{2}$&$ e_{1}  \circ e_{3}=e_{3}$&$e_{1} \circ e_{4}=e_{4}$\\

 $\mathcal{A}_{23}$&$:$&$e_{1} \circ e_1=e_{1}$&$e_{1} \circ e_{2}=e_{2}$&$e_{1}  \circ e_{3}=e_{3}$&$e_{1} \circ e_{4}=e_{4}$&$e_{2} \circ e_2=e_{3}$\\

 $\mathcal{A}_{24} $&$:$&$e_{1} \circ e_1=e_{1}$&$e_{1} \circ e_{2}=e_{2}$&$e_{1}  \circ e_{3}=e_{3}$&$ e_{1}  \circ e_{4}=e_{4}$&$e_{2} \circ e_2=e_{3}$ &$e_{2}\circ  e_{3}=e_{4}$\\

$\mathcal{A}_{25}$&$:$&$e_{1} \circ e_1 =e_{1}$ &$e_{1} \circ e_{2}=e_{2}$&$e_{1}  \circ e_{3}=e_{3}$&$e_{1}  \circ e_{4}=e_{4}$&$e_{2} \circ e_2=e_{4}$&$e_{3} \circ e_3=e_{4}$\\

 $\mathcal{A}_{26}$&$:$&$e_{1} \circ e_1=e_{1}$&$e_{2} \circ e_2=e_{2}$&$ e_{3}\circ e_3=e_{4}$\\

$\mathcal{A}_{27}$&$:$&$e_{1} \circ e_1=e_{1}$&$ e_{1}  \circ e_{2}=e_{2}$&$ e_{3} \circ e_3=e_{4}$\\

 $\mathcal{A}_{28}$&$ :$&$e_{1} \circ e_1 =e_{1}$&$ e_{2} \circ e_{3}=e_{4}$\\

 $\mathcal{A}_{29}$&$:$&$ e_{1} \circ e_1=e_{1}$&$e_{2} \circ e_2=e_{3} $&$ e_{2}  \circ e_{3}=e_{4}$

\end{longtable}
 
\end{lemma}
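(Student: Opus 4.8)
The plan is to reduce the statement to the classification of nilpotent commutative associative algebras of dimension at most four and then reassemble. The first ingredient is a Wedderburn-type splitting valid for \emph{any} finite-dimensional commutative associative algebra $\mathcal{A}$ over $\mathbb{C}$. Choose an idempotent $e\in\mathcal{A}$ for which the Peirce component $\mathcal{A}_{1}(e)=\{x:ex=x\}$ has maximal dimension. By associativity $e(ex)=(ee)x=ex$, so left multiplication by $e$ is an idempotent operator and $\mathcal{A}=\mathcal{A}_{1}(e)\oplus\mathcal{A}_{0}(e)$ with $\mathcal{A}_{0}(e)=\{x:ex=0\}$; commutativity gives $e(xy)=(ey)x=0$ for $x\in\mathcal{A}_{1}(e)$, $y\in\mathcal{A}_{0}(e)$, so this is a direct sum of ideals. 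Maximality of $e$ forces $\mathcal{A}_{0}(e)$ to contain no nonzero idempotent, for otherwise such an $f$ would be orthogonal to $e$ and $e+f$ an idempotent with $\dim\mathcal{A}_{1}(e+f)>\dim\mathcal{A}_{1}(e)$. Viewing the commutative associative algebra $\mathcal{A}_{0}(e)$ as a shift associative one, Corollary~\ref{idem} then forces it to be a nilalgebra, hence nilpotent (a shift associative nilalgebra over a field of characteristic $0$ is nilpotent). On the other hand $\mathcal{A}_{1}(e)$ is unital with unit $e$, so over $\mathbb{C}$ it is a finite direct product of local unital commutative associative algebras, each of which is the unital hull $\mathbb{C}1\oplus\mathfrak{m}$ of a nilpotent commutative associative algebra $\mathfrak{m}$.

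In dimension four this reduces the problem to listing the nilpotent commutative associative algebras of dimension $\le 4$ and then combining blocks along the partitions $4=\dim\mathcal{A}_{1}(e)+\dim\mathcal{A}_{0}(e)$. I would carry out the nilpotent classification by the central extension / power filtration method (the Skjelbred-Sund scheme mentioned after Proposition~\ref{class}), stratifying a nilpotent algebra $\mathcal{N}$ by the number of generators $d=\dim\mathcal{N}/\mathcal{N}^{2}$. For $d=1$ one gets the unique truncated polynomial algebra $\langle x,x^{2},\dots,x^{k}\rangle$; for $\mathcal{N}^{2}=0$ the zero-multiplication algebra; for $\dim\mathcal{N}^{2}=1$ (which forces $\mathcal{N}^{3}=0$) the algebra is the symmetric central extension of $\mathcal{N}/\mathcal{N}^{2}$ by $\mathbb{C}$ given by a nonzero symmetric bilinear form on $\mathcal{N}/\mathcal{N}^{2}$, classified over $\mathbb{C}$ by its rank. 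In dimension four the only stratum requiring real work is $d=2$ with $\dim\mathcal{N}^{2}=2$: when $\mathcal{N}^{3}=0$ the multiplication descends to a surjective symmetric map $\mathcal{N}/\mathcal{N}^{2}\times\mathcal{N}/\mathcal{N}^{2}\to\mathcal{N}^{2}$, i.e.\ a pencil of binary quadratic forms, and a short analysis of the discriminant of the pencil (two simple roots versus one double root; the identically-zero case cannot occur, by the Lagrange identity) gives exactly two such algebras, while $\dim\mathcal{N}^{3}=1$ yields one further algebra after normalising the two generators. This produces the indecomposable nilpotent algebras $\mathcal{A}_{12}$--$\mathcal{A}_{16}$; every other $4$-dimensional nilpotent commutative associative algebra is a direct sum of lower-dimensional ones, from which a one-dimensional zero-square summand can be split off, so that it has the form $\mathcal{A}_{a}\oplus\mathcal{A}_{b}$ with $\mathcal{A}_{b}^{2}=0$ and $\mathcal{A}_{a}$ as in Lemma~\ref{3-dim asscom}.

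The last step is the reassembly: for each partition $4=\dim\mathcal{A}_{1}(e)+\dim\mathcal{A}_{0}(e)$ one pairs a product of local unital blocks (each the unital hull of a nilpotent algebra of dimension $\le 3$ from the previous step) with a nilpotent algebra of the complementary dimension, normalising the defining parameters by the relevant automorphism group exactly as in the dimension-three computation above, where $\mathrm{Aut}(\mathcal{L}_{1})$ acted on $\mathrm{Z}_{\mathrm{SA}}^{2}$. Every algebra from which a one-dimensional zero-square ideal splits off is recorded in the first alternative of the statement; the remaining ones become, after a choice of basis, $\mathcal{A}_{17}$--$\mathcal{A}_{29}$, with pairwise non-isomorphism checked via the invariants $\dim\mathcal{A}^{2}$, $\dim\mathcal{A}^{3}$, the dimension of the annihilator of $\mathcal{A}$, the number of independent idempotents, and the rank of the induced form on $\mathcal{A}/\mathcal{A}^{2}$. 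The main obstacle is organisational rather than conceptual: the stratum $d=2$, $\dim\mathcal{N}^{2}=2$ together with the many block combinations in the reassembly generates a long list of candidates, and the real labour is verifying that the normal forms are pairwise non-isomorphic and that the list is exhaustive; as elsewhere in the paper, a computer-algebra verification (for instance with the Albert system \cite{Albert1}) is the most reliable way to confirm this bookkeeping.
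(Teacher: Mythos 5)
The paper itself offers no proof of this lemma: it is stated bare, as a classical classification of four-dimensional commutative associative algebras imported from the literature (the same is true of Lemma~\ref{3-dim asscom}), so there is no argument of the authors' to compare yours against. Your sketch supplies the standard derivation that such citations implicitly rest on, and its skeleton is sound: the splitting $\mathcal{A}=\mathcal{A}_{1}(e)\oplus\mathcal{A}_{0}(e)$ for a maximal idempotent is correct (for $x\in\mathcal{A}_1$, $y\in\mathcal{A}_0$ one gets $xy=e(xy)=x(ey)=0$, so the summands are orthogonal ideals), the unital part decomposes into local blocks, the nilpotent classification stratified by $d=\dim\mathcal{N}/\mathcal{N}^2$ is exhaustive, and reassembling the blocks does reproduce exactly $\mathcal{A}_{12}$--$\mathcal{A}_{29}$ (note that the list is not the list of \emph{indecomposable} algebras --- e.g.\ $\mathcal{A}_{12}\cong\mathcal{A}_{04}\oplus\mathcal{A}_{04}$ over $\mathbb{C}$ --- but of algebras admitting no square-zero direct summand, which is what your reassembly produces).

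Two soft spots are worth flagging, both fillable. First, the claim that the stratum $d=2$, $\dim\mathcal{N}^{2}=2$, $\dim\mathcal{N}^{3}=1$ yields exactly one indecomposable algebra is asserted without argument; it is true, but it needs the observation that the induced form $S^{2}(\mathcal{N}/\mathcal{N}^{2})\to\mathcal{N}^{2}/\mathcal{N}^{3}$ cannot have rank $2$ (otherwise $\mathcal{N}^{3}=\langle x^{2}y,xy^{2}\rangle$ would vanish), after which normalising $xy=0$, $y^{2}=\mu x^{3}$ gives $\mathcal{A}_{15}$ or a square-zero summand. Second, in the $\mathcal{N}^{3}=0$, $d=2$ stratum the relevant fact is simply that the kernel of the surjection $S^{2}(\mathcal{N}/\mathcal{N}^{2})\to\mathcal{N}^{2}$ is a single nonzero binary quadratic form, of rank $1$ or $2$ over $\mathbb{C}$; invoking ``the Lagrange identity'' here is a red herring. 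Since the real content of a classification lemma is the exhaustive enumeration and the pairwise non-isomorphism checks, which you defer to bookkeeping and computer verification, this remains a proof sketch rather than a proof --- but it is the right sketch, and more than the paper provides.
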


\begin{theorem}
Let $\mathcal{A}$ be a nontrivial  $4$-dimensional shift associative algebra. Then $\mathcal{A}$ is 
a  commutative associative  algebra given in Lemma \ref{4-dim asscom}, or it is isomorphic to one of the following algebras:

\begin{longtable}{lcllllllllll}
$\mathfrak{a}_{01}$&$ :$&$e_{1}  e_{2}=e_{3}$&$e_{2}  e_{1}=-e_{3}$\\

$\mathfrak{a}_{02}^{\alpha } $&$:$&$e_{1}  e_{1}=e_{3}$&$e_{1} e_{2}=e_{3}$&$e_{2}  e_{1}=-e_{3}$&$e_{2}  e_{2}=\alpha e_{3}$\\

$\mathfrak{a}_{03}$&$ :$&$e_{1}  e_{2}=e_{3}$&$e_{2} e_{1}=-e_{3}$&$e_{1}  e_{4}=e_{3}$&$e_{4}  e_{1}=e_{3}$\\

$\mathfrak{a}_{04}$&$ :$&$e_{1}  e_{2}=e_{3} $&$e_{2} e_{1}=-e_{3} $&$e_{1}  e_{4}=e_{3}$&$e_{4}  e_{1}=e_{3}$&$e_{2}  e_{2}=e_{3}$\\

$\mathfrak{a}_{05} $&$:$&$e_{1}  e_{2}=e_{3}$&$e_{2} e_{1}=-e_{3}$&$e_{4}  e_{4}=e_{3}$\\

$\mathfrak{a}_{06}^{\alpha } $&$:$&$e_{1}  e_{1}=e_{3}$&$e_{1} e_{2}=e_{3} $&$ e_{2}  e_{1}=-e_{3}$&$e_{2}  e_{2}=\alpha e_{3}$&$e_{4}  e_{4}=e_{3}$\\

$\mathfrak{a}_{07}^{\alpha } $&$:$&$e_{1}  e_{1}=e_{4}$&$e_{1}  e_{2}=\left( \alpha
+1\right) e_{3} $&$ e_{2}  e_{1}=\left( \alpha -1\right) e_{3}$\\

$\mathfrak{a}_{08} $&$:$&$e_{1}  e_{1}=e_{4} $&$ e_{1}  e_{2}= e_{3} $&$ e_{2}  e_{1}=- e_{3}$&$e_{2}  e_{2}=e_{3}$ \\

$\mathfrak{a}_{09}$&$:$&$e_{1}  e_{2}=e_{3}+e_{4}$&$ e_{2}  e_{1}=e_{4}-e_{3}$\\

$\mathfrak{a}_{10}^{\alpha }$&$:$&$e_{1}  e_{1}=e_{3}$&$e_{1}  e_{2}=e_{3}+e_{4}$&$e_{2}  e_{1}=e_{4}-e_{3}$&$e_{2}  e_{2}=\alpha e_{3}$\\
 
$\mathfrak{a}_{11} $&$:$&$e_{1}  e_{2}=e_{3} $&$ e_{2}  e_{1}=-e_{3}$&$e_{4}  e_{4}=e_{4}$\\

$\mathfrak{a}_{12}^{\alpha }$&$ :$&$e_{1}  e_{1}=e_{3}$&$e_{1}  e_{2}=e_{3} $&$ e_{2}  e_{1}=-e_{3}$&$e_{2}  e_{2}=\alpha e_{3}$&$e_{4}  e_{4}=e_{4}$\\

%$\mathfrak{a}_{13}^{\alpha }$&$:$&$e_{1}  e_{1}=e_{4}$&$e_{1}  e_{2}=\left(
%\alpha +1\right) e_{3}$&$e_{2}  e_{1}=\left( \alpha -1\right) e_{3}$\\

$\mathfrak{a}_{13}$&$:$&$e_{1}  e_{1}=e_{4}$&$e_{1}  e_{2}=e_{3}$&$e_{2}  e_{1}=-e_{3}$& \\
&&$e_{2}  e_{2}=e_{3}$&$e_{1}  e_{4}=e_{3}$&$e_{4}  e_{1}=e_{3}$\\

%$\mathfrak{a}_{15}$&$:$&$e_{1}  e_{1}=e_{4}$&$e_{1}  e_{2}=e_{3}$&$e_{2}  e_{1}=-e_{3}$&$e_{2}  e_{2}=e_{3}$\\

$\mathfrak{a}_{14}$&$:$&$e_{1}  e_{1}=e_{4}$&$e_{1}  e_{2}=e_{3}$&$e_{2}  e_{1}=-e_{3}$&$e_{1}  e_{4}=e_{3}$&$e_{4}  e_{1}=e_{3}$
 
\end{longtable}

\end{theorem}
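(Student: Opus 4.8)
The plan is to combine the structure theory established above with the extension procedure for realising a shift associative algebra over a prescribed associated nilpotent Lie algebra. First I would split off the commutative case. By the earlier corollary that $(\mathcal{A},[\cdot,\cdot])$ is nilpotent, the associated Lie algebra of a $4$-dimensional shift associative algebra is a $4$-dimensional nilpotent Lie algebra; if it is abelian, then $\mathcal{A}$ is commutative, hence (a commutative shift associative algebra being associative) commutative associative, so $\mathcal{A}$ occurs in Lemma~\ref{4-dim asscom}, whose statement already absorbs the decomposable possibilities via Theorem~\ref{sum}, Proposition~\ref{class} and Definition~\ref{decomposable}. Otherwise $(\mathcal{A},[\cdot,\cdot])$ is one of the two non-abelian $4$-dimensional nilpotent Lie algebras listed above, $\mathcal{L}_{1}$ (with $e_{4}$ central) or $\mathcal{L}_{2}$, and I would invoke the correspondence recorded above: shift associative algebras $\mathcal{A}$ with $\mathcal{A}^{-}\cong\mathcal{L}$ correspond bijectively to the orbits of $\mathrm{Aut}(\mathcal{L})$ on $\mathrm{Z}_{\mathrm{SA}}^{2}(\mathcal{L},\mathcal{L})$, a chosen representative $\theta$ producing the multiplication $x\cdot_{\theta}y=\theta(x,y)+[x,y]$.

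Next I would dispose of $\mathcal{L}_{2}$. Writing a generic symmetric $\theta=(B_{1},B_{2},B_{3},B_{4})$, substituting it into the defining identity of $\mathrm{Z}_{\mathrm{SA}}^{2}$ and comparing, on suitable triples of basis vectors, the $e_{4}$-component of the relation --- where the genuinely $3$-step behaviour $[[e_{1},e_{2}],e_{1}]=-e_{4}$ of $\mathcal{L}_{2}$ enters, something $\mathcal{L}_{1}$ cannot produce --- I expect an obstruction incompatible with symmetry of $\theta$, which would give $\mathrm{Z}_{\mathrm{SA}}^{2}(\mathcal{L}_{2},\mathcal{L}_{2})=\varnothing$. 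Hence $\mathcal{L}_{2}$ supports no shift associative structure and contributes nothing to the list.

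The bulk of the argument is the case $\mathcal{L}=\mathcal{L}_{1}$. Solving the linear system cut out by the $\mathrm{Z}_{\mathrm{SA}}^{2}$-identity, I expect to obtain $B_{1}=B_{2}=0$ together with the statement that $B_{3}$ and $B_{4}$ are arbitrary symmetric bilinear forms on $\langle e_{1},e_{2},e_{4}\rangle$, possibly subject to one further linear compatibility, so that $\mathrm{Z}_{\mathrm{SA}}^{2}(\mathcal{L}_{1},\mathcal{L}_{1})$ becomes a concrete finite-dimensional space of pairs of ternary quadratic forms. The group $\mathrm{Aut}(\mathcal{L}_{1})$ sends $e_{1},e_{2}$ to arbitrary vectors with invertible $(e_{1},e_{2})$-minor $\Delta$, fixes $e_{3}$ up to the scalar $\Delta$, and sends $e_{4}\mapsto\mu e_{4}+\nu e_{3}$; acting on the data through $B_{i}'=\sum_{j}b_{ij}\phi^{t}B_{j}\phi$, it amounts to simultaneous congruence of the pencil $\{B_{3},B_{4}\}$ combined with rescaling of $e_{3}$ and the shear $e_{4}\mapsto\mu e_{4}+\nu e_{3}$. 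I would then normalise step by step --- first bringing $B_{4}$ to one of its congruence normal forms (zero; rank $1$; rank $2$; rank $3$), then using the residual stabiliser to normalise $B_{3}$ --- and translate each orbit representative $\theta$ back into a multiplication table via $x\cdot_{\theta}y=\theta(x,y)+[x,y]$. This case analysis should produce exactly $\mathfrak{a}_{01},\dots,\mathfrak{a}_{14}$, with $\mathfrak{a}_{01}$ and $\mathfrak{a}_{02}^{\alpha}$ reproducing $\mathfrak{a}_{1}$ and $\mathfrak{a}_{2}^{\alpha}$ of Theorem~\ref{3-dim nearly} with $e_{4}$ decoupled.

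Finally I would check that the list is irredundant: the continuous families $\mathfrak{a}_{02}^{\alpha},\mathfrak{a}_{06}^{\alpha},\mathfrak{a}_{07}^{\alpha},\mathfrak{a}_{10}^{\alpha},\mathfrak{a}_{12}^{\alpha}$ are separated by an $\mathrm{Aut}(\mathcal{L}_{1})$-invariant of the pencil $(B_{3},B_{4})$ --- a projective invariant playing the role of the scaling class of the $2\times 2$ symmetric matrix with entries $(\alpha,\beta,\gamma)$ in the $3$-dimensional argument --- while the remaining algebras are told apart by elementary invariants such as $\dim\mathcal{A}^{2}$, $\dim\mathrm{Ann}(\mathcal{A})$, the presence of a nonzero idempotent, and decomposability. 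I expect the main obstacle to be the $\mathcal{L}_{1}$ orbit computation itself: the pencil $\{B_{3},B_{4}\}$ carries enough parameters, and the stabiliser enough structure (notably the interaction of the shear $e_{4}\mapsto\mu e_{4}+\nu e_{3}$ with the $e_{3}$-scaling), that an exhaustive, carefully organised normalisation is unavoidable, and the delicate points are not missing a branch and verifying that each branch is genuinely attained by some $\theta\in\mathrm{Z}_{\mathrm{SA}}^{2}(\mathcal{L}_{1},\mathcal{L}_{1})$ and is not isomorphic to an earlier one.
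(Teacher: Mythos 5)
Your overall framework is the same as the paper's: split off the commutative (abelian associated Lie algebra) case into Lemma~\ref{4-dim asscom}, rule out $\mathcal{L}_{2}$ by showing $\mathrm{Z}_{\mathrm{SA}}^{2}(\mathcal{L}_{2},\mathcal{L}_{2})=\varnothing$, and classify the $\mathcal{L}_{1}$ case by computing $\mathrm{Aut}(\mathcal{L}_{1})$-orbits on $\mathrm{Z}_{\mathrm{SA}}^{2}(\mathcal{L}_{1},\mathcal{L}_{1})$. However, there is a genuine gap at the central step. You describe $\mathrm{Z}_{\mathrm{SA}}^{2}(\mathcal{L}_{1},\mathcal{L}_{1})$ as the solution set of a \emph{linear} system, expect $(B_{3},B_{4})$ to range over (essentially) arbitrary pairs of symmetric forms on $\langle e_{1},e_{2},e_{4}\rangle$ ``possibly subject to one further linear compatibility'', and then propose to normalise the pencil $\{B_{3},B_{4}\}$ by simultaneous congruence, stratifying by the rank of $B_{4}$. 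This mischaracterises the object: the defining condition of $\mathrm{Z}_{\mathrm{SA}}^{2}$ contains the terms $\theta(x,\theta(y,z))$ and $\theta(\theta(z,x),y)$ and is therefore \emph{quadratic} in $\theta$, so the solution set is a reducible affine variety, not a vector space. The paper has to decompose it explicitly into eight families $(\vartheta_{i},\eta_{i})$ whose parameters satisfy rational couplings (for instance $B_{4}$ is forced to contain terms such as $\tfrac{\alpha_{6}^{2}}{\alpha_{5}}\Delta_{4,4}$, so it never attains rank $3$, and $B_{3}$ is tied to $B_{4}$ through terms like $\tfrac{\alpha_{3}\alpha_{6}}{\alpha_{5}}\Delta_{4,4}$). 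Your rank-stratification of $B_{4}$ would both include inadmissible pairs and miss exactly the couplings that generate $\mathfrak{a}_{13}$ and $\mathfrak{a}_{14}$; identifying these components is the real content of the computation, and your plan does not supply it.

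Two smaller remarks. First, the paper shortcuts a large part of the case analysis by observing that every $2$-step nilpotent algebra is automatically shift associative, so $\mathfrak{a}_{01}$--$\mathfrak{a}_{10}^{\alpha}$ are imported wholesale from the known classification of $4$-dimensional non-commutative $2$-step nilpotent algebras in \cite{kppv}, and only the non-$2$-step-nilpotent branches (reducing everything to the families $(\vartheta_{3},\eta_{3})$ and $(\vartheta_{4},\eta_{4})$) are worked out by hand; you would rederive all fourteen algebras from scratch, which is legitimate but much heavier. Second, your treatment of $\mathcal{L}_{2}$ (``I expect an obstruction'') is asserted rather than proved, but the paper does the same, so this is not a point of divergence.
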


\begin{proof}
If $(\mathcal{A}, [\cdot,\cdot])=\mathcal{L}_{2}$,
then $\mathrm{Z}_{\mathrm{SA}}^{2}\left( \mathcal{L}_{2},\mathcal{L}_{2}\right) = \mathcal{\varnothing }$ and therefore $\mathcal{L}_{2}$
does not have shift associative structures. Finally, if $(\mathcal{A}, [\cdot,\cdot])=\mathcal{L}_{1}$,
then $\mathrm{Z}_{\mathrm{SA}}^{2}\left( \mathcal{L}_{1},\mathcal{L}_{1}\right) \neq \mathcal{\varnothing }$ and therefore $\mathcal{L}_{1}$
has shift associative structures. Furthermore, an automorphism $\phi \in \text{Aut}\left( \mathcal{L}_{1}\right) $, is
given by an invertible matrix of the form:%
\begin{equation*}
\phi=
\begin{pmatrix}
a_{11} & a_{12} & 0 & 0 \\ 
a_{21} & a_{22} & 0 & 0 \\ 
a_{31} & a_{32} & \delta  & a_{34} \\ 
a_{41} & a_{42} & 0 & a_{44}%
\end{pmatrix}%
, \mbox{ \ where \ }\delta =a_{11}a_{22}-a_{12}a_{21}.
\end{equation*}

Now, choose an arbitrary element $\theta
=\left( B_{1},B_{2},B_{3}, B_{4}\right) \in \mathrm{Z}_{\mathrm{SA}}^{2}\left( 
\mathcal{L}_{1},\mathcal{L}_{1}\right) $. Then 
\begin{center}
    $B_{1}=B_{2}=0$ and $\left( B_{3},B_{4}\right) \in \{\left( \vartheta
_{i},\eta _{i}\right)\}_{ 1 \leq i \leq 8},$
\end{center}where

\begin{enumerate}

\item $(\vartheta _{1}, \eta _{1}) \ =\ \big(\alpha _{1}\Delta _{1,1}+\alpha _{2}\Delta
_{1,2}+\alpha _{3}\Delta _{1,4}+\alpha _{4}\Delta _{2,2}+\alpha _{5}\Delta
_{2,4}+\alpha _{6}\Delta _{4,4}; \ 0\big).$

\item $(\vartheta _{2}, \eta _{2}) \ =\ \big(\alpha _{1}\Delta _{1,1}+\alpha _{2}\Delta
_{1,2}+\alpha _{3}\Delta _{2,2}; \ \alpha _{4}\Delta _{1,1}+\alpha
_{5}\Delta _{1,2}+\alpha _{6}\Delta _{2,2}\big)_{\eta _{2}\neq 0}.$

\item $(\vartheta _{3}, \eta _{3}) \ =\ \big(\alpha _{1}\Delta _{1,1}+\alpha _{2}\Delta
_{1,2}+\alpha _{3}\Delta _{2,2}+\alpha _{4}\Delta _{4,4};\ \alpha
_{5}\Delta _{4,4}\big)_{\alpha_5\neq 0}.$

\item $(\vartheta _{4}, \eta _{4}) \ =\ \big(\alpha _{1}\Delta _{1,1}+\alpha _{2}\Delta
_{1,2}+\alpha _{3}\Delta _{1,4}+\alpha _{4}\Delta _{2,2}+\frac{\alpha
_{3}\alpha _{6}}{\alpha _{5}}\Delta _{4,4}; \ \alpha _{5}\Delta
_{1,1}+\alpha _{6}\Delta _{1,4}+\frac{\alpha _{6}^{2}}{\alpha _{5}}\Delta
_{4,4}\big)_{\alpha _{5}\neq 0}$.

\item $(\vartheta _{5}, \eta _{5}) \ =\ \big(\alpha _{1}\Delta _{1,1}+\alpha _{2}\Delta
_{1,2}+\alpha _{3}\Delta _{1,4}+\alpha _{4}\Delta _{2,2}+\alpha _{5}\Delta
_{2,4}+\frac{\alpha _{3}\alpha _{7}}{\alpha _{6}}\Delta _{4,4};$ 
   \begin{flushright}$\frac{\alpha _{3}\alpha _{6}}{\alpha _{5}}\Delta _{1,1}+\alpha _{6}\Delta
_{1,2}+\frac{\alpha _{3}\alpha _{7}}{\alpha _{5}}\Delta _{1,4}+\frac{\alpha
_{5}\alpha _{6}}{\alpha _{3}}\Delta _{2,2}+\alpha _{7}\Delta _{2,4}+\frac{%
\alpha _{3}\alpha _{7}^{2}}{\alpha _{5}\alpha _{6}}\Delta _{4,4}\big)_{
\alpha _{3}\alpha _{5}\alpha _{6}\neq 0}$.  \end{flushright}

\item $(\vartheta _{6}, \eta _{6}) \ =\ \big(\alpha _{1}\Delta _{1,1}+\alpha _{2}\Delta
_{1,2}+\alpha _{3}\Delta _{2,2}+\alpha _{4}\Delta _{2,4}+\frac{\alpha
_{4}\alpha _{6}}{\alpha _{5}}\Delta _{4,4}; \ \alpha _{5}\Delta
_{2,2}+\alpha _{6}\Delta _{2,4}+\frac{\alpha _{6}^{2}}{\alpha _{5}}\Delta
_{4,4}\big)_{\alpha _{5}\neq 0}$.

\begin{comment}
This is a special case of $(\vartheta _{4}, \eta _{4})$
\item $(\vartheta _{7}, \eta _{7}) \ =\ \big(\alpha _{1}\Delta _{1,1}+\alpha _{2}\Delta
_{1,2}+\alpha _{3}\Delta _{2,2}; \ \alpha _{4}\Delta _{1,1}+\alpha
_{5}\Delta _{1,4}+\frac{\alpha _{5}^{2}}{\alpha _{4}}\Delta _{4,4}\big)_{\alpha _{4}\neq 0}$.
\end{comment}

\item $(\vartheta _{7}, \eta _{7}) \ =\ \big(\alpha _{1}\Delta _{1,1}+\alpha _{2}\Delta
_{1,2}+\alpha _{3}\Delta _{2,2};$  \begin{flushright}$ \frac{\alpha _{4}\alpha _{5}}{%
\alpha _{6}}\Delta _{1,1}+\alpha _{4}\Delta _{1,2}+\alpha _{5}\Delta _{1,4}+%
\frac{\alpha _{4}\alpha _{6}}{\alpha _{5}}\Delta _{2,2}+\alpha _{6}\Delta
_{2,4}+\frac{\alpha _{5}\alpha _{6}}{\alpha _{4}}\Delta _{4,4}\big)_{\alpha
_{4}\alpha _{5}\alpha _{6}\neq 0}$.\end{flushright}

\item $(\vartheta _{8}, \eta _{8}) \ =\ \big(\alpha _{1}\Delta _{1,1}+\alpha _{2}\Delta
_{1,2}+\alpha _{3}\Delta _{2,2}; \  \alpha _{4}\Delta _{2,2}+\alpha
_{5}\Delta _{2,4}+\frac{\alpha _{5}^{2}}{\alpha _{4}}\Delta _{4,4}\big)_{\alpha _{4}\neq 0}$.

\end{enumerate}

Let us note, that each $2$-step nilpotent algebra is shift associative. 
The algebraic classification of $4$-dimensional non-commutative $2$-step nilpotent algebras can be found in \cite{kppv}, it gives our algebras $\mathfrak{a}_{01} - \mathfrak{a}_{10}^\alpha.$
Hence, we are interested in the non-$2$-step nilpotent case. 

Then we have the following cases:

\begin{enumerate}
\item $\left( B_{3},B_{4}\right) =\left( \vartheta _{1},\eta _{1}\right) .$ Then $(\mathcal{L}_{1},\cdot _{\theta })$ is a $2$-step nilpotent algebra since $\mathcal{L}_{1}\cdot _{\theta }(\mathcal{L}_{1}\cdot _{\theta }\mathcal{L}_{1})=(\mathcal{L}_{1}\cdot _{\theta }\mathcal{L}_{1})\cdot _{\theta }\mathcal{L}_{1}=0$.

\item $\left( B_{3},B_{4}\right) =\left( \vartheta _{2},\eta _{2}\right) $. Then $(\mathcal{L}_{1},\cdot _{\theta })$ is a $2$-step nilpotent algebra.

\item $\left( B_{3},B_{4}\right) =\left( \vartheta _{3},\eta _{3}\right) $. Then $(\mathcal{L}_{1},\cdot _{\theta })$ is not a $2$-step nilpotent algebra since $e_{4}\cdot _{\theta }e_{4}=\alpha_{5}e_{4}$ and $\alpha_{5} \neq 0$.  
Let $\phi $ be the following automorphism:%
\begin{equation*}
\phi =%
\begin{pmatrix}
a_{11} & a_{12} & 0 & 0 \\ 
a_{21} & a_{22} & 0 & 0 \\ 
0 & 0 & a_{11}a_{22}-a_{12}a_{21} & \frac{\alpha _{4}}{\alpha _{5}^{2}} \\ 
0 & 0 & 0 & \frac{1}{\alpha _{5}}%
\end{pmatrix}%
.
\end{equation*}%
Then $\theta \ast \phi =\left( 0,0,\alpha ^{\prime }\Delta _{1,1}+\beta
^{\prime }\Delta _{1,2}+\gamma ^{\prime }\Delta _{2,2},\Delta _{4,4}\right) $
where $\alpha ^{\prime },\beta ^{\prime },\gamma ^{\prime }$ are as given in
the proof of Theorem \ref{3-dim nearly}. So we get the algebras:%

\begin{longtable}{lcllllllllll} 
 
$\mathfrak{a}_{11} $&$:$&$e_{1}  e_{2}=e_{3} $&$ e_{2}  e_{1}=-e_{3}$&$e_{4}  e_{4}=e_{4}$\\

$\mathfrak{a}_{12}^{\alpha }$&$ :$&$e_{1}  e_{1}=e_{3}$&$e_{1}  e_{2}=e_{3} $&$ e_{2}  e_{1}=-e_{3}$&$e_{2}  e_{2}=\alpha e_{3}$&$e_{4}  e_{4}=e_{4}$\\

\end{longtable}

\item $\left( B_{3},B_{4}\right) =\left( \vartheta _{4},\eta _{4}\right) $. Then $(\mathcal{L}_{1},\cdot _{\theta })$ is a $2$-step nilpotent algebra if $\alpha_{3}=\alpha_{6}=0$. So we may assume $\left(\alpha_{3},\alpha_{6}\right) \neq \left(0,0\right)$.

\begin{itemize}

\item $\alpha _{6}\neq 0$. We choose $\phi $ as follows: 
\begin{equation*}
\phi =%
\begin{pmatrix}
1 & 0 & 0 & 0 \\ 
0 & 1 & 0 & 0 \\ 
0 & 0 & 1 & 0 \\ 
-\frac{\alpha _{5}}{\alpha _{6}} & 0 & 0 & 1%
\end{pmatrix}%
.
\end{equation*}%
Then $\theta \ast \phi =\left( 0,0,\beta _{1}\Delta _{1,1}+\beta _{2}\Delta
_{1,2}+\beta _{3}\Delta _{2,2}+\beta _{4}\Delta _{4,4},\beta _{5}\Delta
_{4,4}\right) $ for some $\beta _{1},\beta _{2},\beta _{3},\beta _{4}\in 
\mathbb{C}$ and $\beta _{5}\in \mathbb{C}^{\ast }$. Hence, we are back in
the case $\left( B_{3},B_{4}\right) =\left( \vartheta _{3},\eta _{3}\right) $%
.

\item $\alpha _{6}=0$. Then we may assume $\alpha _{3}\neq 0$. If $\alpha _{4}\neq 0$, we choose $\phi $ as
follows:%
\begin{equation*}
\phi =%
\begin{pmatrix}
\frac{1}{\alpha _{3}\alpha _{4}\alpha _{5}} & 0 & 0 & 0 \\ 
-\frac{\alpha _{2}}{\alpha _{3}\alpha _{4}^{2}\alpha _{5}} & \frac{1}{\alpha
_{3}\alpha _{4}^{2}\alpha _{5}} & 0 & 0 \\ 
0 & 0 & \frac{1}{\alpha _{3}^{2}\alpha _{4}^{3}\alpha _{5}^{2}} & \frac{%
\alpha _{1}\alpha _{4}-\alpha _{2}^{2}}{\alpha _{3}^{2}\alpha _{4}^{3}\alpha
_{5}^{2}} \\ 
0 & 0 & 0 & \frac{1}{\alpha _{3}^{2}\alpha _{4}^{2}\alpha _{5}}%
\end{pmatrix}%
.
\end{equation*}%
Then $\theta \ast \phi =\left( 0,0,\Delta _{1,4}+\Delta _{2,2},\Delta
_{1,1}\right) $. So we get the algebra:

\begin{longtable}{lcllllllllll} 
$\mathfrak{a}_{13}$&$:$&$e_{1}  e_{1}=e_{4}$&$e_{1}  e_{2}=e_{3}$&$e_{2}  e_{1}=-e_{3}$&$e_{2}  e_{2}=e_{3}$&$e_{1}  e_{4}=e_{3}$&$e_{4}  e_{1}=e_{3}$\\

\end{longtable}If $\alpha _{4}=0$, we choose $\phi $ as follows:%
\begin{equation*}
\phi =%
\begin{pmatrix}
1 & 0 & 0 & 0 \\ 
0 & \alpha _{3}\alpha _{5} & 0 & 0 \\ 
0 & 0 & \alpha _{3}\alpha _{5} & \alpha _{1} \\ 
0 & -\alpha _{2}\alpha _{5} & 0 & \alpha _{5}%
\end{pmatrix}%
.
\end{equation*}%
Then $\theta \ast \phi =\left( 0,0,\Delta _{1,4},\Delta _{1,1}\right) $. So
we get the algebra:

\begin{longtable}{lcllllllllll} 

$\mathfrak{a}_{14}$&$:$&$e_{1}  e_{1}=e_{4}$&$e_{1}  e_{2}=e_{3}$&$e_{2}  e_{1}=-e_{3}$&$e_{1}  e_{4}=e_{3}$&$e_{4}  e_{1}=e_{3}$
 
\end{longtable}

\end{itemize}

\item $\left( B_{3},B_{4}\right) =\left( \vartheta _{5},\eta _{5}\right) $. Then $(\mathcal{L}_{1},\cdot _{\theta })$ is not a $2$-step nilpotent algebra.

\begin{itemize}
\item If $\alpha _{7}=0$, we choose $\phi $ as follows%
\begin{equation*}
\phi =%
\begin{pmatrix}
1 & -\frac{\alpha _{5}}{\alpha _{3}} & 0 & 0 \\ 
0 & 1 & 0 & 0 \\ 
0 & 0 & 1 & 0 \\ 
-\frac{\alpha _{1}}{2\alpha _{3}} & -\frac{\alpha _{2}\alpha _{3}-\alpha
_{1}\alpha _{5}}{\alpha _{3}^{2}} & 0 & \frac{\alpha _{3}}{\alpha _{5}}%
\alpha _{6}%
\end{pmatrix}%
.
\end{equation*}%
Then $\theta \ast \phi =\allowbreak \left( 0,0,\beta _{1}\Delta _{2,2}+\beta
_{2}\Delta _{1,4},\Delta _{1,1}\right) $ for some $\beta _{1}\in \mathbb{C}$ and $\beta _{2}\in \mathbb{C}^{\ast }$. Therefore, we are back in the case $\left( B_{3},B_{4}\right)
=\left( \vartheta _{4},\eta _{4}\right) $.

\item If $\alpha _{7}\neq 0$, we choose $\phi $ as follows%
\begin{equation*}
\phi =%
\begin{pmatrix}
1 & 0 & 0 & 0 \\ 
0 & 1 & 0 & 0 \\ 
0 & 0 & 1 & 0 \\ 
-\frac{\alpha _{3}\alpha _{6}}{\alpha _{3}\alpha _{7}} & -\frac{\alpha
_{5}\alpha _{6}}{\alpha _{3}\alpha _{7}} & 0 & 1%
\end{pmatrix}%
.
\end{equation*}%
Then $\theta \ast \phi =\left( 0,0,\beta _{1}\Delta _{1,1}+\beta _{2}\Delta
_{1,2}+\beta _{3}\Delta _{2,2}+\beta _{4}\Delta _{4,4},\beta _{5}\Delta
_{4,4}\right) $ for some $\beta _{1},\beta _{2},\beta _{3}\in \mathbb{C}$ and $\beta _{4},\beta _{5}\in \mathbb{C}^{\ast }$. So, we are back in the case $\left( B_{3},B_{4}\right) =\left(
\vartheta _{3},\eta _{3}\right) $.
\end{itemize}

\item $\left( B_{3},B_{4}\right) =\left( \vartheta _{6},\eta _{6}\right) $. We choose $\phi $ as follows:%
\begin{equation*}
\phi =%
\begin{pmatrix}
0 & 1 & 0 & 0 \\ 
1 & 0 & 0 & 0 \\ 
0 & 0 & -1 & 0 \\ 
0 & 0 & 0 & 1%
\end{pmatrix}%
.
\end{equation*}%
Then we have:
\begin{equation*}
\theta \ast \phi =\left( 0,0,-\alpha _{3}\Delta _{1,1}-\alpha _{2}\Delta
_{1,2}-\alpha _{1}\Delta _{2,2}-\alpha _{4}\Delta _{1,4}-\frac{\alpha _{4}\alpha _{6}}{%
\alpha _{5}}\Delta _{4,4},\alpha _{5}\Delta _{1,1}+\alpha
_{6}\Delta _{1,4}+\frac{\alpha _{6}^{2}}{\alpha _{5}}\Delta _{4,4}\right) .
\end{equation*}%
Thus, we are back in the case $\left( B_{3},B_{4}\right) =\left( \vartheta
_{4},\eta _{4}\right) $.

\item $\left( B_{3},B_{4}\right) =\left( \vartheta _{7},\eta _{7}\right) $.
Choose $\phi $ as follows:%
\begin{equation*}
\begin{pmatrix}
1 & 0 & 0 & 0 \\ 
0 & 1 & 0 & 0 \\ 
0 & 0 & 1 & 0 \\ 
-\frac{\alpha _{4}}{\alpha _{6}} & -\frac{\alpha _{4}}{\alpha _{5}} & 0 & 1%
\end{pmatrix}%
\end{equation*}%
Then $\theta \ast \phi =\left( 0,0,\beta _{1}\Delta _{1,1}+\beta _{2}\Delta
_{1,2}+\beta _{3}\Delta _{2,2},\beta _{4}\Delta _{4,4}\right) $ for some $%
\beta _{1},\beta _{2},\beta _{3}\in \mathbb{C}
$ and $\beta _{4}\in \mathbb{C}^{\ast }$. Hence, we are back in the case $\left( B_{3},B_{4}\right) =\left(
\vartheta _{3},\eta _{3}\right) $.

\item $\left( B_{3},B_{4}\right) =\left( \vartheta _{8},\eta _{8}\right) $.
We may assume $\alpha _{5}\neq 0$ since otherwise we are in the case $\left(
B_{3},B_{4}\right) =\left( \vartheta _{6},\eta _{6}\right) $. Choose $\phi $
as follows:%
\begin{equation*}
\phi =%
\begin{pmatrix}
1 & 0 & 0 & 0 \\ 
0 & 1 & 0 & 0 \\ 
0 & 0 & 1 & 0 \\ 
0 & -\frac{\alpha _{4}}{\alpha _{5}} & 0 & 1%
\end{pmatrix}%
.
\end{equation*}%
Then $\theta \ast \phi =\left( 0,0,\beta _{1}\Delta _{1,1}+\beta _{2}\Delta
_{1,2}+\beta _{3}\Delta _{2,2},\beta _{4}\Delta _{4,4}\right) $ for some $%
\beta _{1},\beta _{2},\beta _{3}\in \mathbb{C} $ and $\beta _{4}\in \mathbb{C}^{\ast }$. Hence, we are back in the case $\left( B_{3},B_{4}\right) =\left(
\vartheta _{3},\eta _{3}\right) $.

\end{enumerate}

\end{proof}

\begin{corollary}\label{4dim}
    Each $4$-dimensional shift associative algebra is 
    cyclic associative.
\end{corollary}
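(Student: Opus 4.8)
The plan is to read the statement off the explicit classification proved in the theorem above, together with the trivial remark that a shift associative algebra is cyclic associative exactly when it is also associative (i.e. when $(x,y,z)=0$ identically). By that theorem, a nontrivial $4$-dimensional shift associative algebra is either one of the commutative associative algebras listed in Lemma~\ref{4-dim asscom} or else isomorphic to one of $\mathfrak{a}_{01},\dots,\mathfrak{a}_{14}$ (including the parametric families); the algebra with zero multiplication is trivially cyclic associative. Hence it suffices to verify cyclic associativity on each algebra in these two lists.

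For the commutative associative algebras there is nothing to prove: associativity gives $(xy)z=x(yz)$, and then $y(zx)=(yz)x=(zy)x=x(zy)=x(yz)$ by commutativity and associativity, so the three products in the definition of a cyclic associative algebra coincide. For $\mathfrak{a}_{01}$--$\mathfrak{a}_{10}^{\alpha}$ I would use the fact, already invoked in the proof of the classification, that these are precisely the $4$-dimensional non-commutative $2$-step nilpotent algebras of \cite{kppv}; in any $2$-step nilpotent algebra every product of three elements vanishes, so $(xy)z=x(yz)=y(zx)=0$ and cyclic associativity is automatic. For $\mathfrak{a}_{11}$ and $\mathfrak{a}_{12}^{\alpha}$ I would observe that they split as direct sums of ideals $\langle e_1,e_2,e_3\rangle\oplus\langle e_4\rangle$, where the first summand is the $3$-dimensional $2$-step nilpotent algebra $\mathfrak{a}_1$, respectively $\mathfrak{a}_2^{\alpha}$, of Theorem~\ref{3-dim nearly}, and the second is the one-dimensional unital algebra; since a direct sum of cyclic associative algebras is cyclic associative and both summands are, so is the whole algebra. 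Finally, for $\mathfrak{a}_{13}$ and $\mathfrak{a}_{14}$ one checks associativity directly: in both algebras $\mathcal A^{4}=0$, so the associator is a trilinear map into $\mathcal A^{3}=\langle e_3\rangle$, and the only product of three basis vectors that is nonzero is $(e_1e_1)e_1=e_1(e_1e_1)=e_3$; hence $(x,y,z)=0$ for all $x,y,z$, and being shift associative and associative each of $\mathfrak{a}_{13},\mathfrak{a}_{14}$ is cyclic associative.

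I do not expect a genuine obstacle: the argument is bookkeeping built on the classification. The one point requiring care is to see that the four cases above (commutative associative; $2$-step nilpotent; $\mathfrak{a}_{11},\mathfrak{a}_{12}^{\alpha}$; $\mathfrak{a}_{13},\mathfrak{a}_{14}$) really exhaust the list, which is clear from the structure of the classification proof: the subcases $(\vartheta_1,\eta_1)$ and $(\vartheta_2,\eta_2)$ yield $2$-step nilpotent algebras, $(\vartheta_3,\eta_3)$ yields $\mathfrak{a}_{11}$ and $\mathfrak{a}_{12}^{\alpha}$, $(\vartheta_4,\eta_4)$ yields $\mathfrak{a}_{13}$ and $\mathfrak{a}_{14}$, and $(\vartheta_5,\eta_5)$--$(\vartheta_8,\eta_8)$ all reduce to the preceding subcases. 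An alternative, more structural route would use Theorem~\ref{sum} and Proposition~\ref{class} to decompose $\mathcal A$ into indecomposable ideals, each being either a local unital commutative associative algebra (hence cyclic associative) or a nilpotent shift associative algebra of dimension $\le 4$; a nilpotent such algebra of nilpotency index $5$ is generated by a single element, hence commutative associative by power-associativity, while those with $\mathcal A^{4}=0$ are handled exactly as the $\mathfrak{a}_{13},\mathfrak{a}_{14}$ computation — but this does not shorten the work, so I would present the classification-based proof.
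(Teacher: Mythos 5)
Your proof is correct and takes essentially the same route as the paper, which states the corollary as an immediate consequence of the dimension-four classification and leaves the case-by-case verification implicit. Your explicit checks (commutative associative algebras, the $2$-step nilpotent algebras $\mathfrak{a}_{01}$--$\mathfrak{a}_{10}^{\alpha}$, the split algebras $\mathfrak{a}_{11},\mathfrak{a}_{12}^{\alpha}$, and the direct computation for $\mathfrak{a}_{13},\mathfrak{a}_{14}$) supply exactly the bookkeeping the paper omits.
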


A non-associative shift associative algebra of dimension $6$ is given in 
\cite[Example 3.17]{BBR}. The next observation is correcting the bound of minimality of the dimension of non-associative shift associative algebras.

\begin{proposition}
    
    The minimal  dimension of non-associative  
    shift associative algebra is $5.$ 
\end{proposition}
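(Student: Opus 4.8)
The plan is to prove the statement in two halves: a lower bound asserting that every shift associative algebra of dimension at most $4$ is in fact associative, and an explicit $5$-dimensional example showing the bound is attained.

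For the lower bound I would appeal to the low-dimensional classification obtained in this section. By Lemma \ref{2-dim asscom}, Theorem \ref{3-dim nearly} and Corollary \ref{4dim}, every shift associative algebra $\mathcal{A}$ with $\dim\mathcal{A}\le 4$ is one of the following: a commutative associative algebra; one of the $3$-dimensional algebras $\mathfrak{a}_1$ or $\mathfrak{a}_2^{\alpha}$; or (in dimension $4$) a cyclic associative algebra. A commutative associative algebra is associative; for $\mathfrak{a}_1$ and $\mathfrak{a}_2^{\alpha}$ one has $\mathcal{A}^2\subseteq\langle e_3\rangle$ with $e_3\mathcal{A}=\mathcal{A}e_3=0$, so these are $2$-step nilpotent and hence all their associators vanish; and in a cyclic associative algebra the identity $(xy)z=x(yz)$ holds by definition, which is exactly associativity. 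Thus no shift associative algebra of dimension $\le 4$ can be non-associative.

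For the existence part I would exhibit the $5$-dimensional algebra $\mathcal{A}$ with basis $e_1,\dots,e_5$ whose only nonzero products of basis vectors are
$$e_1e_2=e_3,\qquad e_2e_1=e_4,\qquad e_1e_3=e_5,\qquad e_4e_1=e_5.$$
First I would verify $(xy)z=y(zx)$ on all basis triples: since $e_3$ and $e_5$ act as zero on the left and $e_4,e_5$ act as zero on the right, for a triple $(x,y,z)$ of basis vectors the left-hand side $(xy)z$ can be nonzero only when $xy=e_4$, i.e. $(x,y)=(e_2,e_1)$ and $z=e_1$, while the right-hand side $y(zx)$ can be nonzero only when $zx=e_3$, i.e. $(z,x)=(e_1,e_2)$ and $y=e_1$; both cases are the single triple $(e_2,e_1,e_1)$, on which both sides equal $e_5$, so the identity holds everywhere. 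Then I would observe that $\mathcal{A}$ is not associative, since $(e_1,e_1,e_2)=(e_1e_1)e_2-e_1(e_1e_2)=0-e_1e_3=-e_5\ne 0$; equivalently, $\mathcal{A}$ is shift associative but not cyclic associative. Combining this with the lower bound gives that the minimal dimension is $5$.

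The only genuinely delicate point — and the structural explanation for why $5$ is both necessary and sufficient — is locating a correct example: the naive attempt in dimension $4$ with generators $a,b$ and nonzero products $ab$ and $a(ab)$ but $ba=0$ collapses, because the identity $(xy)z=y(zx)$ forces $a(ab)=(ba)a=0$. Promoting $ba$ to an independent fifth basis vector is precisely what allows $a(ab)=(ba)a$ to be a nonzero element of the algebra, which is why non-associativity first becomes possible in dimension $5$. I would present the verification above in this light; beyond that the argument is routine bookkeeping on the multiplication table.
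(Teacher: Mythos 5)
Your proof is correct and follows essentially the same route as the paper: the lower bound comes from Corollary \ref{4dim} (every shift associative algebra of dimension at most $4$ is cyclic associative, hence associative), and the bound is attained by an explicit $5$-dimensional two-generated nilpotent example. Your example ($e_1e_2=e_3$, $e_2e_1=e_4$, $e_1e_3=e_5$, $e_4e_1=e_5$) differs from the paper's ($e_1e_1=e_3$, $e_1e_2=e_4$, $e_2e_3=e_5$, $e_4e_1=e_5$), but it is of the same type and your verification that it is shift associative yet not associative checks out.
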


\begin{proof}
Thanks to Corollary \ref{4dim} and \cite[Example 3.17]{BBR}, 
we have to find a $5$-dimensional non-associative shift associative algebra.
One of these examples is given in the following multiplication table below :
\begin{longtable}{rclrclrclrcl}
$e_1 e_1 =e_3$ &
$e_1e_2 =e_4$ & 
$e_2 e_3 = e_5$& 
$ e_4e_1= e_5$
\end{longtable}

\end{proof}

\subsection{The geometric classification of shift associative  algebras}

\
The study of varieties of non-associative algebras from a geometric point of view has a long story 
(see, \cite{l24,MS,k23,fkkv,GRH,GRH3,akm,KM14,kppv,BC99, ale, gkp,ikv17} and references therein).

\noindent {\bf Definitions and notation.}
Given an $n$-dimensional vector space $\mathbb V$, the set ${\rm Hom}(\mathbb V \otimes \mathbb V,\mathbb V) \cong \mathbb V^* \otimes \mathbb V^* \otimes \mathbb V$ is a vector space of dimension $n^3$. This space has the structure of the affine variety $\mathbb{C}^{n^3}.$ Indeed, let us fix a basis $e_1,\dots,e_n$ of $\mathbb V$. Then any $\mu\in {\rm Hom}(\mathbb V \otimes \mathbb V,\mathbb V)$ is determined by $n^3$ structure constants $c_{ij}^k\in\mathbb{C}$ such that
$\mu(e_i\otimes e_j)=\sum\limits_{k=1}^nc_{ij}^ke_k$. A subset of ${\rm Hom}(\mathbb V \otimes \mathbb V,\mathbb V)$ is {\it Zariski-closed} if it can be defined by a set of polynomial equations in the variables $c_{ij}^k$ ($1\le i,j,k\le n$).

Let $T$ be a set of polynomial identities.
The set of algebra structures on $\mathbb V$ satisfying polynomial identities from $T$ forms a Zariski-closed subset of the variety ${\rm Hom}(\mathbb V \otimes \mathbb V,\mathbb V)$. We denote this subset by $\mathbb{L}(T)$.
The general linear group ${\rm GL}(\mathbb V)$ acts on $\mathbb{L}(T)$ by conjugations:
$$ (g * \mu )(x\otimes y) = g\mu(g^{-1}x\otimes g^{-1}y)$$
for $x,y\in \mathbb V$, $\mu\in \mathbb{L}(T)\subset {\rm Hom}(\mathbb V \otimes\mathbb V, \mathbb V)$ and $g\in {\rm GL}(\mathbb V)$.
Thus, $\mathbb{L}(T)$ is decomposed into ${\rm GL}(\mathbb V)$-orbits that correspond to the isomorphism classes of algebras.
Let ${\mathcal O}(\mu)$ denote the orbit of $\mu\in\mathbb{L}(T)$ under the action of ${\rm GL}(\mathbb V)$ and $\overline{{\mathcal O}(\mu)}$ denote the Zariski closure of ${\mathcal O}(\mu)$.

Let $\bf A$ and $\bf B$ be two $n$-dimensional algebras satisfying the identities from $T$, and let $\mu,\lambda \in \mathbb{L}(T)$ represent $\bf A$ and $\bf B$, respectively.
We say that $\bf A$ degenerates to $\bf B$ and write $\bf A\to \bf B$ if $\lambda\in\overline{{\mathcal O}(\mu)}$.
Note that in this case we have $\overline{{\mathcal O}(\lambda)}\subset\overline{{\mathcal O}(\mu)}$. Hence, the definition of degeneration does not depend on the choice of $\mu$ and $\lambda$. If $\bf A\not\cong \bf B$, then the assertion $\bf A\to \bf B$ is called a {\it proper degeneration}. We write $\bf A\not\to \bf B$ if $\lambda\not\in\overline{{\mathcal O}(\mu)}$.

Let $\bf A$ be represented by $\mu\in\mathbb{L}(T)$. Then  $\bf A$ is  {\it rigid} in $\mathbb{L}(T)$ if ${\mathcal O}(\mu)$ is an open subset of $\mathbb{L}(T)$.
 Recall that a subset of a variety is called irreducible if it cannot be represented as a union of two non-trivial closed subsets.
 A maximal irreducible closed subset of a variety is called an {\it irreducible component}.
It is well known that any affine variety can be represented as a finite union of its irreducible components in a unique way.
The algebra $\bf A$ is rigid in $\mathbb{L}(T)$ if and only if $\overline{{\mathcal O}(\mu)}$ is an irreducible component of $\mathbb{L}(T)$.

\medskip

\noindent {\bf Method of the description of degenerations of algebras.} In the present work we use the methods applied to Lie algebras in \cite{GRH}.
First of all, if $\bf A\to \bf B$ and $\bf A\not\cong \bf B$, then $\mathfrak{Der}(\bf A)<\mathfrak{Der}(\bf B)$, where $\mathfrak{Der}(\bf A)$ is the   algebra of derivations of $\bf A$. We compute the dimensions of algebras of derivations and check the assertion $\bf A\to \bf B$ only for such $\bf A$ and $\bf B$ that $\mathfrak{Der}(\bf A)<\mathfrak{Der}(\bf B)$.

To prove degenerations, we construct families of matrices parametrized by $t$. Namely, let $\bf A$ and $\bf B$ be two algebras represented by the structures $\mu$ and $\lambda$ from $\mathbb{L}(T)$ respectively. Let $e_1,\dots, e_n$ be a basis of $\mathbb  V$ and $c_{ij}^k$ ($1\le i,j,k\le n$) be the structure constants of $\lambda$ in this basis. If there exist $a_i^j(t)\in\mathbb{C}$ ($1\le i,j\le n$, $t\in\mathbb{C}^*$) such that $E_i^t=\sum\limits_{j=1}^na_i^j(t)e_j$ ($1\le i\le n$) form a basis of $\mathbb V$ for any $t\in\mathbb{C}^*$, and the structure constants of $\mu$ in the basis $E_1^t,\dots, E_n^t$ are such rational functions $c_{ij}^k(t)\in\mathbb{C}[t]$ that $c_{ij}^k(0)=c_{ij}^k$, then $\bf A\to \bf B$.
In this case  $E_1^t,\dots, E_n^t$ is called a {\it parametrized basis} for $\bf A\to \bf B$.
In  case of  $E_1^t, E_2^t, \ldots, E_n^t$ is a {\it parametric basis} for ${\bf A}\to {\bf B},$ it will be denoted by
${\bf A}\xrightarrow{(E_1^t, E_2^t, \ldots, E_n^t)} {\bf B}$. 
To simplify our equations, we will use the notation $A_i=\langle e_i,\dots,e_n\rangle,\ i=1,\ldots,n$ and write simply $A_pA_q\subset A_r$ instead of $c_{ij}^k=0$ ($i\geq p$, $j\geq q$, $k< r$).

%If the number of orbits under the action of $GL(\mathbb V)$ on  $\mathbb{L}(T)$ is finite, then the constructions of some %degenerations and some non-degenerations give the description of all rigid algebras and irreducible components.

Let ${\bf A}(*):=\{ {\bf A}(\alpha)\}_{\alpha\in I}$ be a series of algebras, and let $\bf B$ be another algebra. Suppose that for $\alpha\in I$, $\bf A(\alpha)$ is represented by the structure $\mu(\alpha)\in\mathbb{L}(T)$ and $\bf B$ is represented by the structure $\lambda\in\mathbb{L}(T)$. Then we say that $\bf A(*)\to \bf B$ if $\lambda\in\overline{\{{\mathcal O}(\mu(\alpha))\}_{\alpha\in I}}$, and $\bf A(*)\not\to \bf B$ if $\lambda\not\in\overline{\{{\mathcal O}(\mu(\alpha))\}_{\alpha\in I}}$.

Let $\bf A(*)$, $\bf B$, $\mu(\alpha)$ ($\alpha\in I$) and $\lambda$ be as above. To prove $\bf A(*)\to \bf B$ it is enough to construct a family of pairs $(f(t), g(t))$ parametrized by $t\in\mathbb{C}^*$, where $f(t)\in I$ and $g(t)\in {\rm GL}(\mathbb V)$. Namely, let $e_1,\dots, e_n$ be a basis of $\mathbb V$ and $c_{ij}^k$ ($1\le i,j,k\le n$) be the structure constants of $\lambda$ in this basis. If we construct $a_i^j:\mathbb{C}^*\to \mathbb{C}$ ($1\le i,j\le n$) and $f: \mathbb{C}^* \to I$ such that $E_i^t=\sum\limits_{j=1}^na_i^j(t)e_j$ ($1\le i\le n$) form a basis of $\mathbb V$ for any  $t\in\mathbb{C}^*$, and the structure constants of $\mu({f(t)})$ in the basis $E_1^t,\dots, E_n^t$ are such rational functions $c_{ij}^k(t)\in\mathbb{C}[t]$ that $c_{ij}^k(0)=c_{ij}^k$, then $\bf A(*)\to \bf B$. In this case  $E_1^t,\dots, E_n^t$ and $f(t)$ are called a parametrized basis and a {\it parametrized index} for $\bf A(*)\to \bf B$, respectively.

We now explain how to prove $\bf A(*)\not\to\mathcal  \bf B$.
Note that if $\mathfrak{Der} \ \bf A(\alpha)  > \mathfrak{Der} \  \bf B$ for all $\alpha\in I$ then $\bf A(*)\not\to\bf B$.
One can also use the following  Lemma, whose proof is the same as the proof of Lemma 1.5 from \cite{GRH}.

\begin{lemma}\label{gmain}
Let $\mathfrak{B}$ be a Borel subgroup of ${\rm GL}(\mathbb V)$ and $\mathcal{R}\subset \mathbb{L}(T)$ be a $\mathfrak{B}$-stable closed subset.
If $\bf A(*) \to \bf B$ and for any $\alpha\in I$ the algebra $\bf A(\alpha)$ can be represented by a structure $\mu(\alpha)\in\mathcal{R}$, then there is $\lambda\in \mathcal{R}$ representing $\bf B$.
\end{lemma}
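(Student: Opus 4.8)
The plan is to reproduce the standard Grunewald--O'Halloran argument, whose only genuine geometric input is the completeness of the flag variety ${\rm GL}(\mathbb V)/\mathfrak{B}$. The heart of the matter is to show that the ${\rm GL}(\mathbb V)$-saturation of $\mathcal{R}$, namely
\[
{\rm GL}(\mathbb V) * \mathcal{R} := \bigcup_{g \in {\rm GL}(\mathbb V)} g * \mathcal{R},
\]
is a \emph{closed} subset of $\mathbb{L}(T)$. Once this is established, the conclusion follows almost formally, so I would isolate this closedness claim as the single substantial step.

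First I would prove that ${\rm GL}(\mathbb V) * \mathcal{R}$ is closed. Inside ${\rm GL}(\mathbb V) \times \mathbb{L}(T)$ consider the subset $\widetilde{Z} := \{ (g,\nu) : g^{-1} * \nu \in \mathcal{R}\}$, which is the preimage of the closed set $\mathcal{R}$ under the morphism $(g,\nu) \mapsto g^{-1} * \nu$ and hence is closed. Because $\mathcal{R}$ is $\mathfrak{B}$-stable, $\widetilde{Z}$ is invariant under the right translations $(g,\nu) \mapsto (gb,\nu)$ with $b \in \mathfrak{B}$, so it descends to a closed subset $Z \subseteq ({\rm GL}(\mathbb V)/\mathfrak{B}) \times \mathbb{L}(T)$ along the open surjective quotient ${\rm GL}(\mathbb V) \to {\rm GL}(\mathbb V)/\mathfrak{B}$. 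Now I project $Z$ onto the second factor. Since $\mathfrak{B}$ is a Borel subgroup, the flag variety ${\rm GL}(\mathbb V)/\mathfrak{B}$ is complete, so the projection $({\rm GL}(\mathbb V)/\mathfrak{B}) \times \mathbb{L}(T) \to \mathbb{L}(T)$ is a closed map; therefore the image of $Z$ is closed. But this image is precisely $\{\nu : g^{-1} * \nu \in \mathcal{R} \text{ for some } g\} = {\rm GL}(\mathbb V) * \mathcal{R}$. This is the step I expect to be the main obstacle, as it is the only place where the Borel hypothesis is used, entering exactly through the properness forced by completeness of the flag variety; everything else is bookkeeping.

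With closedness in hand I would finish as follows. For each $\alpha \in I$ the representative $\mu(\alpha)$ lies in $\mathcal{R}$, so the whole orbit satisfies ${\mathcal O}(\mu(\alpha)) = {\rm GL}(\mathbb V) * \mu(\alpha) \subseteq {\rm GL}(\mathbb V) * \mathcal{R}$. Taking the union over $\alpha$ and then the Zariski closure, and using that ${\rm GL}(\mathbb V) * \mathcal{R}$ is closed, gives
\[
\overline{\{{\mathcal O}(\mu(\alpha))\}_{\alpha \in I}} \subseteq {\rm GL}(\mathbb V) * \mathcal{R}.
\]
By hypothesis $\bf A(*) \to \bf B$ means the structure $\lambda$ representing $\bf B$ lies in this closure, whence $\lambda \in {\rm GL}(\mathbb V) * \mathcal{R}$. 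Thus $\lambda = g * \nu$ for some $g \in {\rm GL}(\mathbb V)$ and some $\nu \in \mathcal{R}$, and then $\nu = g^{-1} * \lambda$ lies in the same ${\rm GL}(\mathbb V)$-orbit as $\lambda$. Consequently $\nu$ is a structure in $\mathcal{R}$ representing $\bf B$, which is exactly the element asserted to exist.
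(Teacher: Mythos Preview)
Your argument is correct and is precisely the Grunewald--O'Halloran argument the paper defers to (it does not give its own proof but simply cites \cite[Lemma~1.5]{GRH}). The key step---using completeness of ${\rm GL}(\mathbb V)/\mathfrak{B}$ to prove that ${\rm GL}(\mathbb V)*\mathcal{R}$ is closed, and then trapping $\lambda$ inside it---is exactly what that reference does.
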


\noindent{\bf The geometric classification of shift associative algebras.}
The main result of the present section is Theorem \ref{geo1}.

\begin{proposition}\label{dim3geo}
The variety of complex $3$-dimensional nilpotent shift associative algebras  has 
dimension  $6$   and it has  $2$  irreducible components defined by  
\begin{center}
$\mathcal{C}_1=\overline{\mathcal{O}( \mathcal{A}_{06})}$ and
$\mathcal{C}_2=\overline{\mathcal{O}(\mathfrak{a}_{02}^{\alpha })},$ 
\end{center}
In particular, there is only one rigid algebra in this variety.
The variety of complex $3$-dimensional shift associative algebras  has 
dimension  $9$   and it has  $2$  irreducible components defined by  
\begin{center}
$\mathcal{C}_1=\overline{\mathcal{O}( \mathcal{A}_{08})}$ and
$\mathcal{C}_2=\overline{\mathcal{O}( \mathfrak{a}_{02}^{\alpha })}.$   
\end{center}
In particular, there is only one rigid algebra in this variety.
\end{proposition}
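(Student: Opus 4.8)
The plan is to reduce the geometric classification to the algebraic classification obtained in Theorem~\ref{3-dim nearly} together with its commutative-associative sublist in Lemma~\ref{3-dim asscom}. First I would list all complex $3$-dimensional (nilpotent) shift associative algebras up to isomorphism: in the nilpotent case this is $\mathcal{A}_{05}$, $\mathcal{A}_{06}$, $\mathfrak{a}_1$, $\mathfrak{a}_2^\alpha$ together with the decomposable ones ($\mathcal{A}_a\oplus\mathcal{A}_b$ with $\mathcal{A}_b^2=0$ and $\mathcal{A}_a$ from the $2$-dimensional list) and, of course, the zero algebra; in the general case one adds the non-nilpotent algebras from Lemma~\ref{3-dim asscom} ($\mathcal{A}_{07},\ldots,\mathcal{A}_{11}$ and the decomposable ones with a $1$-dimensional idempotent summand). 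For each such $\mu$ I would compute $\dim\mathfrak{Der}(\mu)$, hence $\dim\mathcal{O}(\mu)=n^2-\dim\mathfrak{Der}(\mu)=9-\dim\mathfrak{Der}(\mu)$. The variety $\mathbb{L}(T)$ of $3$-dimensional shift associative structures sits inside $\mathbb{C}^{27}$ cut out by the defining identity $(xy)z=y(zx)$; its dimension is $\max_\mu\dim\mathcal{O}(\mu)$ over the (finitely many) orbits in the top-dimensional components, which I expect to be realised by $\mathfrak{a}_2^\alpha$ (a one-parameter family, so its closure has dimension $\dim\mathcal{O}(\mathfrak{a}_2^\alpha)+1$) and by $\mathcal{A}_{06}$ in the nilpotent case, $\mathcal{A}_{08}$ in the general case.

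Next I would establish the two claimed irreducible components. The strategy is: (i) exhibit explicit parametrized bases showing that $\mathcal{A}_{06}$ (resp.\ $\mathcal{A}_{08}$) degenerates to every other algebra in the relevant variety except the members of the family $\mathfrak{a}_2^\alpha$; (ii) show that $\overline{\{\mathcal{O}(\mathfrak{a}_2^\alpha)\}_\alpha}$ contains all the remaining algebras (in particular $\mathfrak{a}_1$, $\mathcal{A}_{05}$, $\mathcal{A}_{06}$ in the nilpotent case) via a family of pairs $(f(t),g(t))$ as in the "$\mathbf{A}(\ast)\to\mathbf{B}$" setup; (iii) prove the two non-degenerations $\mathcal{A}_{06}\not\to\mathfrak{a}_2^\alpha$ and $\mathfrak{a}_2^\alpha\not\to\mathcal{A}_{06}$ (resp.\ with $\mathcal{A}_{08}$), which forces both closures to be maximal. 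For the non-degeneration $\mathfrak{a}_2^\alpha\not\to\mathcal{A}_{06}$ I would use a dimension or derivation-dimension argument together with Lemma~\ref{gmain}: find a Borel-stable closed condition (e.g.\ an upper-triangularity-type constraint on the structure constants, or the vanishing of a specific associator-type polynomial) satisfied by all $\mathfrak{a}_2^\alpha$ but not by $\mathcal{A}_{06}$. For $\mathcal{A}_{06}\not\to\mathfrak{a}_2^\alpha$ the cleanest route is to note that $\dim\overline{\mathcal{O}(\mathcal{A}_{06})}=6$ while $\dim\overline{\{\mathcal{O}(\mathfrak{a}_2^\alpha)\}_\alpha}$ is strictly larger (the parameter contributes one extra dimension), so the family cannot lie in $\overline{\mathcal{O}(\mathcal{A}_{06})}$; alternatively one checks $\mathfrak{Der}(\mathcal{A}_{06})<\mathfrak{Der}(\mathfrak{a}_2^\alpha)$ fails.

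Once these are in place, rigidity follows formally: an algebra is rigid iff its orbit closure is an irreducible component, and since $\mathcal{A}_{06}$ (resp.\ $\mathcal{A}_{08}$) has an open orbit inside one component while $\mathfrak{a}_2^\alpha$ is a one-parameter family whose generic member is not rigid, there is exactly one rigid algebra in each variety. I expect the main obstacle to be bookkeeping: verifying that $\mathcal{A}_{06}$ (resp.\ $\mathcal{A}_{08}$) indeed degenerates to \emph{all} the remaining finitely many algebras, and that the $\mathfrak{a}_2^\alpha$-closure captures the rest — this requires writing down and checking a moderate number of explicit $3\times 3$ parametrized bases $E_i^t$, and confirming in each case that the limiting structure constants are exactly those of the target. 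The derivation-dimension table does most of the pruning (we only test $\mathbf{A}\to\mathbf{B}$ when $\mathfrak{Der}(\mathbf{A})<\mathfrak{Der}(\mathbf{B})$), which keeps the number of required degenerations small; the genuinely delicate point is the pair of non-degenerations separating the two components, handled by the Borel-stability Lemma~\ref{gmain} as indicated.
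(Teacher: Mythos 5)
Your overall strategy (reduce to the algebraic classification, compare orbit dimensions, isolate maximal closures) is workable in principle, but several concrete steps are wrong and the proof would fail as written. In step (i) you claim $\mathcal{A}_{06}$ degenerates to every other nilpotent algebra outside the family $\mathfrak{a}_{2}^{\alpha}$; it cannot degenerate to $\mathfrak{a}_{1}$, since commutativity is a Zariski-closed condition preserved under degeneration and $\mathfrak{a}_{1}$ is anticommutative ($\mathfrak{a}_{1}$ in fact lies in the \emph{other} component). In step (ii) you claim $\overline{\{\mathcal{O}(\mathfrak{a}_{2}^{\alpha})\}}$ contains $\mathcal{A}_{06}$, which contradicts your own step (iii) and the statement being proved: $2$-step nilpotency ($A(AA)=(AA)A=0$) is closed under degeneration, and $\mathcal{A}_{06}$ is not $2$-step nilpotent. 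Finally, both fallback arguments you offer for the non-degenerations fail: one computes $\dim\mathfrak{Der}(\mathcal{A}_{06})=3$ and $\dim\mathfrak{Der}(\mathfrak{a}_{2}^{\alpha})=4$, so $\dim\overline{\mathcal{O}(\mathcal{A}_{06})}=6=\dim\overline{\{\mathcal{O}(\mathfrak{a}_{2}^{\alpha})\}}$ (no strict inequality in either direction), and the derivation test $\mathfrak{Der}(\mathcal{A}_{06})<\mathfrak{Der}(\mathfrak{a}_{2}^{\alpha})$ actually \emph{holds}, so it does not obstruct $\mathcal{A}_{06}\to\mathfrak{a}_{2}^{\alpha}$. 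The correct obstructions are exactly the two closed conditions above: commutativity blocks $\mathcal{A}_{06},\mathcal{A}_{08}\to\mathfrak{a}_{2}^{\alpha}$, and $2$-step nilpotency (resp.\ nilpotency) blocks $\mathfrak{a}_{2}^{\alpha}\to\mathcal{A}_{06}$ (resp.\ $\mathcal{A}_{08}$).

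You also miss the simplification that makes the paper's proof essentially computation-free: by Theorem \ref{3-dim nearly}, every $3$-dimensional shift associative algebra is either commutative associative or $2$-step nilpotent, so the variety is the union of two subvarieties already known to be irreducible with known generic orbits --- the $3$-dimensional (nilpotent) commutative associative algebras, with generic orbit $\mathcal{O}(\mathcal{A}_{06})$ (resp.\ $\mathcal{O}(\mathcal{A}_{08})$), and the $3$-dimensional $2$-step nilpotent algebras, with dense family $\{\mathcal{O}(\mathfrak{a}_{2}^{\alpha})\}$. No explicit parametrized bases are needed; one only checks that neither irreducible piece is contained in the other, which follows from the commutativity/non-commutativity and nilpotency observations above together with the dimension count.
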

\begin{proof}
Thanks to Theorem \ref{3-dim nearly},
each $3$-dimensional shift associative algebra is 
 commutative associative or $2$-step nilpotent.
The variety of $3$-dimensional nilpotent commutative associative algebras is irreducible and defined by $\mathcal{O}( \mathcal{A}_{06})$ \cite{fkkv};
  the variety of $3$-dimensional commutative associative algebras is irreducible and defined by $\mathcal{O}( \mathcal{A}_{08})$ \cite{gkp};
 the variety of $3$-dimensional $2$-step nilpotent algebras  is irreducible and defined by ${\mathcal{O}(\mathfrak{a}_{02}^{\alpha })}$ \cite{kppv}.
 Due to the dimension of the present components and commutativity/noncommutativity, we have the proposition statement.
 \end{proof}

\begin{theorem}\label{geo1}
The variety of complex $4$-dimensional nilpotent shift associative algebras  has 
dimension  $12$   and it has  $4$  irreducible components defined by  
\begin{center}
$\mathcal{C}_1=\overline{\mathcal{O}( \mathcal{A}_{16})},$ \
$\mathcal{C}_2=\overline{\mathcal{O}(\mathfrak{a}_{06}^{\alpha })},$ 
$\mathcal{C}_3=\overline{\mathcal{O}(\mathfrak{a}_{10}^{\alpha })}$ \  and
$\mathcal{C}_4=\overline{\mathcal{O}( \mathfrak{a}_{13}^{\alpha })}.$   
\end{center}
In particular, there is only one rigid algebra in this variety.
The variety of complex $4$-dimensional shift associative algebras  has 
dimension  $16$   and it has  $3$  irreducible components defined by  
\begin{center}
$\mathcal{C}_1=\overline{\mathcal{O}( \mathcal{A}_{17})},$ \
$\mathcal{C}_2=\overline{\mathcal{O}(\mathfrak{a}_{10}^{\alpha })}$ and
$\mathcal{C}_3=\overline{\mathcal{O}( \mathfrak{a}_{12}^{\alpha })}.$   
\end{center}
In particular, there is only one rigid algebra in this variety.
 
\end{theorem}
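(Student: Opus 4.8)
The plan is to run the degeneration machinery of \cite{GRH} recalled above, taking as input the algebraic classification of $4$-dimensional shift associative algebras established earlier (together with the classification of complex $4$-dimensional commutative associative algebras), and exploiting the known geometric classifications of the subvarieties that essentially cover $\mathbb{L}(T)$: the variety of $4$-dimensional commutative associative algebras (its nilpotent part \cite{fkkv}, its full version \cite{gkp}) and the variety of $4$-dimensional $2$-step nilpotent algebras \cite{kppv}. This is the same strategy used in dimension $3$ in Proposition \ref{dim3geo}, and it is available because, by Corollary \ref{4dim} and inspection of the list, almost every $4$-dimensional shift associative algebra is commutative associative or $2$-step nilpotent (possibly after splitting off a unital $\mathbb{F}$ summand), the only genuine exceptions being the $3$-step nilpotent algebras $\mathfrak{a}_{13},\mathfrak{a}_{14}$.

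First I would compute $\dim\mathfrak{Der}(\mathbf{A})$ for every algebra in the list and for every member of each one-parameter family, so that $\dim\mathcal{O}(\mathbf{A})=16-\dim\mathfrak{Der}(\mathbf{A})$ and, for a family, $\dim\overline{\bigcup_{\alpha}\mathcal{O}(\mathbf{A}(\alpha))}=1+\max_{\alpha}\dim\mathcal{O}(\mathbf{A}(\alpha))$. Comparing these numbers isolates the orbit closures of maximal dimension, which will turn out to be exactly the candidate components $\mathcal{C}_1,\dots,\mathcal{C}_4$ (resp.\ $\mathcal{C}_1,\mathcal{C}_2,\mathcal{C}_3$) of the statement, and shows that the dimension of the whole variety is $\max_i\dim\mathcal{C}_i$, namely $12$ (resp.\ $16$). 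Since $\mathcal{A}_{16}$ (resp.\ $\mathcal{A}_{17}$) is rigid, $\mathcal{C}_1$ is the closure of that single orbit, while the other $\mathcal{C}_i$ are closures of single one-parameter families of pairwise non-isomorphic algebras, hence irreducible; this accounts for there being exactly one rigid algebra.

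Next I would check that $\mathbb{L}(T)=\bigcup_i\mathcal{C}_i$ by producing, for every algebra $\mathbf{B}$ in the list that is not one of the generic ones, an explicit degeneration $\mathbf{A}\to\mathbf{B}$ (or $\mathbf{A}(\ast)\to\mathbf{B}$) from one of the candidates, each given by a parametrized basis $E_1^t,\dots,E_4^t$ (and, for families, a parametrized index) and verified in the compact notation $A_i=\langle e_i,\dots,e_4\rangle$, $A_pA_q\subseteq A_r$; here the degenerations already recorded inside the commutative associative and $2$-step nilpotent subvarieties can be reused wholesale. Then I would prove the candidates are pairwise incomparable: for most pairs the necessary condition $\dim\mathfrak{Der}(\mathbf{A})<\dim\mathfrak{Der}(\mathbf{B})$ already forbids $\mathbf{A}\to\mathbf{B}$, and for the remaining pairs I would apply Lemma \ref{gmain} with a $\mathfrak{B}$-stable Zariski-closed condition separating them --- for instance a bound on $\dim\mathcal{A}^2$, on the dimension of the annihilator, on the nilpotency class, commutativity of $\mathcal{A}$ or of $\mathcal{A}^2$, being a nilalgebra, or the vanishing of one structure equation such as $A_2A_2\subseteq A_4$ --- that holds identically along the family defining $\mathcal{C}_i$ but fails at a suitable representative of $\mathbf{B}$. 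Together these steps show that $\mathcal{C}_1,\dots,\mathcal{C}_4$ (resp.\ $\mathcal{C}_1,\mathcal{C}_2,\mathcal{C}_3$) are exactly the maximal irreducible closed subsets of $\mathbb{L}(T)$, i.e.\ its irreducible components, which is the assertion of the theorem.

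The main obstacle will be the two-sided bookkeeping: constructing and checking a correct degeneration into the right component for each of the many algebras in the $4$-dimensional list (the commutative associative algebras through $\mathcal{A}_{29}$ and the decomposable $\mathcal{A}_a\oplus\mathcal{A}_b$, together with $\mathfrak{a}_{01}$--$\mathfrak{a}_{14}$ and their parameters), and, conversely, finding for every pair of candidate components a genuinely Borel-stable invariant that separates them. The latter is the delicate point, because the one-parameter families can specialize or collapse at distinguished parameter values, so a crude derivation-dimension count does not always suffice and Lemma \ref{gmain} must be applied with care; the genuinely $3$-step nilpotent algebras $\mathfrak{a}_{13},\mathfrak{a}_{14}$ require particular attention, since nilpotency class is a degeneration invariant and hence rules out their appearing as limits of $2$-step members. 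The irreducibility of each family's orbit closure and the final dimension count are then routine.
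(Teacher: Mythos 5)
Your proposal follows essentially the same route as the paper: reuse the known geometric classifications of the commutative associative and $2$-step nilpotent subvarieties (\cite{fkkv,gkp,KM14,kppv}), compare orbit-closure dimensions, supply explicit parametrized degenerations for the remaining algebras (in particular for $\mathfrak{a}_{13},\mathfrak{a}_{14}$ and for $\mathfrak{a}_{12}^{\alpha}$ onto the other candidates in the non-nilpotent case), and separate the surviving candidates by a Borel-stable closed condition via Lemma \ref{gmain}, the paper's one essential non-degeneration being $\mathfrak{a}_{12}^{\alpha}\not\to\mathfrak{a}_{10}^{\beta}$. The only imprecision is calling the nilpotency class a degeneration ``invariant'' --- it can drop under degeneration --- but since $2$-step nilpotency is a Zariski-closed condition, your conclusion that $\mathfrak{a}_{13}$ cannot lie in the closure of the $2$-step nilpotent components is correct.
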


\begin{proof}
After carefully  checking  the dimensions of orbit closures of the more important for us algebras, we have 

\begin{center}
      
$\dim  \mathcal{O}(\mathcal{A}_{17})= 16,$ \ 
$\dim \mathcal{O}(\mathfrak{a}_{12}^{\alpha })= 13,$ \ 
$\dim  \mathcal{O}(\mathcal{A}_{16})=  
\dim \mathcal{O}(\mathfrak{a}_{06}^{\alpha })=
\dim \mathcal{O}(\mathfrak{a}_{10}^{\alpha })=
\dim \mathcal{O}(\mathfrak{a}_{13}^{\alpha })=12.$

\end{center}

Thanks to \cite{fkkv}, 
each $4$-dimensional   nilpotent commutative associative algebra is in $\overline{\mathcal{O}(\mathcal{A}_{16})}.$ 
Thanks to \cite{kppv}, 
each $4$-dimensional $2$-step nilpotent algebra is in $\mathcal{O}(\mathfrak{a}_{06}^{\alpha })$ or $\mathcal{O}(\mathfrak{a}_{10}^{\alpha }).$ To obtain the geometric classification of $4$-dimensional nilpotent 
shift associative algebras, we need the following degeneration
    ${\mathfrak a}_{13}  \xrightarrow{ (t e_1; \  t^2e_2; \  t^{3}e_4; \   t^2 e_4)} {\mathfrak a}_{14}.$

\ 

Now, we consider all $4$-dimensional shift associative algebras. 
Thanks to \cite{KM14}, 
each $4$-dimensional commutative associative algebra is in $\overline{\mathcal{O}( \mathcal{A}_{17})}.$
Below, we present the rest of the necessary degenerations: 
\begin{center}
    
 ${\mathfrak a}_{12}^{\alpha -t^2 -\alpha t^2} \xrightarrow{ \big(t^2(t^2-\alpha)(t^2 +\alpha (t^2-1)) e_1 + (t^6-\alpha t^4) e_2+t^2 e_4; \  (t^3-\alpha t) e_2 +t^2 e_4; \  t^2 e_4; \ (t^3-\alpha t)^3 (t^2+\alpha(t^2-1))e_3-t  e_4 \big)} {\mathfrak a}_{06}^{\alpha},$ \ 

 ${\mathfrak a}_{12}^{0} \xrightarrow{ (t e_1; \  e_2; \  t e_3; \  e_4)} {\mathfrak a}_{11},$ \ 
%${\mathfrak a}_{12}^{0} \xrightarrow{ (e_1+ te_4; \  -te_2; \  t^{3}e_4; \  te_3 +t^2 e_4)} {\mathfrak a}_{16},$ \   
    ${\mathfrak a}_{12}^{-\frac{1}{t}} \xrightarrow{ (e_1+te_2+te_4; \  (t-1)te_2+t^2e_4; \  t^{3}e_4; \  (1-t)e_3 +t^2 e_4)} {\mathfrak a}_{13}.$ \  
\end{center}

The reasons for the main non-degeneration 
 ${\mathfrak a}_{12}^{\alpha} \not\to  {\mathfrak a}_{10}^\beta$   
 are given below:

$\mathcal R=\left\{  
\begin{array}{llll}
A_1^2 \subseteq A_3,\\
(c_{12}^4)^2 (c_{21}^3)^2+ (c_{12}^3)^2 (c_{21}^4)^2 
 - 
   c_{11}^3  c_{22}^3\big( (c_{12}^4)^2 + (c_{21}^4)^2\big)  
  +\\ 
   2 c_{12}^4 c_{21}^4(c_{11}^3  c_{22}^3 -   c_{12}^3  c_{21}^3 )  +  
   c_{11}^3  c_{22}^4 (c_{12}^3 c_{12}^4  -   c_{12}^4 c_{21}^3)  +
  c_{11}^3  c_{21}^4 c_{22}^4  (c_{21}^3-c_{12}^3) 
   
   =\\  
 \multicolumn{1}{r}{= \ c_{11}^4 (c_{12}^3 - c_{21}^3) \big( c_{22}^3 (c_{21}^4-c_{12}^4) +  c_{22}^4(c_{12}^3 - c_{21}^3)\big)}

 \end{array}
 \right\}$

\end{proof}

Several conjectures state that nilpotent Lie algebras form a very small subvariety in the variety of Lie algebras. Grunewald and O'Halloran conjectured in \cite{GRH3} that for any $n$-dimensional nilpotent Lie algebra $\mathcal A$ there exists an $n$-dimensional non-nilpotent Lie algebra $\mathcal B$ such that $\mathcal B\to \mathcal A$. At the same time,
Vergne conjectured in \cite{V70} that a nilpotent Lie algebra cannot be rigid in the variety of all Lie algebras. Analogous assertions can be conjectured for other varieties.
We will say that the variety $\mathfrak{A}$ of algebras has {\it Grunewald--O'Halloran Property} if for any nilpotent algebra $\mathcal A\in\mathfrak{A}$ there is a non-nilpotent algebra $\mathcal B\in\mathfrak{A}$ such that $\mathcal B\to \mathcal  A$.
We will say that $\mathfrak{A}$ has {\it Vergne Property} if there are no nilpotent rigid algebras in $\mathfrak{A}$.
We will also say that $\mathfrak{A}$ has {\it Vergne--Grunewald--O'Halloran Property} if any irreducible component of $\mathfrak{A}$ contains a non-nilpotent algebra.
Grunewald--O'Halloran Property was proved for four-dimensional Jordan and  Lie algebras in \cite{KM14,BC99}.
It is clear that Vergne--Grunewald--O'Halloran Property follows from Grunewald--O'Halloran Property and Vergne Property follows from Vergne--Grunewald--O'Halloran Property.
It was proven that the variety of complex $4$-dimensional (one-sided) Leibniz algebras has Vergne--Grunewald--O'Halloran Property and on the other hand, it does not have Grunewald--O'Halloran Property \cite{ikv17}.
At the same time, 
the variety of symmetric Leibniz algebras has no Vergne--Grunewald--O'Halloran Property; however, it has  Vergne Property \cite{ale}.
As a corollary of the Theorem \ref{geo1}, we have

\begin{corollary}
The variety of complex $4$-dimensional shift associative algebras has no Vergne--Grunewald--O'Halloran Property. However, it has  Vergne Property.
\end{corollary}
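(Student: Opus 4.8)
The plan is to read both assertions directly off the list of irreducible components produced in Theorem \ref{geo1}. Write $\mathbb{L}$ for the variety of complex $4$-dimensional shift associative algebras. By Theorem \ref{geo1}, $\mathbb{L}$ has exactly three irreducible components,
\[
\mathcal{C}_1=\overline{\mathcal{O}(\mathcal{A}_{17})},\qquad
\mathcal{C}_2=\overline{\mathcal{O}(\mathfrak{a}_{10}^{\alpha})},\qquad
\mathcal{C}_3=\overline{\mathcal{O}(\mathfrak{a}_{12}^{\alpha})},
\]
and it contains a unique rigid algebra. Everything below is a matter of extracting the two properties from this structure.

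\textbf{Failure of the Vergne--Grunewald--O'Halloran property.} I would show that the component $\mathcal{C}_2$ contains no non-nilpotent algebra. The algebra $\mathfrak{a}_{10}^{\alpha}$ is $2$-step nilpotent: its table gives $\mathcal{A}^{2}=\langle e_3,e_4\rangle$ and $\mathcal{A}\,\mathcal{A}^{2}=\mathcal{A}^{2}\,\mathcal{A}=0$, so it satisfies the polynomial identities $(xy)z=0$ and $x(yz)=0$. The locus in $\mathbb{L}$ cut out by these identities is Zariski-closed and contains $\mathfrak{a}_{10}^{\alpha}$ for every $\alpha$, hence it contains the closure $\mathcal{C}_2=\overline{\{\mathcal{O}(\mathfrak{a}_{10}^{\alpha})\}_{\alpha}}$. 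Therefore every algebra in $\mathcal{C}_2$ is $2$-step nilpotent, in particular nilpotent, and so a whole irreducible component of $\mathbb{L}$ consists entirely of nilpotent algebras. By definition this means $\mathbb{L}$ has no Vergne--Grunewald--O'Halloran property. (This is coherent with Theorem \ref{geo1}: $\mathcal{C}_2$ is literally the component $\overline{\mathcal{O}(\mathfrak{a}_{10}^{\alpha})}$ of the nilpotent subvariety listed there.)

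\textbf{Vergne property.} By Theorem \ref{geo1} the unique rigid algebra of $\mathbb{L}$ is $\mathcal{A}_{17}$: indeed $\mathcal{C}_1=\overline{\mathcal{O}(\mathcal{A}_{17})}$ is a component that is the orbit closure of a single algebra, while $\mathcal{C}_2$ and $\mathcal{C}_3$ are closures of genuine one-parameter families of pairwise non-isomorphic orbits and hence are not orbit closures of single algebras, so they contain no rigid algebra. Now $\mathcal{A}_{17}$ is the semisimple commutative associative algebra $\bigoplus_{i=1}^{4}\mathbb{C}e_i$ with $e_i\circ e_i=e_i$, which is not nilpotent. Consequently $\mathbb{L}$ has no nilpotent rigid algebra, i.e.\ it has the Vergne property.

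The one step that needs a moment's attention is the inheritance of nilpotency by the \emph{whole} component $\mathcal{C}_2$: this is legitimate only because we invoke $2$-step nilpotency, i.e.\ nilpotency with a \emph{fixed} bound on the step, which is a closed condition; bare nilpotency with no uniform bound is not closed in general. We are fortunate that the generating family $\mathfrak{a}_{10}^{\alpha}$ is uniformly $2$-step nilpotent, which is exactly why this component already appears among the irreducible components of the nilpotent subvariety in Theorem \ref{geo1}. Apart from this observation, the corollary is a direct reading of Theorem \ref{geo1}.
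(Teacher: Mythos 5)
Your proposal is correct and follows exactly the route the paper intends: the corollary is stated as an immediate consequence of Theorem \ref{geo1}, read off by noting that the component $\overline{\mathcal{O}(\mathfrak{a}_{10}^{\alpha})}$ consists entirely of ($2$-step) nilpotent algebras while the unique rigid algebra $\mathcal{A}_{17}$ is non-nilpotent. Your extra remark that $2$-step nilpotency is the correct closed condition to propagate over the whole component is a worthwhile clarification, but it does not change the argument.
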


 \begin{proposition}\label{rigidalg}
 Let $\mathcal A$ be  $n$-dimensional semisimple commutative associative algebra.
 Hence, $\mathcal A$ is rigid in the variety of $n$-dimensional shift associative algebras.
 \end{proposition}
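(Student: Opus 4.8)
The plan is to show that the orbit $\mathcal{O}(\mu)$ of an $n$-dimensional semisimple commutative associative algebra $\mathcal{A}$ is open in the variety $\mathbb{L}(\SAs)$ of $n$-dimensional shift associative algebras, which by the discussion preceding the proposition is equivalent to rigidity. The most economical route is a dimension count combined with the structural decomposition from Theorem~\ref{sum}. First I would recall that an $n$-dimensional semisimple commutative associative algebra over $\mathbb{F}$ is (up to isomorphism) the unique algebra $\mathbb{F}^n = \bigoplus_{i=1}^n \mathbb{F}e_i$ with $e_i e_i = e_i$ and $e_i e_j = 0$ for $i \neq j$; in particular such an $\mathcal{A}$ is rigid already inside the variety of commutative associative algebras (this is classical, and for the relevant low dimensions it is exactly the algebra $\mathcal{A}_{17}$ appearing in Theorem~\ref{geo1}). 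So $\dim \mathcal{O}(\mu)$ equals $n^2 - \dim \mathfrak{Der}(\mathcal{A})$, and since $\mathcal{A} = \mathbb{F}^n$ has only the zero derivation, $\dim \mathcal{O}(\mu) = n^2$.

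Next I would argue that $\mathbb{L}(\SAs)$ has dimension at most $n^2$ in a neighbourhood of $\mu$, hence $\mathcal{O}(\mu)$ is a full-dimensional (open) orbit in the unique irreducible component through $\mu$. Concretely, suppose $\mathcal{B} \to \mathcal{A}$ with $\mathcal{B} \not\cong \mathcal{A}$; I must derive a contradiction. By Theorem~\ref{sum}, write $\mathcal{B} = \mathcal{S}_{\mathcal{B}} \oplus \mathcal{R}ad(\mathcal{B})$ with $\mathcal{S}_{\mathcal{B}}$ semisimple commutative associative. Degenerations cannot increase the dimension of $\mathcal{A}^2$: since $\mathcal{A}^2 = \mathcal{A}$ has dimension $n$, we need $\dim \mathcal{B}^2 \geq n$, forcing $\mathcal{B}^2 = \mathcal{B}$ and hence $\mathcal{R}ad(\mathcal{B}) = 0$ (a nonzero nilpotent ideal would shrink $\mathcal{B}^2$). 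Therefore $\mathcal{B}$ is itself semisimple commutative associative, so $\mathcal{B} \cong \mathbb{F}^n \cong \mathcal{A}$, contradicting $\mathcal{B} \not\cong \mathcal{A}$. The same argument handles a parametrized family $\mathcal{B}(*) \to \mathcal{A}$: every algebra in the closure of $\bigcup_\alpha \mathcal{O}(\mu(\alpha))$ with $\mathcal{A}$ in its closure must have $(n\text{-dimensional})$ square, hence be semisimple, hence isomorphic to $\mathcal{A}$, so no proper degeneration onto $\mathcal{A}$ exists.

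Finally I would assemble these observations: $\mathcal{A}$ admits no proper degeneration from any shift associative algebra, so $\overline{\mathcal{O}(\mu)}$ contains no orbit other than $\mathcal{O}(\mu)$ itself, i.e.\ $\overline{\mathcal{O}(\mu)} = \mathcal{O}(\mu)$ is closed; being also of dimension $n^2$ and locally closed, and since it is not properly contained in the closure of any other orbit, it is an irreducible component of $\mathbb{L}(\SAs)$, and therefore $\mathcal{A}$ is rigid. I expect the main obstacle to be the semicontinuity step — rigorously justifying that $\dim \mathcal{B}^2 \geq \dim \mathcal{A}^2$ under degeneration (the function $\mu \mapsto \dim(\text{image of }\mu)$ is lower semicontinuous on $\mathbb{L}(\SAs)$, which is standard but should be cited), and checking that no exotic $n$-dimensional semisimple shift associative algebra exists beyond $\mathbb{F}^n$. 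The latter is closed off by Theorem~\ref{sum} together with Lemma~\ref{unital} and Proposition~\ref{simple=K}, which together force a semisimple shift associative algebra to be commutative associative and hence a product of copies of $\mathbb{F}$.
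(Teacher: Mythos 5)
Your overall strategy is the paper's: show that any $\mathcal{B}$ (or family $\mathcal{B}(*)$) degenerating to $\mathcal{A}$ must itself be semisimple, then invoke Theorem~\ref{sum} to conclude $\mathcal{B}$ is commutative associative and hence isomorphic to $\mathbb{F}^n\cong\mathcal{A}$. The closing identification of semisimple shift associative algebras with $\mathbb{F}^n$ via Theorem~\ref{sum}, Lemma~\ref{unital} and Proposition~\ref{simple=K} is correct and is exactly what the paper does.

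However, there is a genuine gap in the step where you establish semisimplicity of $\mathcal{B}$. You argue: $\dim\mathcal{B}^2\geq\dim\mathcal{A}^2=n$ forces $\mathcal{B}^2=\mathcal{B}$, ``and hence $\mathcal{R}ad(\mathcal{B})=0$ (a nonzero nilpotent ideal would shrink $\mathcal{B}^2$).'' That implication is false. Any unital non-semisimple commutative associative algebra is a counterexample: for $\mathcal{B}=\mathbb{F}[t]/(t^2)\oplus\mathbb{F}^{n-2}$ (for $n=2$ this is the algebra $\mathcal{A}_{02}$ of Lemma~\ref{2-dim asscom}) one has $\mathcal{B}^2=\mathcal{B}$ because $1\cdot 1=1$ and $1\cdot t=t$, yet $\mathcal{R}ad(\mathcal{B})=(t)\neq 0$ and $\mathcal{B}\not\cong\mathbb{F}^n$. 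So the invariant $\dim\mathcal{B}^2$ cannot separate $\mathbb{F}^n$ from such algebras, and your argument as written does not exclude a degeneration from them (it is of course the reverse degeneration $\mathbb{F}^n\to\mathcal{B}$ that actually holds, but nothing in your proof shows this). The repair is to replace semicontinuity of $\dim(\cdot)^2$ by upper semicontinuity of $\dim\mathcal{R}ad(\cdot)$: the set of structures admitting a nilpotent ideal of dimension at least $k$ is Zariski-closed (it is the image under the proper projection from $\mathbb{L}(T)\times \mathrm{Gr}(k,n)$ of a closed set), so $\mathcal{A}\in\overline{\mathcal{O}(\mu(\mathcal{B}))}$ and $\mathcal{R}ad(\mathcal{A})=0$ force $\mathcal{R}ad(\mathcal{B})=0$. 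This is precisely the paper's one-line argument (``$\mathcal{B}(\omega)$ has to be semisimple''), after which Theorem~\ref{sum} finishes the proof as you describe. The initial orbit-dimension computation $\dim\mathcal{O}(\mu)=n^2$ is correct but not needed once the non-degeneration statement is in place.
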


\begin{proof}
    If $\mathcal A \in \overline{\mathcal{O}(\mathcal{B}(\omega))},$ then $\mathcal{B}(\omega)$ have to be semisimple.
    Thanks to Theorem \ref{sum},
    $\mathcal{B}(\omega)$ is  
     commutative associative and have to be isomorphic to  $\mathcal A.$
\end{proof}

Theorem \ref{geo1}, Propositions \ref{dim3geo} and \ref{rigidalg}  motivates the following open question.

\ 

\noindent{\bf Open question.}
Is there a rigid non-associative shift associative algebra?

\end{document}